\theoremstyle{plain}
\newtheorem{theorem}{Theorem}[section]
\newtheorem{lemma}{Lemma}[section]
\newtheorem{remark}{Remark}[section]
\newtheorem{proposition}{Proposition}[section]
\newif \ifLastSection \LastSectionfalse
\numberwithin{equation}{section}
\begin{document}

\title{{\bf {Long-time behavior of solutions to the $M_1$ model with boundray effect}}}

\author[1]{Nangao Zhang}
\author[2]{Changjiang Zhu\thanks{Corresponding author. \authorcr Email addresses: mazhangngmath@mail.scut.edu.cn
 (zhang), machjzhu@scut.edu.cn (zhu)
.}}
\affil[1,2]{ \normalsize  School of Mathematics, South China University of Technology, Guangzhou 510641, P.R. China}

\date{}

\maketitle

\textbf{{\bf Abstract:}} In this paper, we are concerned with the asymptotic behavior of solutions of $M_1$ model on quadrant $(x,t) \in \mathbb{R}^{+} \times \mathbb{R}^{+}$.
From this model, combined with damped compressible Euler equations, a more general system is introduced.
We show that the solutions to the initial boundary value problem of this system globally exist and tend time-asymptotically to the corresponding nonlinear parabolic equation governed by the related Darcy's law.
Compared with previous results on compressible Euler equations with damping obtained by Nishihara and Yang in \cite{Nishihara-Yang1999}, and Marcati, Mei and Rubino in \cite{Marcati-Mei-Rubino2005}, the better convergence rates are obtained. The approach adopted is based on the technical time-weighted energy estimates together with the Green's function method.

\bigbreak \textbf{{\bf Key Words}:} $M_1$ model, nonlinear parabolic equation, time-weighted energy estimates, asymptotic behavior.

\bigbreak \textbf{{\bf AMS Subject Classification}:} 85A25, 35L65, 35B40.

\section{Introduction}\label{S1}

Radiation transport is involved in many fields, such as climate, medicine and so on. In recent years, a great effort has been made to develop approximate models that solve the radiative transfer equations at a low cost. In this context, the $M_{1}$ model is an interesting choice (cf. \cite{Berthon-Charrier2003, Berthon-Dubois2010}). Here we just consider the scattering part and we omit the role played by the temperature, then the corresponding simplified model reads
(cf. \cite{Berthon-Charrier-Dubroca2007, Goudon-Lin2013}):
\begin{equation}\label{1.1a}
\left\{\begin{array}{l}
\partial_{t}\rho+c\nabla \cdot (\rho u)=0,\\[2mm]
  \partial_{t}(\rho u)+ c\nabla \cdot P(\rho, u)=-c\sigma\rho u.
 \end{array}
        \right.
\end{equation}
Here, the unknown function $\rho=\rho(x,t) \geq 0$ and $u=u(x,t)  \in \mathbb{R}^{n} ~ (1\leq n\leq3)$ denote respectively the radiative energy and normalized radiative flux. The positive constants $c$ and $\sigma$ denote the speed of the light and the opacity.
Concerning the radiative pressure $P(\rho, u)$, it is given by
\begin{equation}\label{1.2a}
P(\rho, u)=\frac{1}{2}\left((1-\chi(u)) \mathbb{I}_{n}+(3 \chi(u)-1) \frac{u \otimes u}{|u|^{2}}\right) \rho,
\end{equation}
with
\begin{equation}\label{1.3a}
  \chi(u)=\frac{3+4 |u|^{2}}{5+2 \sqrt{4-3|u|^{2}}},
\end{equation}
 where $\mathbb{I}_{n}$ is the identity matrix of order $n$ and $\left|u\right| \leq 1$.

In this paper, we shall restrict ourselves to the one-dimensional case. We set $c=1$ without loss of generality, then \eqref{1.1a} can be rewritten as
\begin{equation}\label{1.4a}
\left\{\begin{array}{l}
 \rho_{t} +\left(\rho u \right)_{x}=0,\\[2mm]
\displaystyle  \left(\rho u \right)_{t}+\left(\frac{\rho}{3}\right)_{x}+\left(\frac{2\rho u^{2}}{2+\sqrt{4-3u^{2}}} \right)_{x}=-\sigma \rho u.
 \end{array}
        \right.
\end{equation}
We are interested in the large time behavior of solutions to \eqref{1.4a} on quadrant. Suppose that $\rho \geq C > 0$, then it is more convenient to use the Lagrangian coordinates to explore this system. We consider the coordinate transformation as follows:
\begin{equation}\notag
 x \Rightarrow \int_{(0,0)}^{(x, t)} \rho(y, s) \mathrm{d} y-(\rho u)(y, s) \mathrm{d} s, \quad t \Rightarrow \tau ,
\end{equation}
 and we still denote the Lagrangian coordinates by $(x,t)$ for simplicity.

 Let $v=\frac{1}{\rho}$, then the system \eqref{1.4a} can be transformed as the following form
\begin{equation}\label{1.5a}
 \left\{\begin{array}{l}
 v_{t}-u_{x}=0,\\[2mm]
\displaystyle  u_{t}+\left(\frac{1}{3v} \right)_{x}-\left(\frac{u^{2}\sqrt{4-3u^{2}}}{v(2+\sqrt{4-3u^{2}})} \right)_{x}=- \sigma u, \quad (x,t) \in \mathbb{R}^{+} \times \mathbb{R}^{+},
 \end{array}
        \right.
\end{equation}
with the following initial data
\begin{equation}\label{1.6a}
 (v,u)|_{t=0}=(v_0,u_0)(x) \rightarrow (v_{+}, u_{+}),  \quad v_{+}>0£¬ \quad \mbox{as} \quad x \rightarrow \infty ,
\end{equation}
and the Dirichlet boundary condition
\begin{equation}\label{1.7a}
 u(0,t)=0.	
 \end{equation}

Now let's briefly review the previous work on $M_{1}$ model and introduce the main concerns of this paper. For the Cauchy problem, the global existence of smooth solutions with small initial data has been studied in \cite{Goudon-Lin2013}, Zhang and Zhu in \cite{Zhang-Zhu2021} showed that the solution of (1.5) tended time-asymptotically to the nonlinear diffusion waves, and obtained its optimal convergence rate.
However, the large time behavior of solutions to $M_{1}$ model on quadrant is never dealt with as we know.
Therefore, the target in this paper is to study the large time behaviors of smooth solutions to \eqref{1.5a}--\eqref{1.7a}.
It is noted that in physics, the damping effects make that the dynamical system possesses diffusion phenomena like nonlinear diffusion equations, so we are mainly focus on the nonlinear diffusive phenomena of this system.

Noting the relationship between \eqref{1.5a} and the compressible Euler equations with damping, we hope to obtain more general results involving the two systems. So in this paper, we tend to consider the following more general system
\begin{equation}\label{1.1}
 \left\{\begin{array}{l}
 v_{t}-u_{x}=0,\\[2mm]
  u_{t}+p(v)_{x}=-\alpha u+(g(u)f(v))_{x}, \quad (x,t) \in \mathbb{R}^{+} \times \mathbb{R}^{+},
 \end{array}
        \right.
\end{equation}
with initial data
\begin{equation}\label{1.2}
  (v,u)|_{t=0}=(v_0,u_0)(x) \rightarrow (v_{+}, u_{+}), \quad \mbox{as} \quad x \rightarrow \infty ,
\end{equation}
and with the Dirichlet boundary condition
\begin{equation}\label{1.3}
u|_{x=0}=0.	
\end{equation}
 Here $u=u(x,t) ~\mbox{and}~ v=v(x,t)>0:\mathbb{R}^{+} \times \mathbb{R}^{+} \rightarrow \mathbb{R}$ are unknown, $p$ is a smooth function of $v$ with $p>0$, $g$ and $f$ are smooth function of $u$ and $v$, respectively, physical coefficients $\alpha>0$. $v_{+}>0$ and $u_{+}$ are constants.

 Let's review some relevant mathematical investigations. When $(g(u)f(v))_{x} \equiv 0$, the system \eqref{1.1}  reduces to compressible Euler equations with linear damping
\begin{equation}\label{1.4}
 \left\{\begin{array}{l}
 v_{t}-u_{x}=0,\\[2mm]
  u_{t}+p(v)_{x}=-\alpha u.
 \end{array}
        \right.
\end{equation}
For the Cauchy problem to \eqref{1.4}, there is a huge literature on the investigations of global existence and large time behaviors of smooth solutions (see \cite{Nishihara1997, Wang-Yang2001, Zhao2000} and references therein).
Hsiao and Liu in \cite{Hsiao-Liu1992} firstly obtained the solutions $(v,u)(x,t)$ to the corresponding Cauchy problem of \eqref{1.4} tend time-asymptotically to the nonlinear self-similar diffusion wave solutions $(\bar{\bar{v}},\bar{\bar{u}})(x,t)$ of the porous media equations
\begin{equation}\notag
 \left\{\begin{array}{l}
\bar{\bar{v}}_{t}-\bar{\bar{u}}_{x}=0,\\[2mm]
p(\bar{\bar{v}})_{x}=- \alpha\bar{\bar{u}},\quad (x,t) \in \mathbb{R} \times \mathbb{R}^{+},
 \end{array}
        \right.
\end{equation}
in the sense $\left\|(v-\bar{\bar{v}}, u-\bar{\bar{u}})(t) \right\|_{L^{\infty}} \leq C(t^{-\frac{1}{2}},t^{-\frac{1}{2}})$.
Then, by taking more detailed energy estimates, Nishihara in \cite{Nishihara1996} successfully improved the convergence rates
as $\left\|(v-\bar{\bar{v}}, u-\bar{\bar{u}})(t) \right\|_{L^{\infty}} \leq C(t^{-\frac{3}{4}},t^{-\frac{5}{4}})$.
Furthermore, by constructing an approximate Green function with the energy method together, Nishihara, Wang and Yang in \cite{Nishihara-Wang-Yang2000} completely improved the convergence rates as $\left\|(v-\bar{\bar{v}}, u-\bar{\bar{u}})(t) \right\|_{L^{\infty}} \leq C(t^{-1},t^{-\frac{3}{2}})$, which is optimal in the sense comparing with the heat equation.
These conclusions require that both the initial disturbance and the wave strength around a particular diffusion wave are suitably small, some of these restrictions were later partially relaxed by Zhao in \cite{Zhao2001}.
For the Cauchy problem to \eqref{1.4} with nonlinear damping or vacuum, and so on, we can refer to these interesting works \cite{Hsiao-Liu1993, Hsiao-Luo1996, Huang-Pan2003, Huang-Pan2006, Mei2009,Mei2010, Nishihara2003, Zhu-Jiang2006} and references therein.

 For the initial-boundary value problem (IBVP) on the quarter plane $\mathbb{R}^{+} \times \mathbb{R}^{+}$ to \eqref{1.4}, the global existence and the asymptotic behavior of the solution have been investigated by several authors (see \cite{Ma-Mei2010, Marcati-Mei2000} and references therein). Nishihara and Yang in \cite{Nishihara-Yang1999} considered the asymptotic behavior of solution to \eqref{1.4} with the Dirichlet boundary condition \eqref{1.3}, and they got the convergence rates in form of $\left\|(v-\tilde{v}, u-\tilde{u})(t) \right\|_{L^{\infty}} \leq C(t^{-\frac{3}{4}},t^{-\frac{5}{4}})$ by perturbing the initial value around the linear diffusion waves $(\tilde{v},\tilde{u})(x,t)$ which satisfies
\begin{equation}\notag
\left\{\begin{array}{l}
 \tilde{v}_{t}-\tilde{u}_{x}=0,\\[2mm]
  p^\prime(v_{+})\tilde{v}_{x}=-\alpha \tilde{u},\quad (x,t) \in \mathbb{R}^{+}\times \mathbb{R}^{+},\\[2mm]
 \tilde{u}|_{x=0}=0,\quad (\tilde{v},\tilde{u})|_{x=\infty}=(v_{+},0),
 \end{array}
   \right.	
\end{equation}
 when the initial perturbation belonged to $H^{3}\times H^{2}$. Observing that the decay rates can be better by eliminate the slower decay term $(p^\prime(\bar{v})-p^\prime(v_{+}))\bar{v}_{x}$ in \cite{Nishihara-Yang1999}, Marcati, Mei and Rubino in \cite{Marcati-Mei-Rubino2005} chose a new asymptotic profile as the nonlinear diffusion waves $(\bar{v},\bar{u})(x,t)$ which satisfies
\begin{equation}\notag
 \left\{\begin{array}{l}
 \bar{v}_{t}-\bar{u}_{x}=0,\\[2mm]
  p(\bar{v})_{x}=-\alpha \bar{u},\quad (x,t) \in \mathbb{R}^{+}\times \mathbb{R}^{+},\\[2mm]
  \bar{u}|_{x=0}=0,\quad (\bar{v},\bar{u})|_{x=\infty}=(v_{+},0),
 \end{array}
   \right.	
\end{equation}
and they improved the convergence rates as $\left\|(v-\bar{v}, u-\bar{u})(t) \right\|_{L^{\infty}} \leq C(t^{-1},t^{-\frac{3}{2}})$ when the initial perturbation is small belonging to $(H^{3} \times H^{2}) \cap (L^{1} \times L^{1})$.
Such smallness assumptions on the initial perturbation were then partially relaxed by Jiang and Zhu in \cite{Jiang-Zhu2009}.
For other studies related to the initial-boundary value problem to \eqref{1.4}, we refer to \cite{Cui-Yin-Zhu-Zhu2019,Geng-Zhang2015,Lin-Mei2010} and references therein.

When $\alpha=\sigma$, $p(v)=\frac{1}{3v}, g(u)=\frac{u^{2}\sqrt{4-3u^{2}}}{2+\sqrt{4-3u^{2}}}$ and $f(v)=\frac{1}{v}$, the system \eqref{1.4} reduces to $M_{1}$ model \eqref{1.5a} which we will study in the next.
 Motivated by these preceding results, in the present paper, we will consider the convergence to nonlinear diffusion waves for solutions to the IBVP \eqref{1.1}--\eqref{1.3} on quadrant, and we will obtain a sharper result which indeed improves those in Nishihara and Yang \cite{Nishihara-Yang1999}, and Marcati, Mei and Rubino \cite{Marcati-Mei-Rubino2005}.

According to Darcy's law, we expect the asymptotic profile of the Dirichlet IBVP \eqref{1.1}--\eqref{1.3} satisfying
\begin{equation}\label{1.5}
 \left\{\begin{array}{l}
 \bar{v}_{t}-\bar{u}_{x}=0,\\[2mm]
  p(\bar{v})_{x}=-\alpha \bar{u},\quad (x,t) \in \mathbb{R}^{+}\times \mathbb{R}^{+},\\[2mm]
  \bar{u}|_{x=0}=0,\quad (\bar{v},\bar{u})|_{x=\infty}=(v_{+},0),
 \end{array}
   \right.
\end{equation}
or
\begin{equation}\label{1.6}
 \left\{\begin{array}{l}
\displaystyle \bar{v}_{t}=-\frac{1}{\alpha}p(\bar{v})_{xx},\\[2mm]
\displaystyle \bar{u}=-\frac{1}{\alpha}p(\bar{v})_{x},\quad (x,t) \in \mathbb{R}^{+}\times \mathbb{R}^{+},\\[2mm]
\bar{v}_{x}|_{x=0}=0,\quad (\bar{v},\bar{u})|_{x=\infty}=(v_{+},0).
 \end{array}
        \right.
\end{equation}
Taking the limits to system of \eqref{1.1} as $x \rightarrow \infty$, and noting that $p(v)_x$~and~ $(g(u)f(v))_{x}$ will be vanishing, then we find that $u(\infty,t)$ satisfies formally the following ODEs:
\begin{equation}\label{1.7}
 \left\{\begin{array}{l}
\displaystyle \frac{{\rm d}}{{\rm d}t}v(\infty,t)=0,\\[2mm]
\displaystyle  \frac{{\rm d}}{{\rm d}t}u(\infty,t)=-\alpha u(\infty,t),\\[2mm]
  (v,u)(\infty,0)=(v_0,u_0)(\infty)=(v_{+},u_{+}).
 \end{array}
   \right.
\end{equation}
By direct calculation, we have
\begin{equation}\label{1.8}
 \lim _{x \rightarrow \infty} (v,u)(x,t)=(v,u)(\infty, t)= (v_{+},u_{+}{\rm e}^{-\alpha t}).
\end{equation}
To eliminate the value of $u(x,t)$ at $x=\infty$, we introduce the following auxiliary functions as in \cite{Nishihara-Yang1999, Marcati-Mei-Rubino2005, Jiang-Zhu2009}
\begin{equation}\label{1.9}
 \left\{\begin{array}{l}
 \hat{v}(x,t)=\displaystyle\frac{u_{+}}{-\alpha}{\rm e}^{-\alpha t}m_{0}(x),\\[2mm]
  \hat{u}(x,t)=u_{+}{\rm e}^{-\alpha t}\displaystyle\int_{0}^{x} m_{0}(y) {\rm d}y,
 \end{array}
   \right.
\end{equation}
where $m_{0} \in C_{0}^{\infty}(\mathbb{R}^{+})$ satisfies
\begin{equation}\notag
  \int_{0}^{\infty} m_{0}(x) {\rm d}x=1.
\end{equation}
Then one can immediately obtain
\begin{equation}\label{1.10}
 \left\{\begin{array}{l}
\hat{v}_{t}-\hat{u}_{x}=0,\\[2mm]
\hat{u}_{t}=- \alpha\hat{u},\quad (x,t) \in \mathbb{R}^{+}\times \mathbb{R}^{+},\\[2mm]
\hat{u}|_{x=0}=0,\quad (\hat{v},\hat{u})|_{x=\infty}=(0,u_{+}{\rm e}^{-\alpha t}).
 \end{array}
        \right.
\end{equation}
Combining \eqref{1.1}, \eqref{1.5} and \eqref{1.10}, we obtain
\begin{equation}\label{1.11}
\left\{\begin{array}{l}
(v-\bar{v}-\hat{v})_{t}-(u-\bar{u}-\hat{u})_{x}=0,\\[2mm]
(u-\bar{u}-\hat{u})_{t}+(p(v)-p(\bar{v}))_{x}+\alpha(u-\bar{u}-\hat{u})=-\bar{u}_{t}+(g(u)f(v))_{x}.
\end{array}
   \right.
\end{equation}
Integrating $\eqref{1.11}_{1}$ over $[0,\infty) \times [0,t]$, and noticing that $\bar{v}(x,0)=v_{+}+\delta_{0}\phi_{0}(x)$ in the next section as in \cite{Marcati-Mei-Rubino2005}, we get
\begin{equation}\label{1.12}
\int_{0}^{\infty}(v-\bar{v}-\hat{v})(y, t) {\rm d}y=\int_{0}^{\infty}\left(v_{0}(x)-v_{+}\right) d x-\delta_{0} \int_{0}^{\infty} \phi_{0}(x) d x+\frac{u_{+}}{\alpha}=0.
\end{equation}
Thus, we reach the setting of perturbations
\begin{equation}\label{1.13}
\left\{\begin{array}{l}
V(x,t)= -\displaystyle\int_{x}^{\infty} \left(v-\bar{v}-\hat{v}\right)(y,t) {\rm d}y,\\[2mm]
z(x,t)=u(x,t)-\bar{u}(x,t)-\hat{u}(x,t),
\end{array}
   \right.
\end{equation}
then after the integration of $\eqref{1.11}_{1}$ over $(x,\infty)$, we have the reformulated problem
\begin{equation}\label{1.14}
 \left\{\begin{array}{l}
V_{t}-z=0,\\[2mm]
z_{t}+ \left(p(V_{x}+\bar{v}+\hat{v})-p(\bar{v})\right)_{x}+\alpha z=(g(z+\bar{u}+\hat{u})f(V_{x}+\bar{v}+\hat{v}))_{x},
 \end{array}
        \right.
 \end{equation}
 and the linearized problem around $\bar{v}$ gives
 \begin{equation}\label{1.15}
 \left\{\begin{array}{l}
V_{t}-z=0,\\[2mm]
z_{t}+\left(p^\prime(\bar{v})V_{x}\right)_{x}+\alpha z=F_{1}+F_{2},\\[2mm]
(V,z)|_{t=0} :=(V_0,z_0)(x),\\[2mm]
V|_{x=0}=0,
 \end{array}
        \right.
 \end{equation}
 or
\begin{equation}\label{1.16}
 \left\{\begin{array}{l}
V_{tt}+\left(p^\prime(\bar{v})V_{x}\right)_{x}+\alpha V_{t}=F_{1}+F_{2},\\[2mm]
(V,V_{t})|_{t=0} :=(V_0,z_0)(x),\\[2mm]
V|_{x=0}=0,
\end{array}
        \right.
\end{equation}
 where
 \begin{equation}\label{1.17}
{F}_{1}=\frac{1}{\alpha}p(\bar{v})_{xt}-\left(p(V_{x}+\bar{v}+\hat{v})-p(\bar{v})-p^\prime(\bar{v})V_{x} \right)_{x},
\end{equation}
\begin{equation}\label{1.18}
{F}_{2}= \left(g(V_{t}+\bar{u}+\hat{u})f(V_{x}+\bar{v}+\hat{v})\right)_{x},
\end{equation}
\begin{equation}\label{1.19}
\left(V_{0}, z_{0}\right)(x)=\left(-\int_{x}^{\infty}\left(v_{0}(y)-\bar{v}(y, 0)-\hat{v}(y, 0)\right) {\rm d}y, ~~ u_{0}(x)-\bar{u}(x, 0)-\hat{u}(x, 0)\right).
\end{equation}

\vspace{4mm}

{\bf Notations.} In the following, $C$ and $c$ denote the generic positive constants depending only on the initial data,  but independent of the time. For function spaces, $L^{p}=L^{p}(\mathbb{R}^{+}) ~ (1\leq p\leq \infty)$ is an usual Lebesgue space with the norm
\begin{equation}\notag
 \|f\|_{L^{p}}=\left(\int_{\mathbb{R}^{+}}|f(x)|^{p}{\rm d}x\right)^{\frac{1}{p}}, ~~~ 1\leq p< \infty~~~\mbox{and} ~~~ \|f\|_{L^{\infty}}=\sup \limits_{\mathbb{R}^{+}}|f(x)|.
\end{equation}
For any integer $l \geq 0$, $H^{l}$ denotes the usual $l$-th order Sobolev space on $\mathbb{R}^{+}$ with its norm
\begin{equation}\notag
 \|f\|_{l}=\left(\sum_{j=0}^{l}\left\|\partial_{x}^{j} f\right\|^{2}\right)^{\frac{1}{2}}, ~~~ \quad\|\cdot\|=\|\cdot\|_{0}=\|\cdot\|_{L^{2}}.
\end{equation}
In order to state our main results, we assume that the following assumptions hold:
\begin{align}
&\label{1.21}p,f \in C^{3}(\mathbb{R}^{+}), ~~ p^\prime<0~ \mbox{for}~ \mbox{any}~ v>0, \\
&\label{1.22}g \in C^{3}(\mathbb{R}), ~~~ g(0)=g^\prime (0)=0.
\end{align}

The followings  are the  main results.
\begin{theorem}\label{Thm1}
 Suppose that \eqref{1.21}--\eqref{1.22} hold, $\|V_{0}\|_{H^{3}}+\|z_{0}\|_{H^{2}}$ and $\delta:=\|v_{0}-v_{+}\|_{L^{1}}+|u_{+}|$ are sufficiency small. Then, there exists a unique time-global solution $V(x,t)$ of \eqref{1.15}, which satisfies
 \begin{equation}\notag
   V(x,t) \in C^{k}(0,\infty; H^{3-k}), ~~~ k=0,1,2,3; \quad z(x,t) \in C^{k}(0,\infty; H^{2-k}), ~~~ k=0,1,2,
  \end{equation}
  and
 \begin{align}
  \label{1.23}&\|\partial_{x}^{k}V(t)\| \leq C(1+t)^{-\frac{k}{2}},\qquad~~~~~~~~ 1\leq k\leq 3,\\
  \label{1.24}&\|\partial_{x}^{k}\partial_{t}^{j}z(t)\|\leq C(1+t)^{-\frac{k}{2}-j-1},\qquad  0\leq k+j\leq 2,~0\leq j\leq 1,\\
  \label{1.25}&\|\partial_{t}^{2}z(t)\|\leq C(1+t)^{-\frac{5}{2}}.
  \end{align}
\end{theorem}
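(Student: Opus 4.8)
The plan is to establish global-in-time a priori estimates for the reformulated problem \eqref{1.16} (equivalently \eqref{1.15}) by a nested time-weighted energy method, sharpened at the lowest order by the Green's function of the associated constant-coefficient damped wave operator, and then to combine these with a standard local existence result and a continuity argument. Recall that $V_x=v-\bar v-\hat v$, $z=V_t=u-\bar u-\hat u$, that $\hat v,\hat u$ carry the exponentially decaying factor ${\rm e}^{-\alpha t}$, and that the decay of the space--time derivatives of $\bar v$ is governed by the nonlinear parabolic problem \eqref{1.6} — its parabolic decay estimates I take as established beforehand. For $\|V_0\|_{H^3}+\|z_0\|_{H^2}$ small a contraction argument in the natural energy class gives a unique local solution with the stated regularity; to continue it and obtain \eqref{1.23}--\eqref{1.25} we set
\begin{equation}\notag
\begin{aligned}
N(T):=\sup_{0\le t\le T}\Big\{&\|V(t)\|+\|z(t)\|+\sum_{k=1}^{3}(1+t)^{\frac k2}\|\partial_x^kV(t)\|\\
&\ +\!\!\sum_{\substack{0\le j\le1\\ 0\le k+j\le2}}\!\!(1+t)^{\frac k2+j+1}\|\partial_x^k\partial_t^jz(t)\|+(1+t)^{\frac52}\|\partial_t^2z(t)\|\Big\},
\end{aligned}
\end{equation}
and work under the a priori assumption $N(T)\le\varepsilon$ with $\varepsilon$ small, to be improved below.

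The energy part has three layers. First, multiplying \eqref{1.16} by $V_t$ and by a small multiple of $V$, integrating over $\mathbb{R}^{+}$ and using $p'(\bar v)\le-c_0<0$ together with $V|_{x=0}=V_t|_{x=0}=0$, we obtain $\frac{{\rm d}}{{\rm d}t}\mathcal{E}_0+c(\|V_t\|^2+\|V_x\|^2)\le C(\|F_1\|+\|F_2\|)(\|V_t\|+\|V\|)$ with $\mathcal{E}_0\sim\|V_t\|^2+\|V_x\|^2$; note this scheme controls $\|V_x\|$ but never $\|V\|$ itself, which is precisely why \eqref{1.23} only involves $\partial_x^kV$ with $k\ge1$. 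Second, differentiating \eqref{1.16} in $x$ (up to three times) and in $t$ (up to twice) gives the higher-order analogues, the commutators $[\partial^\gamma,p'(\bar v)]$ being absorbed by the smallness of $\delta$ and $N(T)$, and the higher-order boundary terms — which do not all vanish — being controlled via the structure of the equation at $x=0$, as in \cite{Nishihara-Yang1999,Marcati-Mei-Rubino2005}. Third, for each order one multiplies the differentiated equation by $(1+t)^{\ell}$ times the natural multiplier and uses $(1+t)^{\ell}\partial^\gamma V_{tt}\,\partial^\gamma V_t=\frac12\partial_t\big[(1+t)^{\ell}(\partial^\gamma V_t)^2\big]-\frac\ell2(1+t)^{\ell-1}(\partial^\gamma V_t)^2$: the damping $\alpha(1+t)^{\ell}(\partial^\gamma V_t)^2$ dominates the weight-residual for large $t$ (small $t$ being covered by the unweighted estimate), so bootstrapping upward from the crudest bound — in the order $\partial_x^kV$ ($k=1,2,3$), then the mixed $\partial_x^k\partial_t^jz$, and finally $\partial_t^2z$ through $z_{tt}=-(p'(\bar v)V_x)_{xt}-\alpha z_t+\partial_t(F_1+F_2)$ plus one extra high-weight estimate for $z$ — yields $(1+t)^{k}\|\partial_x^kV(t)\|^2+(1+t)^{k+2j+2}\|\partial_x^k\partial_t^jz(t)\|^2\le C(\text{data})+CN(T)^2$; the $\partial_t^2z$ estimate loses half a power relative to the naive count because $z_0$ is only in $H^2$, which is the origin of the $(1+t)^{-5/2}$ in \eqref{1.25}.

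For the source terms, the ``linear'' piece $\frac1\alpha p(\bar v)_{xt}$ of $F_1$ obeys sharp $L^2$ bounds from the parabolic decay of $\bar v$, while the Taylor remainder $-(p(V_x+\bar v+\hat v)-p(\bar v)-p'(\bar v)V_x)_x$ and all of $F_2$ are quadratic in the perturbation — here $g(0)=g'(0)=0$ is essential, as it forces $g(u)=O(u^2)$ — and hence are $O(N(T)^2)$ times a decaying weight. To capture the sharp low-order rates $\|V_x(t)\|\lesssim(1+t)^{-1/2}$ and $\|z(t)\|\lesssim(1+t)^{-1}$, which the energy method alone cannot see, I would use the Duhamel representation of \eqref{1.16} with the Green's function of $\partial_t^2+\alpha\partial_t-p'(v_+)\partial_x^2$ on $\mathbb{R}^{+}$ under $V|_{x=0}=0$, constructed by odd reflection from the whole-line Green's function (which splits into a heat-kernel-type part plus an exponentially small remainder) and relegating the variable-coefficient discrepancy $((p'(\bar v)-p'(v_+))V_x)_x$ to the source, as in \cite{Nishihara-Wang-Yang2000}; the $L^1$--$L^2$ smoothing of this Green's function, combined with the $L^1$-smallness carried by $\delta=\|v_0-v_+\|_{L^1}+|u_+|$ and the decay of the source, closes the bootstrap and in fact delivers all the higher $x$-derivatives at the heat-smoothing rate.

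Collecting the estimates gives $N(T)\le C(\|V_0\|_{H^3}+\|z_0\|_{H^2}+\delta)+CN(T)^2$, so for data small enough $N(T)\le 2C(\|V_0\|_{H^3}+\|z_0\|_{H^2}+\delta)$ uniformly in $T$; the solution is then global by continuation and \eqref{1.23}--\eqref{1.25} follow, the regularity class being read off from the energy spaces. I expect the main obstacle to be the bookkeeping inside the weighted hierarchy and its coupling with the half-line Green's function: one must verify that at each differentiation order the dissipation from the $V_t$- and $V$-multipliers together with the decay of $F_1$ and $F_2$ is \emph{exactly} enough to close the chosen weight; the hierarchy must be organized so that decay of $V$ itself — which this method never provides — is never needed; and the boundary contributions and the variable-coefficient perturbation must be treated consistently when passing between the energy estimates and the Green's function formula.
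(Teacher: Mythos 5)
Your overall skeleton — local existence, an $N(T)$ weighted exactly by the rates in \eqref{1.23}--\eqref{1.25}, a nested hierarchy of time-weighted energy estimates organized by differentiation order, absorption of the weight residual $\ell(1+t)^{\ell-1}(\partial^\gamma V_t)^2$ by the damping term, the use of $g(0)=g'(0)=0$ to make $F_2$ effectively quadratic, and closure via $N(T)\le C(\text{data})+CN(T)^2$ — matches the structure of the paper's proof of Theorem~\ref{Thm1} (Proposition~\ref{p1} and Lemmas~\ref{l3.2}--\ref{l3.7}). Your observation that the scheme controls $\|V_x\|$ but never $\|V\|$, and that this is why \eqref{1.23} starts at $k=1$, is exactly right.

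There is, however, one claim in your proposal that is factually wrong and would send you down an unnecessary and more delicate path: you assert that the sharp low-order rates $\|V_x(t)\|\lesssim(1+t)^{-1/2}$ and $\|z(t)\|\lesssim(1+t)^{-1}$ ``cannot be seen'' by the energy method alone and must be obtained from the half-line Green's function of $\partial_t^2+\alpha\partial_t-p'(v_+)\partial_x^2$. The paper's proof of Theorem~\ref{Thm1} uses \emph{no} Green's function at all. Lemma~\ref{l3.2}, obtained from the multipliers $V$ and $V_t$ together with the weight $(1+t)$, already gives $(1+t)\|V_x(t)\|^2\le C$, hence $\|V_x\|\lesssim(1+t)^{-1/2}$; and Lemma~\ref{l3.5}, obtained by differentiating \eqref{1.16} in $t$ once, testing with $V_t$ and $V_{tt}$, and using weights up to $(1+t)^3$, gives $(1+t)^2\|V_t(t)\|^2\le C$, i.e.\ $\|z\|\lesssim(1+t)^{-1}$. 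The apparent paradox — a plain $L^2$ energy estimate producing integer decay for $z$ — is resolved by noting that $z=V_t$ is a \emph{time derivative}: each $\partial_t$ gains a full $(1+t)^{-1}$ through the weighted energy hierarchy, whereas $\partial_x$ gains only $(1+t)^{-1/2}$, and the paper's estimate for $z$ descends from the $t$-differentiated energy, not the base one. Introducing the damped-wave (or heat) Green's function at this stage is both unnecessary and potentially misleading, because it pushes the nonlinear terms into a Duhamel integral where the bootstrap is more fragile than in the energy framework; in the paper the Green's function representation \eqref{3.104} enters only in Theorem~\ref{Thm2}, where it serves the distinct purpose of extracting the extra factor $(1+t)^{-1/4}$ from the additional hypothesis $V_0+\tfrac1\alpha z_0\in L^1$, a gain that genuinely lies beyond what $L^2$ energy estimates can deliver.

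A secondary, smaller point: you suggest that the rate $\|z_{tt}\|\lesssim(1+t)^{-5/2}$ ``loses half a power'' because $z_0\in H^2$; the mechanism in the paper (Lemma~\ref{l3.7}) is rather that the highest-order quantity $V_{ttt}=z_{tt}$ only reaches the weight $(1+t)^5$ in the hierarchy because at that level there is no further $x$- or $t$-derivative available to pair it with, so one obtains $(1+t)^5\|V_{ttt}\|^2\le C$ rather than a higher weight. Your heuristic is consistent with this, but the loss is built into the weight ladder itself rather than being an artifact of low regularity of $z_0$ alone. Correcting the Green's-function claim and dropping that machinery from the proof of Theorem~\ref{Thm1} would bring your argument in line with the paper's and considerably shorten it.
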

\begin{theorem}\label{Thm2}
 Under the assumptions of Theorem \ref{Thm1}, we assume further that $(V_{0}+\frac{1}{\alpha}z_{0})(x) \in L^{1}$, then solution $V(x,t)$ obtained in Theorem \ref{Thm1} satisfies the following improved decay rates
 \begin{align}
  \label{1.26}&\|\partial_{x}^{k}V(t)\| \leq C(1+t)^{-\frac{k}{2}-\frac{1}{4}},\qquad~~~~~ 0\leq k\leq 3,\\
  \label{1.27}&\|\partial_{x}^{k}\partial_{t}^{j}z(t)\|\leq C(1+t)^{-\frac{k}{2}-j-\frac{5}{4}},\qquad  0\leq k+j\leq 2,~0\leq j\leq 1,\\
  \label{1.28}&\|\partial_{t}^{2}z(t)\|\leq C(1+t)^{-\frac{11}{4}}.
  \end{align}
Moreover, if $u_{+}=0$, $\int_{0}^{\infty}(V_{0}+\frac{1}{\alpha}z_{0})(x){\rm d}x=0$ and $W_{0}(x):=\int_{0}^{x}(V_{0}+\frac{1}{\alpha}z_{0})(y){\rm d}y \in L^{1}$, then faster decay rates hold:
\begin{align}
  \label{1.29}&\|\partial_{x}^{k}V(t)\| \leq C(1+t)^{-\frac{k}{2}-\frac{3}{4}},\qquad~~~~~ 0\leq k\leq 3,\\
  \label{1.30}&\|\partial_{x}^{k}\partial_{t}^{j}z(t)\|\leq C(1+t)^{-\frac{k}{2}-j-\frac{7}{4}},\qquad  0\leq k+j\leq 2,~0\leq j\leq 1,\\
  \label{1.31}&\|\partial_{t}^{2}z(t)\|\leq C(1+t)^{-\frac{13}{4}}.
  \end{align}
\end{theorem}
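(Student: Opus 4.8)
The plan is to upgrade the rates of Theorem~\ref{Thm1} by exploiting the extra low-frequency information in the hypothesis $\Phi_0:=V_0+\frac{1}{\alpha}z_0\in L^1$, through the Green's function representation of \eqref{1.16}. I would work with $W:=V+\frac{1}{\alpha}z=V+\frac{1}{\alpha}V_t$, the profile singled out by the diffusion phenomenon for the damped wave operator in \eqref{1.16}; since $u|_{x=0}=\bar u|_{x=0}=\hat u|_{x=0}=0$ we have $z|_{x=0}=0$, hence $W|_{x=0}=0$, and $W$ solves the parabolic problem
\[
W_t-\tfrac{1}{\alpha}\bigl(p'(\bar v)W_x\bigr)_x=\tfrac{1}{\alpha}(F_1+F_2)+\tfrac{1}{\alpha^2}\bigl(p'(\bar v)z_x\bigr)_x,\qquad W|_{x=0}=0,\qquad W|_{t=0}=\Phi_0,
\]
while $V$ is recovered from $V_t=\alpha(W-V)$, i.e.\ $V(t)=\alpha\int_0^t e^{-\alpha(t-s)}W(s)\,ds$ up to an exponentially small term. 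Let $G(x,y,t)$ denote the Green's function of $\partial_t-\frac1\alpha(p'(v_+)\partial_x\cdot)_x$ on $\mathbb{R}^+$ with Dirichlet data at $x=0$, obtained by odd reflection from the constant-coefficient heat kernel; it obeys the standard bounds $\|\partial_x^k\partial_t^jG(\cdot,y,t)\|_{L^2_x}\le C(1+t)^{-\frac14-\frac k2-j}$ uniformly in $y\ge0$, together with $G(x,0,t)\equiv0$.

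I would then write, by Duhamel's principle,
\[
W(x,t)=\int_0^\infty G(x,y,t)\,\Phi_0(y)\,dy+\int_0^t\!\!\int_0^\infty G(x,y,t-s)\,\mathcal N(y,s)\,dy\,ds+R(x,t),
\]
where $\mathcal N$ gathers $\frac1\alpha(F_1+F_2)+\frac1{\alpha^2}(p'(\bar v)z_x)_x$ and the commutator coming from replacing $p'(\bar v)$ by $p'(v_+)$, and $R$ collects the exponentially small wave-type remainders. The first term is bounded by $\|\partial_x^k(G*\Phi_0)\|\le C(1+t)^{-\frac14-\frac k2}\|\Phi_0\|_{L^1}$. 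For the Duhamel term I would split $\int_0^t=\int_0^{t/2}+\int_{t/2}^t$: on $[0,t/2]$, placing all $x$-derivatives and, since $F_1,F_2$ and $(p'(\bar v)z_x)_x$ are in divergence form, performing one $y$-integration by parts onto $G(t-s)$ and using its decay; on $[t/2,t]$, using instead the decay of $\mathcal N(s)$ — this is where Theorem~\ref{Thm1} enters: by \eqref{1.23}--\eqref{1.25}, the smallness of $\delta$, and the parabolic/exponential decay of $\bar v-v_+,\ \bar v_x,\ \hat v$, every constituent of $\mathcal N$ is quadratic in $(V_x,z)$ or linear but spatially localised, hence integrable in time against $\|\partial_xG(t-s)\|$. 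I would close the argument on the time-weighted quantity
\[
M(t):=\sum_{k=0}^{3}\sup_{0\le s\le t}(1+s)^{\frac k2+\frac14}\|\partial_x^kV(s)\|+\sum_{\substack{k+j\le2\\ j\le1}}\sup_{0\le s\le t}(1+s)^{\frac k2+j+\frac54}\|\partial_x^k\partial_t^jz(s)\|+\sup_{0\le s\le t}(1+s)^{\frac{11}{4}}\|\partial_t^2z(s)\|,
\]
recovering $V$ from $V(t)=\alpha\int_0^te^{-\alpha(t-s)}W(s)\,ds$ and the $z$-estimates from \eqref{1.15} together with Theorem~\ref{Thm1}; the resulting inequality $M(t)\le C(\|V_0\|_{H^3}+\|z_0\|_{H^2}+\delta+\|\Phi_0\|_{L^1})+CM(t)^2$ then yields \eqref{1.26}--\eqref{1.28} by smallness.

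For the faster rates \eqref{1.29}--\eqref{1.31}, note that $u_+=0$ forces $\hat v\equiv\hat u\equiv0$, so $\Phi_0=V_0+\frac1\alpha z_0$ and $\mathcal N$ simplifies accordingly. Using $\int_0^\infty\Phi_0=0$ and $W_0(x)=\int_0^x\Phi_0(y)\,dy\in L^1$, I would integrate by parts in $y$ in the linear term: since $G(x,0,t)\equiv0$, the boundary contribution at $y=0$ vanishes (and so does the one at $y=\infty$), leaving $-\int_0^\infty\partial_yG(x,y,t)\,W_0(y)\,dy$, whose extra $y$-derivative of $G$ produces the additional factor $(1+t)^{-1/2}$. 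For the Duhamel term one integrates by parts once more in $y$ (the sources being $(\cdot)_x$), the boundary contributions at $y=0$ being governed by $\bar v-v_+$ and its derivatives near $x=0$, which decay exponentially by the diffusion-wave estimates; together with the Theorem~\ref{Thm1} rates this supplies the extra $(1+t)^{-1/2}$. Re-running the bootstrap with all weights raised by $\frac12$ gives \eqref{1.29}--\eqref{1.31}.

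The main obstacle will be the uniform control of $\int_0^t\!\int_0^\infty\partial_x^kG(x,y,t-s)\,\mathcal N(y,s)\,dy\,ds$: after the divergence-form integrations by parts one must verify that each piece of $\mathcal N$ — in particular $p(\bar v)_{xt}$ appearing through $F_1$, the quadratic remainder $(p(V_x+\bar v+\hat v)-p(\bar v)-p'(\bar v)V_x)_x$, and $(g(z+\bar u+\hat u)f(V_x+\bar v+\hat v))_x$ — decays, in the norms provided by Theorem~\ref{Thm1}, strictly faster than the borderline rate at which the convolution would lose the gain $(1+t)^{-1/4}$ (resp.\ $(1+t)^{-3/4}$), and that the near-boundary terms created by the integrations by parts are genuinely negligible. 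Balancing the splitting time $t/2$, the divergence structure, and the sharp heat-kernel bounds is the technical heart of the proof.
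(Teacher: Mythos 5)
Your choice to work with $W:=V+\frac{1}{\alpha}z$ rather than with $V$ is a genuinely different decomposition: the paper keeps $V$ as the unknown, recasts \eqref{1.16} as the parabolic problem \eqref{3.101} with $-V_{tt}$ in the source, and neutralises that source by integrating by parts in the Duhamel time variable as in \eqref{3.103}; you absorb $V_t$ into the unknown, which eliminates the $V_{tt}$ source but creates a new \emph{linear, non-localised} term $\frac{1}{\alpha^2}\bigl(p'(\bar v)z_x\bigr)_x$ on the right (and your equation has a sign slip: with $p'<0$ the parabolic operator is $W_t+\frac{1}{\alpha}\bigl(p'(\bar v)W_x\bigr)_x$, with a plus sign). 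That new term is not harmless: it costs two integrations by parts in $y$, using $G(x,0,\cdot)\equiv0$ and $z(0,\cdot)\equiv0$ to kill the boundary contributions, a step your sketch does not address.

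The substantive gap is your mechanism for the time-derivative decays \eqref{1.27}--\eqref{1.28}. You propose to read them off from $V(t)=e^{-\alpha t}V_0+\alpha\int_0^te^{-\alpha(t-s)}W(s)\,ds$ and $z=\alpha(W-V)$, but $W$ and $V$ each only decay like $(1+t)^{-1/4}$ in $L^2$, and the Duhamel representation alone does not reveal that their difference $\frac{1}{\alpha}z$ gains a full extra factor $(1+t)^{-1}$; the single bootstrap $M(t)\le C+CM(t)^2$ with all the $\partial_x^k\partial_t^jz$ norms bundled in would not close as written. The paper separates the tasks: Lemma \ref{l3.8} extracts from the Green's function representation only the single estimate $\|V(t)\|\le C(1+t)^{-1/4}$, and Lemma \ref{l3.9} then multiplies the already-established differential inequalities from the proof of Theorem \ref{Thm1} (for instance \eqref{3.23}, \eqref{3.22}, \eqref{3.36}, \eqref{3.35}, \eqref{3.56}, \eqref{3.55}, \eqref{3.76}, \eqref{3.75}, \eqref{3.87}, \eqref{3.98}, \eqref{3.97}) by $(1+t)^{\epsilon_0+\frac12},(1+t)^{\epsilon_0+\frac32},\dots$ for a fixed $0<\epsilon_0<\frac12$ and integrates in $t$, using the improved $\|V\|$-decay precisely to absorb the borderline contribution $\int_0^t(1+s)^{\epsilon_0-\frac12}\|V(s)\|^2\,ds\le C(1+t)^{\epsilon_0}$. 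It is this $\epsilon_0$-shifted energy ladder, not the Green's function, that delivers every rate in \eqref{1.26}--\eqref{1.28}, in particular the ones with $j\ge1$. For \eqref{1.29}--\eqref{1.31} a bilinear bootstrap does appear (Lemma \ref{l3.10}), but only on $M(t)=\sup_{0\le s\le t,\,0\le k\le1}(1+s)^{\frac34+\frac k2}\|\partial_x^kV(s)\|$, with the $\delta M(t)$ feedback coming from $J_5$ and $J_9$ once $u_+=0$ forces $\hat v\equiv\hat u\equiv0$; after closing that, the remaining derivatives again follow from the same weighted energy ladder. Your sketch correctly identifies the Duhamel estimates as a technical core, but it omits the energy-estimate re-run with shifted weights, which is the actual engine behind most of the theorem.
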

\begin{remark}
As we can see from Theorem 2.1 in Marcati, Mei and Rubino \cite{Marcati-Mei-Rubino2005}, the authors required that $\|V_{0}\|_{H^{3}}+\|z_{0}\|_{H^{2}}+\|v_{0}-v_{+}\|_{L^{1}}+|u_{+}|+\|V_{0}\|_{L^{1}}+\|z_{0}\|_{L^{1}}$ be small enough to get the decay estimates $\|\partial_{x}^{k}\partial_{t}^{l}V(t)\| \leq C(1+t)^{-\frac{1}{4}-\frac{k}{2}-l}$ in the case of $0\leq k \leq 2$ or $0\leq l \leq 1$, but estimates for other cases have not been elucidated. In contrast, our conditions are weaker, and the conclusions are sharper.	
\end{remark}
\begin{remark}
In fact, except for $\|z_{tt}(t)\|$, the decay estimates obtained in Theorem \ref{Thm2} are all optimal. As for $\|z_{tt}(t)\|$, we can also use the similar way to obtain an extra time-decay $(1+t)^{-\frac{1}{2}}$ when $\left(V_{0},z_{0}\right)(x)$ is small belonging to $H^{4} \times H^{3}$. 	
\end{remark}
\begin{remark}
 As for compressible Euler equations with damping, the condition $u_{+}=0$ in Theorem \ref{Thm2} can be removed. One can see \eqref{3.140} in Section \ref{s3} for more details.	
\end{remark}

Before concluding this section, we point out the main difference between the study in this paper and the related results in the previous works. Similar arguments used in \cite{Nishihara-Yang1999, Marcati-Mei-Rubino2005}, we will prove Theorem \ref{Thm1} by deriving the key {\it a priori} energy estimates.
But for the way to obtain the decay estimates \eqref{1.26}--\eqref{1.28}, our analyses is quite different from \cite{Marcati-Mei-Rubino2005}.
From the dissussion in \cite{Marcati-Mei-Rubino2005}, we can see that once they got the existence and decay rates of the solution in the $L^{2}$-framework, they used Green's function to get an integral representation of the solution, and only analyzed the integral expression of the solution to obtain the improved decay rates $\|\partial_{x}^{k}\partial_{t}^{l}V(t)\| \leq C(1+t)^{-\frac{1}{4}-\frac{k}{2}-l}$ in the case of $0\leq k \leq 2$ or $0\leq l \leq 1$, but other cases cann't be clarified.
Unlike their approach, by analyzing the integral representation of the solution, we firstly obtain $\|V(t)\| \leq C(1+t)^{-\frac{1}{4}}$. With all these preparations, by using the time-weighted energy estimates, we can get the desired estimates \eqref{1.26}--\eqref{1.28}. Similar analysis ideas can also deduce \eqref{1.29}--\eqref{1.31}. See Section \ref{s3} for more detials.
This technique is quite useful, it has been successfully used in \cite{Geng-Zhang2015}. We think this approach has at least two advantages: one is that we can get the desired decay rates without putting additional regularity on the initial data, and the other is that the calculation process is much simpler and clearer.
Finally, we want to point out that for the Newman boundary problem, due to the difficulties in dealing with some boundary terms, we have not found a suitable method, but some ideas in this paper provide some key clues, and we will continue to study this problem in the future.

The arrangement of the present paper is as follows. In Section \ref{s2}, we prepare some preliminaries, which are useful in the proof of theorems. Section \ref{s3} is devoted to the proof of our main theorems.

\section{Preliminaries}\label{s2}
In this section, we state some known results which will be used for the proof of our main results in next section.

Firstly, as for the solution $(\bar{v},\bar{u})(x,t)$ of \eqref{1.6}, we can obtain that
\begin{lemma}\label{l2.1}
{\rm (see \cite{Jiang-Zhu2009, Marcati-Mei-Rubino2005})} Let $\bar{v}(x,t)$ be a solution of \eqref{1.6} with the initial data $\bar{v}(x,0)=v_{+}+\delta_{0}\phi_{0}$, where $\delta_{0}$ is a constant satisfying
\begin{equation}\label{2.1}
 \int_{0}^{\infty}\left[v_{0}(x)-v_{+}\right] d x-\delta_{0} \int_{0}^{\infty} \phi_{0}(x) {\rm d} x+\frac{u_{+}}{\alpha}=0,
\end{equation}
and $\phi_{0}$ is a given smooth function satisfying
\begin{equation}\notag
 \phi_{0}\in L^{1}(\mathbb{R}^{+}) ~~~~ \mbox{and} ~~~~ \int_{0}^{\infty}\phi_{0} {\rm d}x\neq0.
\end{equation}
Then the solution $(\bar{v},\bar{u})(x,t)$ of \eqref{1.6} globally and uniquely exists and satisfies
\begin{align}\label{2.2}
&\|\partial^{k}_{x}\partial^{j}_{t}(\bar{v}-v_{+})(t)\|\leq C|\delta_{0}|(1+t)^{-\frac{4j+2k+1}{4}},\nonumber\\
&\|\partial^{k}_{x}\partial^{j}_{t}(\bar{v}-v_{+})(t)\|_{L^{1}}\leq C|\delta_{0}|(1+t)^{-\frac{2j+k}{2}},\nonumber\\
&\|\partial^{k}_{x}\partial^{j}_{t}(\bar{v}-v_{+})(t)\|_{L^{\infty}}\leq C|\delta_{0}|(1+t)^{-\frac{2j+k+1}{2}},\qquad k,j\geq 0.
  \end{align}
\end{lemma}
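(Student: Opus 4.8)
The plan is to treat \eqref{1.6} as a small, quadratic perturbation of the linear heat equation and close the decay estimates by a Duhamel iteration. Set $w:=\bar v-v_{+}$; then the first equation of \eqref{1.6} becomes the quasilinear parabolic equation
\[
 w_{t}=-\frac{1}{\alpha}\big(p'(v_{+}+w)\,w_{x}\big)_{x}=\mu\,w_{xx}+\big(N(w)\,w_{x}\big)_{x},
\]
where $\mu:=-\frac{1}{\alpha}p'(v_{+})>0$ by \eqref{1.21} and $N(w):=-\frac{1}{\alpha}\big(p'(v_{+}+w)-p'(v_{+})\big)$ is a $C^{2}$ function with $N(0)=0$, so $N(w)=O(|w|)$. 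The boundary condition $\bar v_{x}|_{x=0}=0$ says $w_{x}(0,t)=0$, and since then the flux $(\mu+N(w))w_{x}$ vanishes at $x=0$, the even extension $\tilde w(x,t):=w(|x|,t)$ is a solution of the same equation on all of $\mathbb{R}$ with even datum $\tilde w_{0}(x)=\delta_{0}\phi_{0}(|x|)$ (we may take $\phi_{0}$ so that this extension is smooth); restricting $\tilde w$ back to $\mathbb{R}^{+}$ recovers $\bar v$ and automatically enforces the Neumann condition. Here $\|\tilde w_{0}\|_{L^{1}(\mathbb{R})}\le C|\delta_{0}|$ is the smallness parameter throughout, and the companion flux $\bar u=-\frac{1}{\alpha}(\mu+N(w))w_{x}$ inherits its bounds from those of $w$ with one extra spatial derivative, so it suffices to estimate $w$.

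First I would record the Duhamel representation with the one-dimensional heat kernel $G_{\mu}$ of $\partial_{t}-\mu\partial_{x}^{2}$,
\[
 \tilde w(t)=G_{\mu}(t)\ast\tilde w_{0}+\int_{0}^{t}\partial_{x}G_{\mu}(t-s)\ast\big(N(\tilde w)\,\tilde w_{x}\big)(s)\,{\rm d}s,
\]
together with the classical kernel bounds $\|\partial_{x}^{k}G_{\mu}(t)\ast\varphi\|_{L^{p}(\mathbb{R})}\le C(1+t)^{-\frac{k}{2}-\frac{1}{2}(1-\frac{1}{p})}\|\varphi\|_{L^{1}}$ for $1\le p\le\infty$, $k\ge0$ (for $0\le t\le1$ one uses instead local parabolic regularity, which is where the smoothness of $\phi_{0}$ enters). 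Applied to $G_{\mu}(t)\ast\tilde w_{0}$ these give exactly the rates in \eqref{2.2} with $j=0$. To absorb the nonlinearity I would work in the solution functional
\[
 M(t):=\sup_{0\le s\le t}\sum_{k=0}^{3}\Big[(1+s)^{\frac{2k+1}{4}}\|\partial_{x}^{k}\tilde w(s)\|_{L^{2}}+(1+s)^{\frac{k}{2}}\|\partial_{x}^{k}\tilde w(s)\|_{L^{1}}+(1+s)^{\frac{k+1}{2}}\|\partial_{x}^{k}\tilde w(s)\|_{L^{\infty}}\Big]
\]
and show that on $\{M(t)\le 2C|\delta_{0}|\}$ the right-hand side of the Duhamel formula is a contraction once $|\delta_{0}|$ is small. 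The key is that $N(\tilde w)\tilde w_{x}$ is quadratic: for instance $\|(N(\tilde w)\tilde w_{x})(s)\|_{L^{1}}\le C\|\tilde w(s)\|_{L^{2}}\|\tilde w_{x}(s)\|_{L^{2}}\le CM(t)^{2}(1+s)^{-1}$, whence the Duhamel integral at derivative order $k$ is $\le CM(t)^{2}(1+t)^{-\frac{k}{2}-\frac{1}{2}(1-\frac{1}{p})}\log(2+t)$, strictly faster than the linear rate; the analogous bookkeeping at $k=1,2,3$ (distributing derivatives between the kernel and the quadratic factor, using $N\in C^{3}$ by \eqref{1.21}) is routine. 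Combined with local solvability from quasilinear parabolic theory and a continuity argument, this yields the global solution with $M(t)\le C|\delta_{0}|$, i.e. \eqref{2.2} for all $k$ with $j=0$; uniqueness follows from a standard energy estimate for the difference of two solutions.

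For the time derivatives $j\ge1$ I would not iterate again but differentiate the equation: $w_{t}=\mu w_{xx}+(N(w)w_{x})_{x}$ trades one time derivative for two spatial ones plus a quadratic term, so applying $\partial_{t}^{j-1}\partial_{x}^{k}$ and substituting repeatedly writes $\partial_{t}^{j}\partial_{x}^{k}w$ as a sum of $\partial_{x}^{k+2j}w$ and lower-order products of already-controlled spatial derivatives. Since $-\frac{2(k+2j)+1}{4}=-\frac{4j+2k+1}{4}$, and likewise for the $L^{1}$ and $L^{\infty}$ exponents, this reproduces \eqref{2.2}; pushing it to $k=3$ only requires running the spatial iteration in $H^{3}\cap W^{3,1}\cap W^{3,\infty}$, which the quasilinear structure permits. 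The main obstacle is exactly the nonlinear-diffusion term $(N(w)w_{x})_{x}$: at every derivative level one must verify that the products and commutators it generates carry an extra decaying factor — ultimately traceable to $N(0)=0$ and $\|w\|_{L^{\infty}}\to0$ — so that the Duhamel integral never overtakes the linear decay, and one must keep careful track of which of the $L^{1}$, $L^{2}$, $L^{\infty}$ norms is placed inside each convolution. That bookkeeping is the bulk of the work but is by now standard; alternatively one may simply cite \cite{Jiang-Zhu2009, Marcati-Mei-Rubino2005}, as the statement of the lemma does.
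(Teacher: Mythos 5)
Lemma~\ref{l2.1} is quoted in the paper from \cite{Jiang-Zhu2009, Marcati-Mei-Rubino2005} without an in-text proof, so your sketch is a reconstruction rather than something to match against a proof in this paper. The overall plan you lay out --- even reflection across $x=0$ to absorb the Neumann condition, writing the problem as $w_{t}=\mu w_{xx}+(N(w)w_{x})_{x}$ with $N(0)=0$ and $\mu=-p'(v_{+})/\alpha>0$, and a weighted Duhamel contraction exploiting the quadratic smallness of the nonlinearity --- is a sensible and standard route to \eqref{2.2}, and your exponent bookkeeping (in particular the identity $-\tfrac{2(k+2j)+1}{4}=-\tfrac{4j+2k+1}{4}$ after trading $\partial_{t}\leftrightarrow\partial_{x}^{2}$) is correct.

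There is, however, a concrete step that would fail as written and that you dismiss as "routine." At the top spatial order $k=3$ the Duhamel iteration does not close in the functional $M(t)$ you define. On the near interval $[t/2,t]$ one must estimate
\[
 \int_{t/2}^{t}\partial_{x}^{a}G_{\mu}(t-s)\ast\partial_{x}^{4-a}\big(N(w)w_{x}\big)(s)\,{\rm d}s
\]
for some split $0\le a\le 4$: if $a\le 1$ the nonlinear factor contains $\partial_{x}^{4}w$, which $M$ does not control, while if $a\ge 2$ the kernel factor $\|\partial_{x}^{a}G_{\mu}(t-s)\|_{L^{q}}\lesssim(t-s)^{-\frac{a}{2}-\frac{1}{2}(1-\frac{1}{q})}$ is non-integrable (or, at best, log-divergent when $a=2$, $q=1$) near $s=t$. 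The same obstruction recurs at the highest $j$ once one converts time derivatives to $\partial_{x}^{k+2j}$, and your appeal to "$N\in C^{3}$ by \eqref{1.21}" is also off by one (with $p\in C^{3}$ only $N=-\frac{1}{\alpha}(p'(v_{+}+\cdot)-p'(v_{+}))$ is $C^{2}$). The standard remedy --- and what one would expect to find on unpacking \cite{Jiang-Zhu2009, Marcati-Mei-Rubino2005} --- is not pure $L^{p}$ convolution at the top order. Either one closes the contraction only up to $k\le m-1$ and recovers $\partial_{x}^{m}w$ from a time-weighted $H^{m}$ energy identity, where the dissipation $-\mu\int|\partial_{x}^{m+1}w|^{2}$ has a good sign after the even reflection and absorbs the quadratic commutators; or one enlarges $M$ to higher orders and invokes parabolic smoothing for the top derivative. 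Both are available here because $\phi_{0}$ is a freely chosen $C_{0}^{\infty}$ function --- which is also what makes the assertion "for all $k,j\ge0$" meaningful --- but your sketch should say which of these it is doing. As written, "the quasilinear structure permits" papers over the needed estimate rather than supplying it.
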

\begin{remark}\label{r2.1}
It should be noted that from \eqref{2.1}, we can immediately obtain
 \begin{equation}\notag
|\delta_{0}|\leq C(\|v_{0}-v_{+}\|_{L^{1}}+|u_{+}|).
\end{equation}
\end{remark}
Next, from \eqref{1.9}, one can confirmed that the correction function $(\hat{v},\hat{u})(x,t)$ satisfies
\begin{lemma}\label{l2.2}
 Let $k, j$ be nonnegative integers and $p \in [1, \infty]$ is an integer, it holds that
 \begin{equation}\label{2.3}
  \begin{split}
    &\left\|\partial_{x}^{k} \partial_{t}^{j} \hat{v}(t)\right\|_{L^{p}}\leq C|u_{+}| {\rm e}^{-\alpha t}, \quad k \geq 0, j \geq 0,\\
    &\left\|\partial_{x}^{k} \partial_{t}^{j} \hat{u}(t)\right\|_{L^{p}}\leq C|u_{+}|{\rm e}^{-\alpha t}, \quad k \geq 1, j \geq 0,\\
    & \left\|\hat{u}(t)\right\|_{L^{\infty}}\leq |u_{+}|{\rm e}^{-\alpha t}.
    \end{split}
  \end{equation}
\end{lemma}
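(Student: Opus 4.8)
The plan is to compute each quantity directly from the explicit formulas \eqref{1.9} for $\hat v$ and $\hat u$, exploiting that the only $t$-dependence is the prefactor ${\rm e}^{-\alpha t}$ and that $m_0 \in C_0^\infty(\mathbb{R}^+)$ is a fixed, compactly supported, smooth function with unit integral. First I would record that since $\hat v(x,t) = -\frac{u_+}{\alpha}{\rm e}^{-\alpha t} m_0(x)$, every mixed derivative factors as $\partial_x^k\partial_t^j \hat v(x,t) = -\frac{u_+}{\alpha}(-\alpha)^j {\rm e}^{-\alpha t} m_0^{(k)}(x)$. Taking the $L^p$ norm in $x$ pulls out $\frac{|u_+|}{\alpha}\alpha^j {\rm e}^{-\alpha t}$ times $\|m_0^{(k)}\|_{L^p}$, which is a finite constant depending only on $m_0$ (hence absorbed into $C$), giving the first line of \eqref{2.3} for all $k,j\ge 0$ and all $p\in[1,\infty]$.

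Next, for $\hat u(x,t) = u_+{\rm e}^{-\alpha t}\int_0^x m_0(y)\,{\rm d}y$, I note $\partial_x^k\partial_t^j\hat u(x,t) = u_+(-\alpha)^j{\rm e}^{-\alpha t} m_0^{(k-1)}(x)$ for $k\ge 1$; this is again ${\rm e}^{-\alpha t}$ times a fixed $C_0^\infty$ function of $x$, so its $L^p$ norm is bounded by $C|u_+|{\rm e}^{-\alpha t}$ for every $k\ge 1$, $j\ge 0$, $p\in[1,\infty]$, which is the second line. The restriction $k\ge 1$ is essential here: $\hat u$ itself does not decay in $x$ (it tends to $u_+{\rm e}^{-\alpha t}$ as $x\to\infty$ because $\int_0^\infty m_0 = 1$), so it is not in $L^p$ for $p<\infty$; only its $x$-derivatives are compactly supported. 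For the third line, the $L^\infty$ bound on $\hat u$ itself, I would observe $|\hat u(x,t)| = |u_+|{\rm e}^{-\alpha t}\left|\int_0^x m_0(y)\,{\rm d}y\right| \le |u_+|{\rm e}^{-\alpha t}\int_0^\infty |m_0(y)|\,{\rm d}y$; if one additionally assumes $m_0 \ge 0$ (which may be imposed WLOG, or is implicit in the construction) then $\int_0^x m_0 \le \int_0^\infty m_0 = 1$ and the sharp constant $1$ follows, giving $\|\hat u(t)\|_{L^\infty}\le |u_+|{\rm e}^{-\alpha t}$.

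There is essentially no obstacle: the lemma is a direct consequence of the ansatz \eqref{1.9}, and the only point requiring a word of care is the third estimate, where the clean constant $1$ (rather than $\|m_0\|_{L^1}$) relies on the sign convention $m_0\ge 0$; if that is not assumed, the bound holds with a harmless constant $C$ in front, consistent with the first two lines. Everything else is just differentiating the prefactor ${\rm e}^{-\alpha t}$ (producing powers of $-\alpha$, harmless for fixed $\alpha$) and using that derivatives of the fixed bump $m_0$ have finite norms in every $L^p$.
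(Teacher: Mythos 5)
Your proof is correct and takes the only natural approach: direct computation from the explicit formulas \eqref{1.9}, which is exactly what the paper intends (it offers no separate proof, merely asserting that one can "confirm" these bounds from \eqref{1.9}). Your observation that the sharp constant $1$ in the third estimate $\|\hat u(t)\|_{L^\infty}\le|u_+|{\rm e}^{-\alpha t}$ requires $m_0\ge 0$ is a genuine (if minor) point the paper glosses over; with a general signed $m_0$ one only gets $\|m_0\|_{L^1}$ in place of $1$, and the normalization $\int_0^\infty m_0=1$ alone does not control $\sup_x\bigl|\int_0^x m_0\bigr|$.
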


Finally, we introduce the Sobolev inequation.
\begin{lemma}\label{l2.3}
 Let $f \in H^{1}$, then
 \begin{equation}\label{2.4}
  \|f\|_{L^{\infty}} \leq \sqrt{2}\|f\|^{\frac{1}{2}}\|f_{x}\|^{\frac{1}{2}} .
  \end{equation}
\end{lemma}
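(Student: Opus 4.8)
The statement to be proved is the one-dimensional Sobolev (Gagliardo--Nirenberg) inequality on the half-line, so the plan is the classical one: establish the bound first for a dense class of smooth functions and then pass to the limit. First I would take $f$ to be the restriction to $\mathbb{R}^{+}$ of a smooth, compactly supported function on $\mathbb{R}$, so that $f^{2}(y)\to 0$ as $y\to\infty$. For any $x\geq 0$ one then writes
\begin{equation}\notag
f^{2}(x)=-\int_{x}^{\infty}\frac{{\rm d}}{{\rm d}y}\big(f^{2}(y)\big)\,{\rm d}y=-2\int_{x}^{\infty}f(y)f^{\prime}(y)\,{\rm d}y,
\end{equation}
and estimates the right-hand side by the Cauchy--Schwarz inequality,
\begin{equation}\notag
f^{2}(x)\leq 2\int_{x}^{\infty}|f(y)|\,|f^{\prime}(y)|\,{\rm d}y\leq 2\Big(\int_{x}^{\infty}f^{2}(y)\,{\rm d}y\Big)^{\frac12}\Big(\int_{x}^{\infty}(f^{\prime}(y))^{2}\,{\rm d}y\Big)^{\frac12}\leq 2\|f\|\,\|f_{x}\|.
\end{equation}
Taking the supremum over $x\in\mathbb{R}^{+}$ and then the square root gives $\|f\|_{L^{\infty}}\leq\sqrt{2}\,\|f\|^{\frac12}\|f_{x}\|^{\frac12}$ for this class of $f$.

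Next I would remove the smoothness and support restrictions by density. Given an arbitrary $f\in H^{1}(\mathbb{R}^{+})$, choose a sequence $f_{n}$ of such smooth, eventually-zero functions with $f_{n}\to f$ in $H^{1}$. Applying the already-proven inequality to the differences $f_{n}-f_{m}$ shows that $(f_{n})$ is Cauchy in $L^{\infty}$, hence converges uniformly to a continuous function that must agree with $f$ almost everywhere; since the $L^{2}$-norm of $f_{n}$ converges to that of $f$ and likewise for the derivatives, passing to the limit in the inequality for $f_{n}$ yields \eqref{2.4} for $f$.

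There is essentially no serious obstacle here — this is a standard estimate — but the one point that warrants a line of care is the decay of $f^{2}$ at infinity in the general (non-compactly-supported) case, which is precisely why I argue by density rather than directly. Alternatively, one can avoid the approximation step by noting that for $f\in H^{1}(\mathbb{R}^{+})$ the function $f^{2}$ is absolutely continuous with $(f^{2})^{\prime}=2ff^{\prime}\in L^{1}(\mathbb{R}^{+})$ by Cauchy--Schwarz, so $f^{2}$ possesses a limit as $x\to\infty$, and that limit is forced to be $0$ because $f\in L^{2}(\mathbb{R}^{+})$; the displayed computation above then applies verbatim.
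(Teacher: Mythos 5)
Your proof is correct and is the standard argument for this one-dimensional Sobolev embedding on the half-line; the paper itself states Lemma~\ref{l2.3} without proof, treating it as a known inequality, so there is no alternative approach in the text to compare against. Both the density argument and the alternative you sketch (absolute continuity of $f^{2}$ with $(f^{2})'=2ff'\in L^{1}(\mathbb{R}^{+})$, forcing $f^{2}(x)\to 0$ as $x\to\infty$ since $f\in L^{2}$) are sound ways to justify the vanishing at infinity that the pointwise identity relies on.
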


\section{Proof of the main theorems}\label{s3}

In this section, we will devote ourselves to proof our main results: Theorem \ref{Thm1}--Theorem \ref{Thm2}. In the first subsection, we shall prove the global existence, uniqueness and time decay rates by deriving {\it a priori} estimates in the $L^{2}$-framework, and Theorem \ref{Thm1} is obtained. In the second subsection, we shall use the Green's method combined with the technical time-weighted energy estimates to obtain the improved decay rates \eqref{1.26}--\eqref{1.31} in Theorem \ref{Thm2}. In what follows, we can put $\alpha=1$ without loss of generality, and denote $g(u)$ and $f(v)$ by $g$ and $f$ respectively.

\subsection{Proof of Theorem \ref{Thm1}}\label{s3.1}

The main purpose of this subsection is to study global existence and uniqueness of solutions to \eqref{1.15} in the $L^{2}$-framework, and obtain \eqref{1.23}--\eqref{1.25}. It is well known that the global existence can be obtained by the continuation argument based on the local existence of solutions and {\it a priori} estimates. As for \eqref{1.15}, the local existence can be proved by the standard iteration method (cf. \cite{Matsumura1977}) and the details will be omitted here for brevity. In the next, we are devoted to establishing the following {\it a priori} estimates.
\begin{proposition}\label{p1}
Under the assumptions in Theorem \ref{Thm1}, there are positive constants $\varepsilon$ and $C$ such that if the smooth solution $(V,z)(x,t)$ to \eqref{1.15} on $0 \leq t \leq T$ for $T>0$ satisfies
\begin{equation}\label{3.1}
\begin{split}
 N(T):=\sup \limits_{0 \leq t \leq T}&\left\{\sum \limits_{k=0}^{3}(1+t)^{k}\|\partial_{x}^{k}V(t)\|^{2}+ \sum \limits_{k=0}^{2}(1+t)^{k+2}\|\partial_{x}^{k}z(t)\|^{2} \right. \\
 &~~~~~¡¤\left.+\sum \limits_{k=0}^{1}(1+t)^{k+4}\|\partial_{x}^{k}z_{t}(t)\|^{2}\right \} \leq \varepsilon^{2},
 \end{split}
\end{equation}
 then it holds that
\begin{align}\label{3.2}
 \sum_{k=0}^{3}&(1+t)^{k}\left\|\partial_{x}^{k} V(t)\right\|^{2}+\sum_{k=0}^{2}(1+t)^{k+2}\left\|\partial_{x}^{k} z(t)\right\|^{2} \nonumber\\
&+\int_{0}^{t}\left[\sum_{j=1}^{3}(1+s)^{j-1}\left\|\partial_{x}^{j} V(s)\right\|^{2}+\sum_{j=0}^{2}(1+s)^{j+1}\left\|\partial_{x}^{j} z(s)\right\|^{2}\right]{\rm d}s \nonumber\\
\leq&C\left(\left\|V_{0}\right\|_{3}^{2}+\left\|z_{0}\right\|_{2}^{2}+\delta\right),
 \end{align}
 and
 \begin{align}\label{3.3}
 (1+t)^{4}&\left\|z_{t}(t)\right\|^{2}+(1+t)^{5}(\left\|z_{xt}(t)\right\|^{2}+\left\|z_{tt}(t)\right\|^{2}) \nonumber\\
&+\int_{0}^{t}\left[(1+s)^{4}\left\|z_{xt}(s)\right\|^{2}+(1+s)^{5}\left\|z_{tt}(s)\right\|^{2}\right){\rm d}s \nonumber\\
\leq&C\left(\left\|V_{0}\right\|_{3}^{2}+\left\|z_{0}\right\|_{2}^{2}+\delta\right).
 \end{align}
\end{proposition}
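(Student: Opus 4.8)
The plan is to establish \eqref{3.2} and \eqref{3.3} by a time-weighted energy method applied to the damped wave formulation \eqref{1.16}, combined with the observation that $z=V_t$ solves a damped wave equation of the same structure. First I record the structural facts that make the boundary terms at $x=0$ harmless: since $V|_{x=0}=0$ for all $t$, we have $z|_{x=0}=V_t|_{x=0}=0$ and $z_t|_{x=0}=z_{tt}|_{x=0}=0$; by \eqref{1.6}, $\bar u|_{x=0}=0$ and $\bar v_x|_{x=0}=0$, so $p(\bar v)_x|_{x=0}=p(\bar v)_{xt}|_{x=0}=0$; and $\hat v|_{x=0}=\hat u|_{x=0}=0$ because $m_0\in C_0^\infty(\mathbb{R}^+)$. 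Together with $g(0)=g'(0)=0$ these give $F_1|_{x=0}=O(|V_x(0,t)|^2)$ and $F_2|_{x=0}=0$, which is exactly what is needed when spatial derivatives of \eqref{1.16} are tested against multipliers below.

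For the lowest-order estimate I multiply \eqref{1.16} by $V_t$ and, separately, by $V$, integrate over $\mathbb{R}^+$ and integrate by parts; the boundary contributions vanish since $V|_{x=0}=V_t|_{x=0}=0$, and $p'(\bar v)<0$ makes $\tfrac12\|V_t\|^2+\tfrac12\int|p'(\bar v)|V_x^2\,{\rm d}x$ a positive energy. Forming the combination of these two identities with a small multiple $\kappa_0$ of the second, and controlling the source terms via Lemma \ref{l2.1}, Remark \ref{r2.1} and Lemma \ref{l2.2} --- the linear part $\tfrac1\alpha p(\bar v)_{xt}$ of $F_1$ has $L^2$-norm $\le C|\delta_0|(1+t)^{-7/4}$, hence is integrable in $t$, while the remaining terms in $F_1,F_2$ are quadratic in $(V_x,V_t,\hat v,\hat u)$ and are absorbed using \eqref{3.1} or the exponential decay $e^{-\alpha t}$ --- one obtains after integration in time and absorption of small quantities
\begin{equation}\notag
\|V(t)\|_1^2+\|V_t(t)\|^2+\int_0^t\big(\|V_x(s)\|^2+\|V_t(s)\|^2\big)\,{\rm d}s\le C\big(\|V_0\|_1^2+\|z_0\|^2+\delta\big).
\end{equation}
There is no dissipation of $\|V\|^2$, consistently with \eqref{3.2}; $\|V(t)\|^2$ is merely kept bounded.

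Next, for $k=1,2,3$ I apply $\partial_x^k$ to \eqref{1.16} and test the result against $(1+t)^{m_k}\partial_x^kV_t$ and $(1+t)^{m_k}\partial_x^kV$ with the weights prescribed by \eqref{3.1}, forming the same kind of combination as above. Two points require care: the boundary terms at $x=0$ generated by the spatial derivatives are reduced, using \eqref{1.16} restricted to $x=0$ (where $V_t,V_{tt}$ and the linear source vanish), to quadratically small expressions times a trace bounded by Lemma \ref{l2.3}; and differentiating the weight creates a term $\sim m_k(1+t)^{m_k-1}\|\partial_x^k(\cdot)\|^2$ that is absorbed into the dissipation integral of the previous (lower-weight) order --- this matching is the induction that closes the hierarchy. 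Summing over $k$ and estimating all source contributions by $C\delta+C\sqrt{N(T)}\times(\text{dissipation})$ via Lemmas \ref{l2.1}--\ref{l2.3} and \eqref{3.1} yields \eqref{3.2}. Finally, differentiating $\eqref{1.15}_2$ in $t$ and using $V_t=z$, $V_{xt}=z_x$ shows that $z$ itself satisfies
\begin{equation}\notag
z_{tt}+\big(p'(\bar v)z_x\big)_x+\alpha z_t=(F_1+F_2)_t-\big(p''(\bar v)\bar v_tV_x\big)_x,\qquad z|_{x=0}=0,
\end{equation}
a damped wave equation of the same type whose right-hand side is, thanks to \eqref{3.2} and Lemmas \ref{l2.1}--\ref{l2.2}, one order faster decaying than $F_1+F_2$; repeating the weighted estimates above for this equation with the weights $(1+t)^4$ and $(1+t)^5$ produces \eqref{3.3}, the top-order term $\|z_{tt}\|$ being limited by the $H^2$-regularity of $z_0$.

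\textbf{The main obstacle} is the simultaneous bookkeeping in the weighted hierarchy: one must choose the multipliers and the cross terms so that every boundary term at $x=0$ either vanishes by the structural identities above or is quadratically small, and so that the loss from differentiating each time weight is exactly dominated by the dissipation furnished by the preceding order, so that the whole system of weighted inequalities closes in accordance with \eqref{3.1}. Keeping the source terms --- above all the slowly decaying linear part $\tfrac1\alpha p(\bar v)_{xt}$ of $F_1$ --- within the budget $O(\delta)$ at every weight level, together with the repeated use of the nonlinear smallness $\sqrt{N(T)}\le\varepsilon$, is the quantitative core of the argument.
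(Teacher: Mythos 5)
Your proposal follows essentially the same route as the paper: testing the damped wave formulation \eqref{1.16} (and its spatial/temporal derivatives) against $\partial_x^k V$ and $\partial_x^k V_t$, forming a small-multiple combination to produce a positive energy and a dissipation term, and then climbing a hierarchy of time weights so that the loss from differentiating each weight is absorbed by the lower-order dissipation integral; the source terms in $F_1,F_2$ are controlled exactly as you describe via Lemmas \ref{l2.1}--\ref{l2.3}, Remark \ref{r2.1}, the structural assumptions \eqref{1.21}--\eqref{1.22}, and the smallness $\sqrt{N(T)}\le\varepsilon$. Your reformulation of the $z$-estimates as energy estimates for the damped wave equation satisfied by $z=V_t$ is only a cosmetic re-phrasing of the paper's route, which differentiates $\eqref{1.16}_1$ in $t$ before testing against $V_{tt}$, $V_{xtt}$, $V_{ttt}$ (Lemmas \ref{l3.5}--\ref{l3.7}); the computations are identical. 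One minor imprecision: you state $F_1|_{x=0}=O(|V_x(0,t)|^2)$, but in fact \eqref{3.5} already gives $V_{xx}(0,t)=0$, which forces $F_1(0,t)=0$ exactly; your weaker claim still suffices for the boundary terms, so this does not affect the argument.
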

By applying \eqref{2.4} and  \eqref{3.1}, we have
\begin{equation}\label{3.4}
\begin{split}
 &\|\partial_{x}^{k}V(t)\|_{L^{\infty}} \leq \sqrt{2}\varepsilon (1+t)^{-\frac{1}{4}-\frac{k}{2}},\quad k=0,1,2,\\
 &\|\partial_{x}^{k}z(t)\|_{L^{\infty}} \leq \sqrt{2}\varepsilon (1+t)^{-\frac{5}{4}-\frac{k}{2}},\quad k=0,1,\\
 &\|z_{t}(t)\|_{L^{\infty}} \leq \sqrt{2}\varepsilon (1+t)^{-\frac{9}{4}},
 \end{split}
\end{equation}
which will be used later. From \eqref{1.16} and \eqref{1.22}, one can immediately obtain the following boundary conditions
\begin{equation}\label{3.5}
V(0,t)=V_{t}(0,t)=V_{xx}(0,t)=V_{xxt}(0,t)=0,~~~\mbox{etc}.
\end{equation}

Then we introduce the following lemma, which will play a key role in the proof of Proposition \ref{p1}.
\begin{lemma}\label{l3.1}
Assume that all the conditions in Proposition \ref{p1} hold, then it holds that
\begin{equation}\label{3.6a}
\left\{\begin{array}{l}
 |(g^\prime f)(x,t)|\leq C(\varepsilon+\delta)(1+t)^{-1},\\[2mm]
|(g^\prime f)_{x}(x,t)|\leq C(\varepsilon+\delta)(1+t)^{-\frac{3}{2}},~~~|(g^\prime f)_{t}(x,t)|\leq C(\varepsilon+\delta)(1+t)^{-2},\\[2mm]
|(g^\prime f)_{xx}(x,t)|\leq C|V_{xxt}(x,t)|+C(1+t)^{-1}|V_{xxx}(x,t)|+C(\varepsilon+\delta)(1+t)^{-2},\\[2mm]
|(g^\prime f)_{xt}(x,t)|\leq C|V_{xtt}(x,t)|+C(1+t)^{-1}|V_{xxt}(x,t)|+C(\varepsilon+\delta)(1+t)^{-\frac{5}{2}},\\[2mm]
|(g^\prime f)_{tt}(x,t)|\leq C|V_{ttt}(x,t)|+C(1+t)^{-1}|V_{xtt}(x,t)|+C(\varepsilon+\delta)(1+t)^{-3},
\end{array}
        \right.	
\end{equation}
and
\begin{equation}\label{3.6b}
\left\{\begin{array}{l}
 |(g f^\prime)(x,t)|\leq C(\varepsilon+\delta)(1+t)^{-2},\\[2mm]
|(g f^\prime)_{x}(x,t)|\leq C(\varepsilon+\delta)(1+t)^{-\frac{5}{2}},~~~|(g f^\prime)_{t}(x,t)|\leq C(\varepsilon+\delta)(1+t)^{-3},\\[2mm]
|(g f^\prime)_{xx}(x,t)|\leq C(1+t)^{-1}|V_{xxt}(x,t)|+C(1+t)^{-2}|V_{xxx}(x,t)|+C(\varepsilon+\delta)(1+t)^{-3},\\[2mm]
|(g f^\prime)_{xt}(x,t)|\leq C(1+t)^{-1}|V_{xtt}(x,t)|+C(1+t)^{-2}|V_{xxt}(x,t)|+C(\varepsilon+\delta)(1+t)^{-\frac{7}{2}},\\[2mm]
|(g f^\prime)_{tt}(x,t)|\leq C(1+t)^{-1}|V_{ttt}(x,t)|+C(1+t)^{-2}|V_{xtt}(x,t)|+C(\varepsilon+\delta)(1+t)^{-4}.
\end{array}
        \right.	
\end{equation}
\end{lemma}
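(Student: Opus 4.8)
The plan is to estimate each of the quantities $(g'f)$, $(g'f)_x$, $(g'f)_t$, etc., and their counterparts with $(gf')$, by expanding via the chain rule and then substituting the decay bounds already available. Recall that $g=g(u)$ with $u = z+\bar u+\hat u$, and $f=f(v)$ with $v = V_x+\bar v+\hat v$, and by \eqref{1.14} we have $z = V_t$, so $u = V_t + \bar u + \hat u$. The assumption \eqref{1.22} gives $g(0)=g'(0)=0$, hence $g'(u) = O(|u|)$ for $u$ small; since $u$ is small (by \eqref{3.4} for $z$, by Lemma \ref{l2.1} for $\bar u = -\tfrac1\alpha p(\bar v)_x$, and by Lemma \ref{l2.2} for $\hat u$), we get $|g'(u)| \le C|u| \le C(\varepsilon+\delta)(1+t)^{-1}$ — the $(1+t)^{-1}$ coming from the slowest-decaying piece of $u$, namely $\bar u$, which decays like $(1+t)^{-1}$ by \eqref{2.2} with $j=0,k=1$ (and $\hat u$, $z$ decay faster). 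Combined with $f(v)$ bounded (since $v$ stays near $v_+>0$), this gives the first line of \eqref{3.6a}. Similarly $|g(u)| \le C|u|^2$ gives the extra factor producing the $(1+t)^{-2}$ in the first line of \eqref{3.6b}, using that $|f'(v)|$ is bounded.

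Next I would differentiate. For a first $x$- or $t$-derivative of $g'(u)f(v)$ one gets $g''(u)(\partial u)f(v) + g'(u)f'(v)(\partial v)$. Here $\partial_x v = V_{xx} + \bar v_x + \hat v_x$ and $\partial_t v = V_{xt} + \bar v_t + \hat v_t$, while $\partial_x u = V_{xt} + \bar u_x + \hat u_x$ and $\partial_t u = V_{tt} + \bar u_t + \hat u_t = z_t + \bar u_t + \hat u_t$. All these are controlled: $V_{xx}, V_{xt}, V_{tt}(=z_t)$ by \eqref{3.4}, and the $\bar v, \bar u, \hat v, \hat u$ derivatives by Lemmas \ref{l2.1}–\ref{l2.2}. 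One counts powers: $g''(u)$ is bounded, $\partial u$ is at worst $O((1+t)^{-3/2})$ (from $\bar u_x \sim (1+t)^{-3/2}$ by \eqref{2.2}, beating $V_{xt}\sim(1+t)^{-7/4}$ and the exponentially small $\hat u_x$), and $g'(u)f'(v)\partial v$ contributes $g'(u) = O((1+t)^{-1})$ times $\partial v = O((1+t)^{-1/2})$ (the dominant term being $\bar v_x\sim(1+t)^{-1/2}$, while $V_{xx}$ also sits at $(1+t)^{-5/4}$ and is subsumed), again $(1+t)^{-3/2}$; the $t$-derivative is one power of $(1+t)^{-1/2}$ better because $\bar v_t, \bar u_t$ carry an extra $(1+t)^{-1}$, so one lands on $(1+t)^{-2}$. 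This gives the second line of \eqref{3.6a}, and \eqref{3.6b} is handled identically with one extra factor $g(u)/g'(u) \sim |u| \sim (1+t)^{-1}$ throughout.

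For the second derivatives ($xx$, $xt$, $tt$) the same expansion produces terms like $g'''(u)(\partial_1 u)(\partial_2 u)f$, $g''(u)(\partial_1\partial_2 u)f$, $g''(u)(\partial_1 u)f'(\partial_2 v)$, $g'(u)f''(\partial_1 v)(\partial_2 v)$, and $g'(u)f'(\partial_1\partial_2 v)$. Most of these are absorbed into the $C(\varepsilon+\delta)(1+t)^{-2}$ (resp. $(1+t)^{-5/2}$, $(1+t)^{-3}$) remainder by the same power-counting; the point is that the \emph{only} terms that cannot be so absorbed are those containing a \emph{third-order} derivative of $V$ — namely $V_{xxx}$, $V_{xxt}$, $V_{xtt}$, $V_{ttt}$ — which the a priori bound \eqref{3.1} does not yet control in $L^\infty$, so they must be kept explicit. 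For instance $\partial_{xx}(g'f)$ contains $g''(u)\,\partial_{xx}u\, f = g''(u)(V_{xxt}+\bar u_{xx}+\hat u_{xx})f$; the $V_{xxt}$ piece is displayed with coefficient $C$, the $\bar u_{xx}$ piece decays like $(1+t)^{-5/2}$ and goes into the remainder; and the term $g'(u)f'(v)\,\partial_{xx}v = g'(u)f'(v)(V_{xxx}+\cdots)$ yields $C|g'(u)|\,|V_{xxx}| \le C(1+t)^{-1}|V_{xxx}|$, exactly the displayed coefficient. The $xt$ and $tt$ lines are the same with $V_{xxt}$ replaced by $V_{xtt}$ or $V_{ttt}$ in the "honest coefficient $C$" slot and $V_{xxx}$ by $V_{xxt}$ or $V_{xtt}$ in the "$(1+t)^{-1}$ coefficient" slot, the extra $(1+t)^{-1}$ there again coming from the factor $g'(u)$. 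For \eqref{3.6b} every term additionally carries the factor $|u|\sim(1+t)^{-1}$, which is why the third-order-$V$ terms get coefficient $C(1+t)^{-1}$ and $C(1+t)^{-2}$ respectively and the remainders gain one power.

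The main obstacle is purely bookkeeping: one must enumerate every term of the multivariate chain rule up to second order, track which combination of $\{V\text{-derivatives},\ \bar v,\bar u,\hat v,\hat u\text{-derivatives}\}$ appears in each, and verify that after inserting \eqref{3.4} and \eqref{2.2}–\eqref{2.3} every term either decays fast enough to be swallowed by the stated remainder or is exactly one of the few third-order-in-$V$ terms singled out on the right-hand sides of \eqref{3.6a}–\eqref{3.6b}. No single estimate is hard; the care is in not dropping a term and in checking that the slowest decay in each group is the one recorded. I would organize the write-up by first listing $\partial u$, $\partial v$ and their second derivatives with their decay rates, then reading off each line of \eqref{3.6a} and \eqref{3.6b} in turn.
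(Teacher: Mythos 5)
Your approach matches the paper's proof essentially verbatim: expand $(g'f)$, $(gf')$ and their first and second derivatives by the chain rule, invoke the Taylor bounds $|g'(u)|\le C|u|$ and $|g(u)|\le C|u|^2$ coming from $g(0)=g'(0)=0$, and then substitute the pointwise decay rates for $u=V_t+\bar u+\hat u$, $v=V_x+\bar v+\hat v$ and their low-order derivatives from \eqref{2.2}--\eqref{2.4} and \eqref{3.4}, keeping only the third-order-in-$V$ terms explicit since the a priori bound does not control them in $L^\infty$. One small numerical slip: you quote $\|\bar v_x\|_{L^\infty}\sim(1+t)^{-1/2}$, but \eqref{2.2} gives the $L^\infty$ rate $(1+t)^{-1}$ (the $(1+t)^{-1/2}$ rate is the $L^1$ one); this only makes the $g'(u)f'(v)v_x$ contribution decay faster than you stated, so the dominant term is $g''(u)u_x f$ as you also computed and the stated bounds in \eqref{3.6a}--\eqref{3.6b} are unaffected.
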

\begin{proof}
By direct calculation, we can easily get from \eqref{1.21}--\eqref{1.22} and Taylor's expansion that
\begin{equation}\notag
  \begin{split}
    &\left|g^\prime f\right|\leq C|u|, ~~~~~ |(g f^\prime)|\leq C|u^{2}|, \\
    &\left|(g^\prime f)_{i}\right|\leq C(|u_{i}|+|uv_{i}|),~~~|(g f^\prime)_{i}|\leq C(|uu_{i}|+|u^{2}v_{i}|),\\
    &\left|\left(g^\prime f\right)_{ij}\right| \leq C(|u_{i}u_{j}|+|u_{ij}|+|u_{i}v_{j}|+|u_{j}v_{i}|+|uv_{i}v_{j}|+|uv_{ij}|),\\
    &\left|\left(g f^\prime\right)_{ij}\right| \leq C(|u_{i}u_{j}|+|uu_{ij}|+|uu_{i}v_{j}|+|uu_{j}v_{i}|+|u^{2}v_{i}v_{j}|+|u^{2}v_{ij}|),
    \end{split}
  \end{equation}
for $i,j=x ~\mbox{or}~ t$. Noticing that $u=V_{t}+\bar{u}+\hat{u}$ and $v=V_{x}+\bar{v}+\hat{v}$, by using $\eqref{1.6}_{2}$, \eqref{2.2}--\eqref{2.4} and \eqref{3.4}, one can immediately obtain \eqref{3.6a}--\eqref{3.6b}.
\end{proof}

With the obove preparations in hand, we now turn to prove Proposition \ref{p1}, which will be obtained through a series of lemmas.
\begin{lemma}\label{l3.2}
If $N(T) \leq \varepsilon^{2}$ and $\delta$ are sufficiently small, then it holds that
\begin{equation}\label{3.7}
\begin{split}
   \|V(t)\|^{2}+&(1+t)(\|V_{x}(t)\|^{2}+\|V_{t}(t)\|^{2})+\int_{0}^{t}(1+s)(\|V_{x}(s)\|^{2}+\|V_{t}(s)\|^{2}){\rm d}s\\
   & \leq  C\left(\|V_{0}\|^{2}_{1}+\|z_{0}\|_{1}^{2}+ \delta \right),
   \end{split}
  \end{equation}
  for $0 \leq t \leq T$.
\end{lemma}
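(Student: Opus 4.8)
The plan is to run a standard weighted energy estimate on the linearized wave-type system \eqref{1.15}--\eqref{1.16}. First I would establish the basic (unweighted) energy identity: multiply $\eqref{1.15}_2$ by $z$ and integrate over $\mathbb{R}^{+}$, using $\eqref{1.15}_1$ (i.e. $V_t=z$) to convert $\int (p'(\bar v)V_x)_x z\,{\rm d}x$ into $\frac{{\rm d}}{{\rm d}t}$ of $\frac12\int(-p'(\bar v))V_x^2\,{\rm d}x$ plus a remainder $\frac12\int p''(\bar v)\bar v_t V_x^2\,{\rm d}x$ which is harmless because $\|\bar v_t\|_{L^\infty}\le C|\delta_0|(1+t)^{-3/2}$ by \eqref{2.2}. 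The boundary term at $x=0$ vanishes since $V(0,t)=0\Rightarrow z(0,t)=V_t(0,t)=0$ by \eqref{3.5}. This yields
\[
\frac{{\rm d}}{{\rm d}t}\int_{\mathbb{R}^{+}}\Bigl(\tfrac12 z^2+\tfrac12(-p'(\bar v))V_x^2\Bigr){\rm d}x+\int_{\mathbb{R}^{+}}z^2\,{\rm d}x\le (\text{small})\int V_x^2\,{\rm d}x+\Bigl|\int(F_1+F_2)z\,{\rm d}x\Bigr|.
\]
Since $p'<0$ by \eqref{1.21}, the quadratic form $z^2+(-p'(\bar v))V_x^2$ is equivalent to $z^2+V_x^2$.

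Next I would bring in the dissipation of $V_x$ by the classical device of multiplying $\eqref{1.16}$ (the $V_{tt}+(p'(\bar v)V_x)_x+V_t=F_1+F_2$ form) by $V$ and integrating: this produces $\int(-p'(\bar v))V_x^2\,{\rm d}x$ with a favorable sign, at the cost of $\frac{{\rm d}}{{\rm d}t}$-terms $\int(V V_t+\frac12 V^2)\,{\rm d}x$ and the indefinite term $\int V_t^2\,{\rm d}x=\int z^2\,{\rm d}x$, which is already controlled by the first estimate. Forming the combination $E_1+\eta E_2$ for a small fixed $\eta>0$ gives a Lyapunov functional $\mathcal{E}(t)\sim \|V\|^2+\|V_x\|^2+\|z\|^2$ satisfying $\frac{{\rm d}}{{\rm d}t}\mathcal{E}+c(\|V_x\|^2+\|z\|^2)\le |\text{source terms}|$. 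Then I would perform the time-weighted version: multiply the $V_x$-dissipation relation by $(1+t)$ and absorb the extra $\int(1+t)^0(\cdots)$ terms generated by $\partial_t[(1+t)(\cdots)]$ into the unweighted estimate already in hand. This is exactly the structure of the claimed inequality \eqref{3.7}, where $\|V(t)\|^2$ is at weight $(1+t)^0$ and $\|V_x\|^2,\|V_t\|^2$ carry the weight $(1+t)^1$.

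The source terms are the main obstacle, though they are mild here. By Lemma \ref{l3.1}, $F_2=(gf)_x$ splits into pieces bounded (pointwise) by $C(\varepsilon+\delta)(1+t)^{-3/2}|V_x|$-type and lower-order quantities, and $F_1$ contains $\frac1\alpha p(\bar v)_{xt}$, which by \eqref{2.2} is in $L^2$ with norm $\le C|\delta_0|(1+t)^{-7/4}$, plus a quadratic-in-$V_x$ term $(p(\cdots)-p(\bar v)-p'(\bar v)V_x)_x$ that is $O((\varepsilon+\delta)|V_{xx}|+\cdots)$. I would estimate $\int(F_1+F_2)z\,{\rm d}x$ and $\int(F_1+F_2)V\,{\rm d}x$ by Cauchy--Schwarz, putting the genuinely quadratic perturbative terms (those carrying a factor $\varepsilon+\delta$) onto the left via smallness, and treating the linear-in-$\delta_0$ forcing (like $p(\bar v)_{xt}$) with Young's inequality so that it contributes $\int_0^t (1+s)\cdot\delta_0^2(1+s)^{-7/2}\,{\rm d}s\le C\delta$. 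Here one must be slightly careful: pairing the $O((1+t)^{-1}|V_{xx}|)$ piece of $(g'f)_{xx}$ etc. would need control of $\|V_{xx}\|$, but in this first lemma only $V,V_x,V_t=z$ appear in $F_1,F_2$ at zeroth/first derivative level, so no higher norm than those already controlled by $N(T)$ is needed; the term $V_{xx}$ enters only when we later differentiate the equation. Finally, integrating the differential inequality $\frac{{\rm d}}{{\rm d}t}[\mathcal{E}+(1+t)\mathcal{E}_w]+c(1+s)(\|V_x\|^2+\|z\|^2)\le C\delta(1+s)^{-3/2}+C(\varepsilon+\delta)(\cdots)$ in $t$ and using the initial bound $\mathcal{E}(0)\le C(\|V_0\|_1^2+\|z_0\|_1^2)$ yields \eqref{3.7}. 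The only place requiring genuine care is keeping track of which weight each generated term sits at so that every term produced by $\partial_t$ of a weighted quantity is dominated by a term already present at one lower weight — this bookkeeping, rather than any single estimate, is the crux.
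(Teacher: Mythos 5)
Your proposal takes essentially the same route as the paper: multiply the equation $V_{tt}+(p'(\bar v)V_x)_x+V_t=F_1+F_2$ once by $V$ and once by $V_t=z$, take a suitable linear combination $\lambda\cdot(\text{$V$-identity})+(\text{$V_t$-identity})$, estimate the source terms using Lemma~\ref{l3.1}, the decay estimates of Lemma~\ref{l2.1}--\ref{l2.2}, and the $L^\infty$ bounds from the a priori bound $N(T)\le\varepsilon^2$, then first integrate the unweighted inequality to get $\|V\|_1^2+\|z\|^2+\int_0^t(\|V_x\|^2+\|z\|^2)\,{\rm d}s\le C$ and then multiply the $V_t$-weighted identity by $(1+t)$ and absorb the term generated by $\partial_t$ hitting the weight into the unweighted bound. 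This is exactly the paper's structure (equations \eqref{3.8}--\eqref{3.25}).

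One small inaccuracy is worth flagging: you assert that only $V, V_x, V_t$ appear in $F_1, F_2$ and that ``$V_{xx}$ enters only when we later differentiate the equation.'' This is not quite right — $F_2=(g(u)f(v))_x$ with $v=V_x+\bar v+\hat v$ already produces a $gf'V_{xx}$ term, and the Taylor remainder piece of $F_1$ likewise produces a $V_{xx}$ factor after the outer $\partial_x$ is expanded. The paper handles these either by integrating the $\partial_x$ off onto the test function (as in its estimate of $\int F_1 V\,{\rm d}x$, equation \eqref{3.9}) or by bounding the offending products in $L^\infty$ via the a priori assumption (as in $I_3$, equation \eqref{3.14}, where $gf'V_{xx}V$ is controlled through $\|\hat u\|_{L^\infty},\|V_t\|_{L^\infty}$, etc.). Your final sentence about the bookkeeping being the real crux is apt; the correct statement is simply that whatever copies of $V_{xx},V_{xt}$ arise can be placed in $L^\infty$ with favorable decay from $N(T)\le\varepsilon^2$, not that they never appear.
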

\begin{proof}
Firstly, multiplying $\eqref{1.16}_{1}$ by $V$ and integrating it with respect to $x$ over $\mathbb{R}^{+}$, we can get
 \begin{align}\label{3.8}
   \frac{{\rm d}}{{\rm d}t}\int_{0}^{\infty} \left(\frac{V^{2}}{2}+VV_{t}\right){\rm d} x-\int_{0}^{\infty} p^\prime(\bar{v})V_{x}^{2}{\rm d}x =\int_{0}^{\infty}V_{t}^{2}{\rm d}x+\int_{0}^{\infty}F_{1}V{\rm d}x+\int_{0}^{\infty}F_{2}V{\rm d} x .
 \end{align}
 The right hand side of \eqref{3.8} can be estimated as follows. From \eqref{2.2}--\eqref{2.4} and \eqref{3.4}--\eqref{3.5}, we have
 \begin{equation}\label{3.9}
  \begin{split}
   \int_{0}^{\infty} F_{1}V{\rm d} x &= \int_{0}^{\infty}\left[-p(\bar{v})_{t}+p(V_{x}+\bar{v}+\hat{v})-p(\bar{v})-p^\prime(\bar{v})V_{x}\right]V_{x}{\rm d} x \\&
   \leq -\frac{p^\prime(\bar{v})}{16} \|V_{x}(t)\|^{2}+C\delta(1+t)^{-\frac{5}{2}}.
   \end{split}
  \end{equation}
  Noticing that
\begin{equation}\label{3.10}
   F_{2}= g^\prime f(V_{xt}-p(\bar{v})_{xx}+\hat{v}_{t})+gf^\prime(V_{xx}+\bar{v}_{x}+\hat{v}_{x}),
  \end{equation}
  then
\begin{align}\label{3.11}
   \int_{0}^{\infty}F_{2}V{\rm d}x=&\int_{0}^{\infty}[g^\prime f(V_{xt}-p(\bar{v})_{xx}+\hat{v}_{t})+gf^\prime(V_{xx}+\bar{v}_{x}+\hat{v}_{x})]V{\rm d}x\nonumber\\
   =&\int_{0}^{\infty}g^\prime fV_{xt}V{\rm d}x+\int_{0}^{\infty}g^\prime f(-p(\bar{v})_{xx}+\hat{v}_{t})V{\rm d}x+\int_{0}^{\infty}gf^\prime(V_{xx}+\bar{v}_{x}+\hat{v}_{x})V{\rm d}x\nonumber\\
   :=&I_{1}+I_{2}+I_{3}.
  \end{align}
  From \eqref{1.21}--\eqref{1.22}, \eqref{2.2}--\eqref{2.4}, \eqref{3.4}--\eqref{3.5} and Taylor's expansion, we can deduce that
  \begin{align}\label{3.12}
   I_{1}=&\int_{0}^{\infty}g^\prime fV_{xt}V{\rm d}x\nonumber\\
   =&-\int_{0}^{\infty}g^\prime fV_{t}V_{x}{\rm d}x-\int_{0}^{\infty}VV_{t}[g^{\prime\prime} f(V_{xt}-p(\bar{v})_{xx}+\hat{v}_{t})+g^\prime f^\prime(V_{xx}+\bar{v}_{x}+\hat{v}_{x})]{\rm d}x\nonumber\\
   \leq&C\int_{0}^{\infty}|V_{t}V_{x}|{\rm d}x+\frac{1}{2}\int_{0}^{\infty}V_{t}^{2}(Vg^{\prime\prime}f)_{x}{\rm d}x+C\int_{0}^{\infty}|VV_{t}|(\bar{v}_{x}^{2}+|\bar{v}_{xx}|+|\hat{v}_{t}|){\rm d}x \nonumber\\
   &+C\int_{0}^{\infty}|VV_{t}|(|V_{t}|+|\bar{v}_{x}|+|\hat{u}|)(|V_{xx}|+|\bar{v}_{x}|+|\hat{v}_{x}|){\rm d}x \nonumber\\
   \leq& -\frac{p^\prime(\bar{v})}{32}\int_{0}^{\infty}V_{x}^{2}{\rm d}x+C\int_{0}^{\infty}V_{t}^{2}{\rm d}x+C\|V(t)\|_{L^{\infty}}^{2}\int_{0}^{\infty}(\bar{v}_{x}^{4}+\bar{v}_{xx}^{2}+\hat{v}_{t}^{2}){\rm d}x\nonumber\\
   &+C\|VV_{xx}(t)\|_{L^{\infty}}^{2}\int_{0}^{\infty}\bar{v}_{x}^{2}{\rm d}x+C\int_{0}^{\infty}\hat{v}_{x}^{2}{\rm d}x+\|\hat{u}(t)\|_{L^{\infty}}^{2}\int_{0}^{\infty}(V_{xx}^{2}+\bar{v}_{x}^{2}){\rm d}x\nonumber\\
   \leq& -\frac{p^\prime(\bar{v})}{16}\|V_{x}(t)\|^{2} +C\|V_{t}(t)\|^{2}+ C\delta (1+t)^{-3},
  \end{align}
  \begin{align}\label{3.13}
   I_{2}=&\int_{0}^{\infty}g^\prime f(-p(\bar{v})_{xx}+\hat{v}_{t})V{\rm d}x\nonumber\\
   \leq&C\int_{0}^{\infty}(|V_{t}|+|\bar{v}_{x}|+|\hat{u}|)(\bar{v}_{x}^{2}+|\bar{v}_{xx}|+|\hat{v}_{t}|)|V|{\rm d}x\nonumber\\
   \leq&\|V_{t}(t)\|^{2}+C\delta (1+t)^{-3}\|V(t)\|^{2}+C\delta (1+t)^{-\frac{3}{2}}\|\bar{v}_{x}(t)\|\|V(t)\|\nonumber\\
   &+C\|\hat{u}(t)\|_{L^{\infty}}(\|\bar{v}_{x}(t)\|^{2}+\|\bar{v}_{xx}(t)\|_{L^{1}}+\|\hat{v}_{t}(t)\|_{L^{1}})\nonumber\\
   \leq&\|V_{t}(t)\|^{2}+C\delta (1+t)^{-\frac{9}{4}},
  \end{align}
  and
\begin{align}\label{3.14}
    &\int_{0}^{\infty}gf^\prime(V_{xx}+\bar{v}_{x}+\hat{v}_{x})V{\rm d}x\nonumber\\
    \leq&C\int_{0}^{\infty}(V_{t}-p(\bar{v})_{x}+\hat{u})^{2}(|V_{xx}|+|\bar{v}_{x}|+|\hat{v}_{x}|)|V|{\rm d}x\nonumber\\
    \leq&C\|V_{t}(t)\|^{2}+C\int_{0}^{\infty}[|V_{t}|(|\bar{v}_{x}|+|\hat{u}|)+|\bar{v}_{x}|^{2}+|\bar{v}_{x}||\hat{u}|](|V_{xx}|+|\bar{v}_{x}|+|\hat{v}_{x}|)|V|{\rm d} x \nonumber\\
  &+C\|\hat{u}(t)\|_{L^{\infty}}^{2}\int_{0}^{\infty}|V_{xx}V|{\rm d} x+C\|\hat{u}(t)\|_{L^{\infty}}^{2}\int_{0}^{\infty}|V|(|\bar{v}_{x}|+|\hat{v}_{x}|){\rm d} x\nonumber\\
  \leq& C\|V_{t}(t)\|^{2}+C\delta (1+t)^{-4}+C\delta (1+t)^{-2}(\|\bar{v}_{x}(t)\|+\|V_{xx}(t)\|)\|V(t)\|\nonumber\\
  \leq& C\|V_{t}(t)\|^{2}+ C\delta (1+t)^{-\frac{11}{4}}.
  \end{align}
  Inserting \eqref{3.9} and \eqref{3.11}--\eqref{3.14} into \eqref{3.8}, we can conclude that
\begin{align}\label{3.15}
   \frac{1}{2}\frac{{\rm d}}{{\rm d}t}\int_{0}^{\infty} \left(V^{2}+2VV_{t}\right){\rm d} x-\frac{3}{4}\int_{0}^{\infty} p^\prime(\bar{v})V_{x}^{2}{\rm d}x \leq C\|V_{t}(t)\|^{2}+ C\delta (1+t)^{-\frac{9}{4}}.
 \end{align}
Next, multiplying $\eqref{1.16}_{1}$ by $V_{t}$ and integrating it with respect to $x$ over $\mathbb{R}^{+}$, we have
\begin{equation}\label{3.16}
  \begin{split}
  \frac{{\rm d}}{{\rm d}t}&\int_{0}^{\infty} \left(\frac{V_{t}^{2}}{2}-\frac{p^\prime(\bar{v})}{2}V_{x}^{2}\right){\rm d}x+\int_{0}^{\infty}V_{t}^{2}{\rm d}x= -\int_{0}^{\infty}\frac{p^{\prime\prime}(\bar{v})\bar{v}_{t}V_{x}^{2}}{2}{\rm d} x+\int_{0}^{\infty}F_{1}V_{t}{\rm d}x+\int_{0}^{\infty}F_{2}V_{t}{\rm d}x.
  \end{split}
  \end{equation}
The right hand side of \eqref{3.16} can be estimated as follows. From \eqref{2.2}--\eqref{2.4} and \eqref{3.4}--\eqref{3.5}, we get
\begin{equation}\label{3.17}
   -\int_{0}^{\infty} \frac{p^{\prime\prime}(\bar{v})\bar{v}_{t}V_{x}^{2}}{2}{\rm d}x \leq C\delta (1+t)^{-\frac{3}{2}}\|V_{x}(t)\|^{2},	
   \end{equation}
   and
\begin{align}\label{3.18}
    \int_{0}^{\infty} F_{1}V_{t}{\rm d}x=&\int_{0}^{\infty} p(\bar{v})_{xt}V_{t}{\rm d} x+ \frac{{\rm d}}{{\rm d}t}\int_{0}^{\infty}\left[\int_{\bar{v}}^{V_{x}+\bar{v}+\hat{v}}p(s){\rm d}s-p(\bar{v})V_{x}-\frac{p^\prime(\bar{v})V_{x}^{2}}{2}\right]{\rm d}x \nonumber\\&
    +\int_{0}^{\infty}\left(-p(V_{x}+\bar{v}+\hat{v})+p(\bar{v})+p^\prime(\bar{v})V_{x}+\frac{p^{\prime\prime}(\bar{v})}{2}V_{x}^{2}\right)\bar{v}_{t}{\rm d} x\nonumber\\
    &-\int_{0}^{\infty}p(V_{x}+\bar{v}+\hat{v})\hat{v}_{t}{\rm d} x\nonumber\\
    \leq &\frac{{\rm d}}{{\rm d}t}\int_{0}^{\infty}\left[\int_{\bar{v}}^{V_{x}+\bar{v}+\hat{v}}p(s){\rm d}s-p(\bar{v})V_{x}-\frac{p^\prime(\bar{v})V_{x}^{2}}{2}\right]{\rm d}x\nonumber\\&
    +\frac{1}{16}\|V_{t}(t)\|^{2}+C(\varepsilon +\delta) (1+t)^{-\frac{3}{2}} \|V_{x}(t)\|^{2}+C\delta (1+t)^{-\frac{7}{2}}.
  \end{align}
Now we turn to deal with the last term of the righthand side of \eqref{3.18}. Notice that
  \begin{align}\label{3.19}
   \int_{0}^{\infty}F_{2}V_{t}{\rm d}x=&\int_{0}^{\infty}g^\prime f(V_{xt}-p(\bar{v})_{xx}+\hat{v}_{t})V_{t}{\rm d}x+\int_{0}^{\infty}gf^\prime(V_{xx}+\bar{v}_{x}+\hat{v}_{x})V_{t}{\rm d}x\nonumber\\
   :=&I_{4}+I_{5}.
  \end{align}
  From \eqref{1.21}--\eqref{1.22}, \eqref{2.2}--\eqref{2.4}, \eqref{3.4}--\eqref{3.5} and Taylor's expansion, we derive
  \begin{align}\label{3.20}
   I_{4}=&\int_{0}^{\infty}g^\prime f(V_{xt}-p(\bar{v})_{xx}+\hat{v}_{t})V_{t}{\rm d}x\nonumber\\
   \leq&\int_{0}^{\infty}g^\prime fV_{xt}V_{t}{\rm d}x+C\int_{0}^{\infty}(|V_{t}|+|\bar{v}_{x}|+|\hat{u}|)(\bar{v}_{x}^{2}+|\bar{v}_{xx}|+|\hat{v}_{t}|)|V_{t}|{\rm d}x\nonumber\\
   \leq&-\frac{1}{2}\int_{0}^{\infty}(g^\prime f)_{x}V_{t}^{2}{\rm d}x+\frac{1}{32}\int_{0}^{\infty}V_{t}^{2}{\rm d}x+C(1+t)^{-2}\int_{0}^{\infty}(\bar{v}_{x}^{4}+\bar{v}_{xx}^{2}+\hat{v}_{t}^{2}){\rm d}x\nonumber\\
   \leq&\frac{1}{16}\|V_{t}(t)\|^{2}+C\delta(1+t)^{-\frac{9}{2}},
  \end{align}
  and
  \begin{align}\label{3.21}
    I_{5}=&\int_{0}^{\infty}gf^\prime(V_{xx}+\bar{v}_{x}+\hat{v}_{x})V_{t}{\rm d}x\nonumber\\
    \leq&C\int_{0}^{\infty}(V_{t}-p(\bar{v})_{x}+\hat{u})^{2}(|V_{xx}|+|\bar{v}_{x}|+|\hat{v}_{x}|)|V_{t}|{\rm d}x\nonumber\\
    \leq&C(\varepsilon +\delta)\int_{0}^{\infty}V_{t}^{2}{\rm d}x+C\int_{0}^{\infty}(|\bar{v}_{x}|^{2}+|\bar{v}_{x}||\hat{u}|)(|V_{xx}|+|\bar{v}_{x}|+|\hat{v}_{x}|)|V_{t}|{\rm d} x \nonumber\\
  &+C\|\hat{u}(t)\|_{L^{\infty}}^{2}\int_{0}^{\infty}|V_{xx}V_{t}|{\rm d} x+C\|\hat{u}(t)\|_{L^{\infty}}^{2}\int_{0}^{\infty}|V_{t}|(|\bar{v}_{x}|+|\hat{v}_{x}|){\rm d} x\nonumber\\
  \leq&\frac{1}{16}\|V_{t}(t)\|^{2}+C\delta (1+t)^{-\frac{11}{2}}.
  \end{align}
  Substituting \eqref{3.17}--\eqref{3.21} into \eqref{3.16} deduce
\begin{equation}\label{3.22}
  \begin{split}
  &\frac{1}{2}\frac{{\rm d}}{{\rm d}t}\int_{0}^{\infty} \left(V_{t}^{2}-p^\prime(\bar{v})V_{x}^{2}\right){\rm d}x+\frac{3}{4}\int_{0}^{\infty}V_{t}^{2}{\rm d}x\\
  \leq&\frac{{\rm d}}{{\rm d}t}\int_{0}^{\infty}\left[\int_{\bar{v}}^{V_{x}+\bar{v}+\hat{v}}p(s){\rm d}s-p(\bar{v})V_{x}-\frac{p^\prime(\bar{v})V_{x}^{2}}{2}\right]{\rm d}x+C(\varepsilon +\delta) (1+t)^{-\frac{3}{2}} \|V_{x}(t)\|^{2}\\
  &+C\delta (1+t)^{-\frac{7}{2}}.
  \end{split}
  \end{equation}
Choosing $\lambda>0$ suitably small, we have from $\lambda \cdot \eqref{3.15}+\eqref{3.22}$ that
  \begin{align}\label{3.23}
    &\frac{1}{2}\frac{{\rm d}}{{\rm d}t}\int_{0}^{\infty} \left(V_{t}^{2}+\lambda V^{2}+2\lambda VV_{t}-p^\prime(\bar{v})V_{x}^{2}\right){\rm d} x+\frac{1}{2}\int_{0}^{\infty}(V_{t}^{2}-\lambda p^\prime(\bar{v})V_{x}^{2}){\rm d}x\nonumber\\
     \leq&\frac{{\rm d}}{{\rm d}t}\int_{0}^{\infty}\left[\int_{\bar{v}}^{V_{x}+\bar{v}+\hat{v}}p(s){\rm d}s-p(\bar{v})V_{x}-\frac{p^\prime(\bar{v})}{2}V_{x}^{2}\right]{\rm d}x+C\delta (1+t)^{-\frac{9}{4}}.
  \end{align}
  Integrating \eqref{3.23} respect to $t$ over $[0,t]$, we obtain
\begin{equation}\label{3.24}
    \|V(t)\|_{1}^{2}+\|V_{t}(t)\|^{2}+\int_{0}^{t}(\|V_{x}(s)\|^{2}+\|V_{t}(s)\|^{2}){\rm d}s\leq  C\left(\|V_{0}\|^{2}_{1}+\|z_{0}\|^{2}+ \delta \right).
  \end{equation}
  Multiplying \eqref{3.22} by $(1+t)$ and integrating by parts, one gets
 \begin{align}\label{3.25}
   &\frac{1}{2}\frac{{\rm d}}{{\rm d}t}\int_{0}^{\infty}(1+t)\left(V_{t}^{2}-p^\prime(\bar{v})V_{x}^{2}\right){\rm d} x+\frac{3}{4}\int_{0}^{\infty}(1+t)V_{t}^{2}{\rm d}x\nonumber\\
   \leq& \frac{{\rm d}}{{\rm d}t}(1+t)\int_{0}^{\infty}\left[\int_{\bar{v}}^{V_{x}+\bar{v}+\hat{v}}p(s){\rm d}s-p(\bar{v})V_{x}-\frac{p^\prime(\bar{v})}{2}V_{x}^{2}\right]{\rm d}x+C(\|V_{x}(t)\|^{2}+\|V_{t}(t)\|^{2})\nonumber\\
   &+C\delta (1+t)^{-\frac{5}{2}}.
  \end{align}
Integrating the above inequality in $t$ over $[0, t]$ and using \eqref{3.24}, one can immediately get \eqref{3.7}. This completes the proof of
Lemma \ref{l3.2}.
\end{proof}
\begin{lemma}\label{l3.3}
If $N(T) \leq \varepsilon^{2}$ and $\delta$ are sufficiently small, then it holds that
 \begin{equation}\label{3.26}
  \begin{split}
   (1+t)^{2}&(\|V_{xx}(t)\|^{2}+\|V_{xt}(t)\|^{2})+\int_{0}^{t}\left[(1+s)\|V_{xx}(s)\|^{2}+(1+s)^{2}\|V_{xt}(s)\|^{2}\right]{\rm d}s \\&
   \leq  C\left(\|V_{0}\|^{2}_{2}+\|z_{0}\|_{1}^{2}+ \delta\right),
   \end{split}
  \end{equation}
  for $0 \leq t \leq T$.
\end{lemma}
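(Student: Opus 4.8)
The plan is to follow the energy method of Lemma~\ref{l3.2}, carried out one derivative higher; the key structural point is that the dissipation available for $V_{xt}$ is stronger than that for $V_{xx}$, which is why the integral weights in \eqref{3.26} are asymmetric. First I would differentiate $\eqref{1.16}_{1}$ once in $x$ to get $V_{xtt}+(p^\prime(\bar{v})V_{x})_{xx}+V_{xt}=F_{1x}+F_{2x}$. Multiplying this by $V_{xt}$, integrating over $\mathbb{R}^{+}$, and integrating by parts — all boundary contributions at $x=0$ dropping because $V_{xx}(0,t)=\bar{v}_{x}(0,t)=0$ and $F_{1}(0,t)=F_{2}(0,t)=0$, which follow from \eqref{3.5}, \eqref{1.6}, \eqref{1.22} (so that $g(0)=g^\prime(0)=0$ there, since $u(0,t)=0$) and the evenness of $\bar{v}$ about the boundary — one obtains
\[
\tfrac12\frac{{\rm d}}{{\rm d}t}\Big(\|V_{xt}\|^{2}-\int_{0}^{\infty}p^\prime(\bar{v})V_{xx}^{2}\,{\rm d}x\Big)+\|V_{xt}\|^{2}=\text{(good terms)},
\]
so that $\|V_{xx}\|^{2}$ sits inside the energy while $\|V_{xt}\|^{2}$ is dissipated. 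Multiplying the \emph{undifferentiated} equation $\eqref{1.16}_{1}$ by $-V_{xx}$ and integrating similarly gives
\[
\frac{{\rm d}}{{\rm d}t}\Big(\int_{0}^{\infty}V_{x}V_{xt}\,{\rm d}x+\tfrac12\|V_{x}\|^{2}\Big)+\int_{0}^{\infty}\big(-p^\prime(\bar{v})\big)V_{xx}^{2}\,{\rm d}x=\|V_{xt}\|^{2}+\text{(good terms)},
\]
which supplies the missing $\|V_{xx}\|^{2}$ dissipation.

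Adding a small multiple $\lambda$ of the second identity to the first produces a combined inequality whose energy is equivalent to $\|V_{xx}\|^{2}+\|V_{xt}\|^{2}$ (plus the harmless $\lambda\|V_{x}\|^{2}$) and whose dissipation is $\|V_{xx}\|^{2}+\|V_{xt}\|^{2}$ (the $\lambda\|V_{xt}\|^{2}$ on the right being absorbed for $\lambda<1$). The remaining error terms fall into three classes: (a) terms carrying a prefactor $p^{\prime\prime}(\bar{v})\bar{v}_{t}$, $p^{\prime\prime}(\bar{v})\bar{v}_{x}$, $(g^\prime f)$, $(g^\prime f)_{x}$, $(gf^\prime)$ or $(gf^\prime)_{x}$, which by Lemma~\ref{l2.1} and Lemma~\ref{l3.1} carry a time-decay $(1+t)^{-\theta}$ with $\theta\ge1$ and are absorbed into the dissipation after using \eqref{3.4} and the smallness of $\varepsilon+\delta$; (b) the genuine sources $p(\bar{v})_{xt}$ and $p(\bar{v})_{xxt}$, which are $\le C\delta(1+t)^{-7/4}$ and $\le C\delta(1+t)^{-9/4}$ in $L^{2}$ by \eqref{2.2}; and (c) the top-order pieces $g^\prime f\,V_{xxt}$ and $gf^\prime\,V_{xxx}$ arising from $F_{2x}$ (and the analogous $V_{xxx}$-term in $F_{1x}$), treated by integrating by parts in $x$, e.g. $\int_{0}^{\infty}g^\prime f\,V_{xxt}V_{xt}\,{\rm d}x=-\tfrac12\int_{0}^{\infty}(g^\prime f)_{x}V_{xt}^{2}\,{\rm d}x$ (boundary term vanishing since $g^\prime(0)=0$), together with Cauchy--Schwarz and the a~priori bound $\|V_{xxx}\|\le C\varepsilon(1+t)^{-3/2}$ from \eqref{3.1}. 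Integrating the combined inequality in $t$ and invoking Lemma~\ref{l3.2} to handle $\int_{0}^{t}(\|V_{x}\|^{2}+\|V_{t}\|^{2})\,{\rm d}s$ gives the base estimate $\|V_{xx}(t)\|^{2}+\|V_{xt}(t)\|^{2}+\int_{0}^{t}(\|V_{xx}\|^{2}+\|V_{xt}\|^{2})\,{\rm d}s\le C(\|V_{0}\|_{2}^{2}+\|z_{0}\|_{1}^{2}+\delta)$.

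Next I would run the weighted iteration. Multiplying the combined inequality by $(1+t)$, integrating, and using the base estimate to dominate the weight-derivative term $\|V_{xx}\|^{2}+\|V_{xt}\|^{2}$ (the weighted error terms staying integrable, the marginal ones absorbed by the dissipation) yields
\[
(1+t)\big(\|V_{xx}(t)\|^{2}+\|V_{xt}(t)\|^{2}\big)+\int_{0}^{t}(1+s)\big(\|V_{xx}\|^{2}+\|V_{xt}\|^{2}\big)\,{\rm d}s\le C\big(\|V_{0}\|_{2}^{2}+\|z_{0}\|_{1}^{2}+\delta\big),
\]
which in particular controls $\int_{0}^{t}(1+s)\|V_{xx}\|^{2}\,{\rm d}s$. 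Finally, multiplying only the first identity by $(1+t)^{2}$ and integrating — its weight-derivative term $(1+s)\big(\|V_{xt}\|^{2}+\|V_{xx}\|^{2}\big)$ being dominated by the previous step, and the weighted errors still integrable — and using that $\|V_{xx}\|^{2}$ lives inside that identity's energy, one arrives at $(1+t)^{2}(\|V_{xx}(t)\|^{2}+\|V_{xt}(t)\|^{2})+\int_{0}^{t}(1+s)^{2}\|V_{xt}\|^{2}\,{\rm d}s\le C(\|V_{0}\|_{2}^{2}+\|z_{0}\|_{1}^{2}+\delta)$. Combining the last two displays gives \eqref{3.26}.

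I expect the main obstacle to be item (c): differentiating the quadratic flux $(g(u)f(v))_{x}$ once more brings in the third-order quantities $V_{xxt}$ and $V_{xxx}$, which are not controlled by the energy at this level, so one must integrate by parts to shift those derivatives off them — precisely where the boundary facts $u(0,t)=0$, $V_{xx}(0,t)=0$ and $\bar{v}_{x}(0,t)=0$ are needed — and then close the estimate using the a~priori decay of $V_{xxx}$ contained in $N(T)$ and the Lemma~\ref{l3.1} bounds on $(g^\prime f)_{x}$, $(gf^\prime)_{x}$, with the smallness of $\varepsilon+\delta$ covering the remaining slack. The secondary nuisance is bookkeeping: every error term's decay exponent must be tracked so that it survives multiplication by the weights $(1+t)$ and $(1+t)^{2}$.
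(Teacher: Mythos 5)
Your proposal is correct and follows essentially the same route as the paper: multiply $\eqref{1.16}_{1}$ by $-V_{xx}$ and $\partial_{x}\eqref{1.16}_{1}$ by $V_{xt}$, form the $\lambda$-combination so that the $V_{xt}$-dissipation and the $V_{xx}$-dissipation reinforce each other, handle the third-order pieces $g^{\prime}fV_{xxt}$ and $gf^{\prime}V_{xxx}$ in $F_{2x}$ by one more integration by parts using Lemma~\ref{l3.1} and the $N(T)$ bound, and then iterate the time weights $(1+t)^{0},(1+t)^{1}$ on the combined inequality followed by $(1+t)^{2}$ on the $V_{xt}$-identity alone. This matches the paper's \eqref{3.27}--\eqref{3.39} step for step, including the asymmetric weighting that yields $\int_{0}^{t}(1+s)\|V_{xx}\|^{2}\,{\rm d}s$ but $\int_{0}^{t}(1+s)^{2}\|V_{xt}\|^{2}\,{\rm d}s$.
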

\begin{proof}
Multiplying $\eqref{1.16}_{1}$ by $-V_{xx}$ and integrating it with respect to $x$ over $\mathbb{R}^{+}$, we have
  \begin{equation}\label{3.27}
  \begin{split}
  \frac{1}{2}\frac{{\rm d}}{{\rm d}t}\int_{0}^{\infty} \left(V_{x}^{2}+2V_{x}V_{xt}\right){\rm d} x-\int_{0}^{\infty}p^\prime(\bar{v})V_{xx}^{2} {\rm d}x=&\int_{0}^{\infty}V_{xt}^{2}{\rm d} x+\int_{0}^{\infty}(p^\prime(\bar{v})_{x}V_{x}V_{xx}-F_{1}V_{xx}){\rm d}x\\
  &-\int_{0}^{\infty}F_{2}V_{xx}{\rm d}x.
  \end{split}
  \end{equation}
  We shall estimate the right hand side of \eqref{3.27} one by one. Firstly, by using \eqref{1.21}--\eqref{1.22}, \eqref{2.2}--\eqref{2.4} and \eqref{3.4}--\eqref{3.6b}, we have
  \begin{align}\label{3.28}
   &\int_{0}^{\infty}(p^\prime(\bar{v})_{x}V_{x}-F_{1})V_{xx}{\rm d}x \nonumber\\
   \leq& C\int_{0}^{\infty}|(p^\prime(V_{x}+\bar{v}+\hat{v})-p^\prime(\bar{v}))(V_{xx}+\bar{v}_{x})V_{xx}|{\rm d}x+C\int_{0}^{\infty}(|\hat{v}_{x}|+|\bar{v}_{x}\bar{v}_{t}|+|\bar{v}_{xt}|)|V_{xx}|{\rm d}x\nonumber\\
   &+C\int_{0}^{\infty}|\bar{v}_{x}V_{x}V_{xx}|{\rm d}x\nonumber\\
   \leq& -\frac{p^\prime(\bar{v})}{16}\|V_{xx}(t)\|^{2}+C\delta (1+t)^{-\frac{7}{2}}+C\delta(1+t)^{-2}\|V_{x}(t)\|^{2} ,
   \end{align}
   and
   \begin{align}\label{3.29}
   -\int_{0}^{\infty}F_{2}V_{xx}{\rm d}x=&-\int_{0}^{\infty}[g^\prime f(V_{xt}-p(\bar{v})_{xx}+\hat{v}_{t})+gf^\prime(V_{xx}+\bar{v}_{x}+\hat{v}_{x})]V_{xx}{\rm d}x \nonumber\\
  \leq& C\int_{0}^{\infty}|V_{xt}||V_{xx}|{\rm d}x+C(1+t)^{-1}\int_{0}^{\infty}(|\bar{v}_{xx}|+|\bar{v}_{x}|^{2}+|\hat{v}_{t}|)|V_{xx}|{\rm d}x \nonumber\\
  &+C(\varepsilon +\delta)\int_{0}^{\infty}V_{xx}^{2}{\rm d}x+C(1+t)^{-2}\int_{0}^{\infty}(|\bar{v}_{x}|+|\hat{v}_{x}|)|V_{xx}|{\rm d}x\nonumber\\
  \leq& -\frac{p^\prime(\bar{v})}{16}\|V_{xx}(t)\|^{2}+C\|V_{xt}(t)\|^{2}+C\delta (1+t)^{-\frac{9}{2}}.   	
       \end{align}
  Substituting \eqref{3.28}--\eqref{3.29} into \eqref{3.27}, we can get
\begin{equation}\label{3.30}
  \begin{split}
  \frac{1}{2}\frac{{\rm d}}{{\rm d}t}\int_{0}^{\infty} &\left(V_{x}^{2}+2V_{x}V_{xt}\right){\rm d}x-\frac{3}{4}\int_{0}^{\infty}p^\prime(\bar{v})V_{xx}^{2} {\rm d}x\\
  &\leq C\|V_{xt}(t)\|^{2}+C\delta (1+t)^{-\frac{7}{2}}+C\delta(1+t)^{-2}\|V_{x}(t)\|^{2}.
  \end{split}
  \end{equation}
  Next, the calculations of $\int_{\mathbb{R}^{+}}\eqref{1.16}_{1x}\times V_{xt}{\rm d}x$ gives
  \begin{equation}\label{3.31}
  \begin{split}
   \frac{1}{2}\frac{{\rm d}}{{\rm d}t}\int_{0}^{\infty}\left(V_{xt}^{2}-p^\prime(\bar{v})V_{xx}^{2}\right){\rm d}x+\int_{0}^{\infty}V_{xt}^{2}{\rm d}x=&\int_{0}^{\infty}\frac{p^{\prime\prime}(\bar{v})\bar{v}_{t}V_{xx}^{2}}{-2}{\rm d}x+\int_{0}^{\infty}\left(F_{1}-p^\prime(\bar{v})_{x}V_{x}\right)_{x}V_{xt}{\rm d}x\\
   &+\int_{0}^{\infty}F_{2x}V_{xt}{\rm d}x.
   \end{split}
  \end{equation}
  By using \eqref{2.2}--\eqref{2.4}, we have
  \begin{equation}\label{3.32}
 \int_{0}^{\infty}-\frac{p^{\prime\prime}(\bar{v})\bar{v}_{t}V_{xx}^{2}}{2}{\rm d}x \leq C\delta(1+t)^{-\frac{3}{2}}\|V_{xx}(t)\|^{2}.	
 \end{equation}
  From \eqref{1.21}--\eqref{1.22}, \eqref{2.2}--\eqref{2.4} and \eqref{3.4}--\eqref{3.6b}, we get
   \begin{equation}\label{3.33}
   \begin{split}
    &\int_{0}^{\infty}\left(F_{1}-p^\prime(\bar{v})_{x}V_{x}\right)_{x}V_{xt}{\rm d}x \\
     \leq& \frac{1}{16} \|V_{xt}(t)\|^{2}+\frac{1}{2}\frac{{\rm d}}{{\rm d}t}\int_{0}^{\infty}\left[p^\prime(V_{x}+\bar{v}+\hat{v})-p^\prime(\bar{v})\right]V_{xx}^{2}{\rm d}x+C\delta (1+t)^{-\frac{9}{2}}\\
    &+C(\varepsilon +\delta)(1+t)^{-\frac{3}{2}}\|V_{xx}(t)\|^{2}+C\delta(1+t)^{-3}\|V_{x}(t)\|^{2},
    \end{split}
  \end{equation}
  and
\begin{align}\label{3.34}
    &\int_{0}^{\infty}F_{2x}V_{xt}{\rm d}x\nonumber\\
    =&\int_{0}^{\infty}(g^\prime fV_{xt}+gf^\prime V_{xx})_{x}V_{xt}{\rm d}x+\int_{0}^{\infty}[g^\prime f(-p(\bar{v})_{xx}+\hat{v}_{t})]_{x}V_{xt}{\rm d}x+\int_{0}^{\infty}[gf^\prime(\bar{v}_{x}+\hat{v}_{x})]_{x}V_{xt}{\rm d}x\nonumber\\
    =&-\int_{0}^{\infty}(g^\prime fV_{xt}+gf^\prime V_{xx})V_{xxt}{\rm d}x+C(1+t)^{-1}\int_{0}^{\infty}(|\bar{v}_{xxx}|+|\bar{v}_{x}||\bar{v}_{xx}|+|\bar{v}_{x}|^{3}+|\hat{v}_{xt}|)|V_{xt}|{\rm d}x \nonumber\\
    &+C(1+t)^{-\frac{3}{2}}\int_{0}^{\infty}(|\bar{v}_{xx}|+|\bar{v}_{x}|^{2}+|\hat{v}_{t}|+|\hat{v}_{xx}|)|V_{xt}|{\rm d}x+C(1+t)^{-\frac{5}{2}}\int_{0}^{\infty}(|\bar{v}_{x}|+|\hat{v}_{x}|)|V_{xt}|{\rm d}x \nonumber\\
   \leq&\int_{0}^{\infty}\left(\frac{g^\prime f}{2}\right)_{x}V_{xt}^{2}{\rm d}x-\frac{{\rm d}}{{\rm d}t}\int_{0}^{\infty}\frac{gf^\prime}{2} V_{xx}^{2}{\rm d}x+\int_{0}^{\infty}\left(\frac{gf^\prime}{2}\right)_{t}V_{xx}^{2}{\rm d}x+\frac{1}{32}\int_{0}^{\infty}V_{xt}^{2}{\rm d}x+C\delta (1+t)^{-\frac{11}{2}} \nonumber\\
     \leq& \frac{1}{16}\|V_{xt}(t)\|^{2}+ C(\varepsilon +\delta)(1+t)^{-2}\|V_{xx}(t)\|^{2}-\frac{1}{2}\frac{{\rm d}}{{\rm d}t}\int_{0}^{\infty}gf^\prime V_{xx}^{2}{\rm d}x+C\delta (1+t)^{-\frac{11}{2}}.
  \end{align}
  Substituting \eqref{3.32}--\eqref{3.34} into \eqref{3.31}, we obtain
  \begin{align}\label{3.35}
    &\frac{1}{2}\frac{{\rm d}}{{\rm d}t}\int_{0}^{\infty}\left[V_{xt}^{2}+\left(gf^\prime-p^\prime(\bar{v})\right)V_{xx}^{2}\right]{\rm d}x+\frac{3}{4}\int_{0}^{\infty}V_{xt}^{2}{\rm d}x \nonumber\\
    \leq& \frac{1}{2}\frac{{\rm d}}{{\rm d}t}\int_{0}^{\infty}\left[p^\prime(V_{x}+\bar{v}+\hat{v})-p^\prime(\bar{v})\right]V_{xx}^{2}{\rm d}x+C(\varepsilon +\delta)(1+t)^{-\frac{3}{2}}\|V_{xx}(t)\|^{2}\nonumber\\
    &+C\delta(1+t)^{-3}\|V_{x}(t)\|^{2}+C\delta (1+t)^{-\frac{9}{2}}.
  \end{align}
  Addition of $\lambda \cdot \eqref{3.30}$ to \eqref{3.35} $(0<\lambda \ll 1)$, we can conclude that
  \begin{equation}\label{3.36}
   \begin{split}
   &\frac{1}{2}\frac{{\rm d}}{{\rm d}t}\int_{0}^{\infty} \left[V_{xt}^{2}+\lambda V_{x}^{2}+2\lambda V_{xt}V_{x}+\left(gf^\prime-p^\prime(\bar{v})\right)V_{xx}^{2}\right]{\rm d}x+\frac{1}{2}\int_{0}^{\infty} \left(V_{xt}^{2}-\lambda p^\prime(\bar{v})V_{xx}^{2}\right){\rm d} x \\
   \leq& \frac{1}{2}\frac{{\rm d}}{{\rm d}t}\int_{0}^{\infty}\left[p^\prime(V_{x}+\bar{v}+\hat{v})-p^\prime(\bar{v})\right]V_{xx}^{2}{\rm d}x+ C\delta(1+t)^{-\frac{7}{2}}+C\delta(1+t)^{-2}\|V_{x}(t)\|^{2}.
    \end{split}
  \end{equation}
   Integrating \eqref{3.36} over $[0,t]$ and using Lemma \ref{l3.2}, we get that
  \begin{equation}\label{3.37}
   \|V_{x}(t)\|_{1}^{2}+\|V_{xt}(t)\|^{2}+\int_{0}^{t}(\|V_{xx}(s)\|^{2}+\|V_{xt}(s)\|^{2}){\rm d}s
   \leq  C\left(\|V_{0}\|^{2}_{2}+\|z_{0}\|_{1}^{2}+ \delta \right).
  \end{equation}
Then multiplying \eqref{3.36} by $(1+t)$ and integrating it with respect to $t$, we obtain
\begin{equation}\label{3.38}
 \begin{split}
   (1+t)&(\|V_{x}(t)\|_{1}^{2}+\|V_{xt}(t)\|^{2})+\int_{0}^{t}\left[(1+s)(\|V_{xx}(s)\|^{2}+\|V_{xt}(s)\|^{2})\right]{\rm d}s \\&
   \leq  C\left(\|V_{0}\|^{2}_{2}+\|z_{0}\|_{1}^{2}+\delta \right).
   \end{split}
\end{equation}
Here we used Lemma \ref{l3.2} and \eqref{3.37}. Moreover, multiplying \eqref{3.35} by $(1+t)^{2}$ and integrating it over $[0,t]$ gives
\begin{equation}\label{3.39}
   (1+t)^{2}(\|V_{xx}(t)\|^{2}+\|V_{xt}(t)\|^{2})+\int_{0}^{t}(1+s)^{2}\|V_{xt}(s)\|^{2}{\rm d}s \leq  C\left(\|V_{0}\|^{2}_{2}+\|z_{0}\|_{1}^{2}+\delta \right).
 \end{equation}
 Here we used Lemma \ref{l3.2} and \eqref{3.38}.
 Combining two above equations, we can obtain \eqref{3.26}. The proof of this Lemma \ref{l3.3} is completed.
\end{proof}

\begin{lemma}\label{L3.3}
 If $N(T) \leq \varepsilon^{2}$ and $\delta$ are sufficiently small, then it holds that
\begin{equation}\label{3.40}
\begin{split}
   (1+t)^{3}&(\|V_{xxx}(t)\|^{2}+\|V_{xxt}(t)\|^{2})+\int_{0}^{t}\left[(1+s)^{2}\|V_{xxx}(s)\|^{2}+(1+s)^{3}\|V_{xxt}(s)\|^{2}\right]{\rm d}s \\&
   \leq  C\left(\|V_{0}\|^{2}_{3}+\|z_{0}\|_{2}^{2}+ \delta \right),
   \end{split}
  \end{equation}
  for $0 \leq t \leq T$.
\end{lemma}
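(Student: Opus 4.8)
The plan is to push the two-multiplier energy method of Lemmas \ref{l3.2}--\ref{l3.3} up by one spatial order. First I would differentiate $\eqref{1.16}_{1}$ once in $x$, multiply by $-V_{xxx}$, and integrate over $\mathbb{R}^{+}$: using \eqref{3.5} (which gives $V_{xx}(0,t)=V_{xxt}(0,t)=0$, so every boundary term at $x=0$ drops out) and integrating by parts, this produces the analogue of \eqref{3.30}, namely a differential inequality for $\frac{d}{dt}\int_{0}^{\infty}(V_{xx}^{2}+2V_{xx}V_{xxt})\,dx$ with the good term $-\int_{0}^{\infty}p'(\bar v)V_{xxx}^{2}\,dx$ on the left and $C\|V_{xxt}\|^{2}$ plus lower-order errors on the right. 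Second I would differentiate $\eqref{1.16}_{1}$ twice in $x$, multiply by $V_{xxt}$, and integrate; after converting the genuinely fourth-order terms into third-order ones by integration by parts in $x$ (the boundary terms vanishing because $V_{xxt}(0,t)=0$), this yields the analogue of \eqref{3.35}: a differential inequality with energy $\int_{0}^{\infty}[V_{xxt}^{2}+(gf'-p'(\bar v))V_{xxx}^{2}]\,dx$, dissipation $\frac34\int_{0}^{\infty}V_{xxt}^{2}\,dx$, the remainder $\frac12\frac{d}{dt}\int_{0}^{\infty}[p'(V_{x}+\bar v+\hat v)-p'(\bar v)]V_{xxx}^{2}\,dx$ on the right, and lower-order errors.

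All error terms are then estimated exactly as before: the nonlinear contributions $\int F_{1xx}(\cdot)\,dx$ and $\int F_{2xx}(\cdot)\,dx$ are controlled with the bounds on $(g'f)_{xx}$, $(gf')_{xx}$ and their lower-order counterparts from \eqref{3.6a}--\eqref{3.6b} of Lemma \ref{l3.1}, which turn each extra $x$-derivative on the nonlinearity either into a factor decaying in $t$ or into one of $V_{xxt},V_{xxx}$ carrying a time-decaying coefficient; the terms involving $\bar v,\hat v$ are handled with Lemmas \ref{l2.1}--\ref{l2.2}, and the quadratic products with the {\it a priori} $L^{\infty}$-bounds \eqref{3.4} and the Sobolev inequality \eqref{2.4}. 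The crucial point, and the main obstacle, is the treatment of the genuinely top-order terms in the second identity — the pieces of $(p'(\bar v)V_{x})_{xxx}V_{xxt}$, of $F_{1xx}V_{xxt}$ (through $p''(\bar v)V_{x}V_{xxxx}$), and of $F_{2xx}V_{xxt}$ (through $g'fV_{xxxt}$ and $gf'V_{xxxx}$) that involve $V_{xxxx}$ or $V_{xxxt}$, which are not part of the $L^{2}$ energy. Each of these must be integrated by parts in $x$, with the boundary term killed by $V_{xxt}(0,t)=0$ from \eqref{3.5}, after which the leading one $\int p'(\bar v)V_{xxxx}V_{xxt}\,dx=-\frac12\frac{d}{dt}\int p'(\bar v)V_{xxx}^{2}\,dx+\cdots$ contributes to the energy, the pieces with coefficients $p'(V_{x}+\bar v+\hat v)-p'(\bar v)$, $gf'$ and $p''(\bar v)V_{x}$ — all $O(\varepsilon+\delta)$ by \eqref{3.4} and \eqref{3.6b} — are absorbed as small perturbations of the leading coefficient $-p'(\bar v)\ge c>0$, and the remaining terms come with coefficients $\lesssim(\varepsilon+\delta)(1+t)^{-3/2}$ or faster and are absorbed into the dissipation once $\varepsilon+\delta$ is small.

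To conclude, I would take $\lambda\cdot(\text{first identity})+(\text{second identity})$ with $0<\lambda\ll1$, obtaining a functional equivalent to $\|V_{xx}\|_{1}^{2}+\|V_{xxt}\|^{2}$ with dissipation $\|V_{xxx}\|^{2}+\|V_{xxt}\|^{2}$; integrating in $t$ and using Lemma \ref{l3.3} to bound the lower-order source gives the unweighted estimate $\|V_{xx}(t)\|_{1}^{2}+\|V_{xxt}(t)\|^{2}+\int_{0}^{t}(\|V_{xxx}\|^{2}+\|V_{xxt}\|^{2})\,ds\le C(\|V_{0}\|_{3}^{2}+\|z_{0}\|_{2}^{2}+\delta)$. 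Multiplying the combined inequality first by $(1+t)$ and then by $(1+t)^{2}$ and integrating each time, the extra lower-order remainder generated by the weight is integrable by the preceding step and by Lemma \ref{l3.3}, which yields $(1+t)^{2}(\|V_{xx}\|_{1}^{2}+\|V_{xxt}\|^{2})+\int_{0}^{t}(1+s)^{2}(\|V_{xxx}\|^{2}+\|V_{xxt}\|^{2})\,ds\le C(\cdots)$. Finally, multiplying only the second (sharp) identity by $(1+t)^{3}$ and integrating, using this last bound to control the remainder $\int_{0}^{t}(1+s)^{2}\|V_{xxt}\|^{2}\,ds$ and the $(1+t)^{3}$-weighted boundary term coming from the $\frac{d}{dt}$-remainder (which is $O(\varepsilon+\delta)$ times $(1+t)^{3}\|V_{xxx}\|^{2}$ and hence absorbed into the left-hand side), gives precisely \eqref{3.40}. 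Apart from the top-order bookkeeping flagged above, every step is a mechanical repetition of the estimates already performed in Lemmas \ref{l3.2}--\ref{l3.3}.
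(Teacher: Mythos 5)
Your proposal matches the paper's proof of Lemma \ref{L3.3} step for step: the two multipliers $-V_{xxx}$ and $V_{xxt}$ applied to $\partial_x\eqref{1.16}_1$ and $\partial_x^2\eqref{1.16}_1$ respectively are exactly \eqref{3.47} and \eqref{3.55}, the $\lambda$-combination is \eqref{3.56}, the integration by parts in $x$ of all fourth-order terms using $V_{xx}(0,t)=V_{xxt}(0,t)=0$ and the absorption of the small-coefficient perturbations of $-p'(\bar v)$ are precisely the manipulations in \eqref{3.49}--\eqref{3.54}, and the weight bootstrap (unweighted, then $(1+t)$, $(1+t)^2$ on the combination, then $(1+t)^3$ on the sharp second identity alone) reproduces \eqref{3.57}--\eqref{3.59}. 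Apart from cosmetic imprecision in naming the individual fourth-order pieces, this is the same argument as the paper's.
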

\begin{proof}
By calculating $\int_{\mathbb{R}^{+}}\partial_{x}\eqref{1.16}_{1}\times (-V_{xxx}){\rm d}x$, we have after some integrations by parts that
\begin{align}\label{3.41}
  &\frac{{\rm d}}{{\rm d}t}\int_{0}^{\infty}(\frac{1}{2}V_{xx}^{2}+V_{xx}V_{xxt}){\rm d}x-\int_{0}^{\infty}p^\prime(\bar{v})V_{xxx}^{2}{\rm d}x \nonumber\\
  =&\int_{0}^{\infty}V_{xxt}^{2}{\rm d}x-\int_{0}^{\infty}\left(F_{1x}-p^\prime(\bar{v})_{xx}V_{x}-2p^\prime(\bar{v})_{x}V_{xx}\right)V_{xxx}{\rm d}x-\int_{0}^{\infty}F_{2x}V_{xxx}{\rm d}x.
  \end{align}
  We next estimate the right hand side of \eqref{3.41}. By using Lemmas \ref{l2.1}--\ref{l2.3} and \eqref{3.4}--\eqref{3.5}, we get
  \begin{align}\label{3.42}
  &-\int_{0}^{\infty}\left(F_{1x}-p^\prime(\bar{v})_{xx}V_{x}-2p^\prime(\bar{v})_{x}V_{xx}\right)V_{xxx}{\rm d}x \nonumber\\&
   \leq -\frac{p^\prime(\bar{v})}{16}\|V_{xxx}(t)\|^{2}+C\delta(1+t)^{-\frac{9}{2}}+C\delta(1+t)^{-3}\|V_{x}(t)\|^{2}+C(1+t)^{-2}\|V_{xx}(t)\|^{2}.
   \end{align}
   Noticing that
\begin{align}\label{3.43}
 -\int_{0}^{\infty}F_{2x}V_{xxx}{\rm d}x=&-\int_{0}^{\infty}\left(g^\prime fV_{xt}\right)_{x}V_{xxx}{\rm d}x-\int_{0}^{\infty}\left(gf^\prime V_{xx}\right)_{x}V_{xxx}{\rm d}x\nonumber\\
    &-\int_{0}^{\infty}[g^\prime f(-p(\bar{v})_{xx}+\hat{v}_{t})]_{x}V_{xxx}{\rm d}x-\int_{0}^{\infty}[gf^\prime(\bar{v}_{x}+\hat{v}_{x})]_{x}V_{xxx}{\rm d}x\nonumber\\
    &:= I_{6}+I_{7}+I_{8}+I_{9},
\end{align}
then by using \eqref{1.21}--\eqref{1.22}, \eqref{2.2}--\eqref{2.4} and \eqref{3.4}--\eqref{3.6b}, we obtain that
\begin{equation}\label{3.44}
    \begin{split}
   I_{6}=&-\int_{0}^{\infty}\left(g^\prime f\right)_{x}V_{xt}V_{xxx}{\rm d}x-\int_{0}^{\infty}g^\prime fV_{xxt}V_{xxx}{\rm d}x\\
   &\leq -\frac{p^\prime(\bar{v})}{16}\|V_{xxx}(t)\|^{2}+C\|V_{xxt}(t)\|^{2}+C(1+t)^{-3}\|V_{xt}(t)\|^{2},
    \end{split}
   \end{equation}
   \begin{equation}\label{3.45}
    I_{7}= \int_{0}^{\infty}\left(gf^\prime\right)_{x}V_{xx}V_{xxx}{\rm d}x+\int_{0}^{\infty}gf^\prime V_{xxx}^{2}{\rm d}x
    \leq -\frac{p^\prime(\bar{v})}{16}\|V_{xxx}(t)\|^{2}+C(1+t)^{-5}\|V_{xx}(t)\|^{2},
   \end{equation}
   and
   \begin{equation}\label{3.46}
    \begin{split}
    I_{8}+I_{9} &\leq C(1+t)^{-1}\int_{0}^{\infty}(|\bar{v}_{xxx}|+|\bar{v}_{x}||\bar{v}_{xx}|+|\bar{v}_{x}|^{3}+|\hat{v}_{xt}|)|V_{xxx}|{\rm d}x\\
    &~~~~+C(1+t)^{-\frac{5}{2}}\int_{0}^{\infty}(|\bar{v}_{x}|+|\hat{v}_{x}|)|V_{xxx}|{\rm d}x\\
    &~~~~+C(1+t)^{-\frac{3}{2}}\int_{0}^{\infty}(|\bar{v}_{xx}|+|\bar{v}_{x}|^{2}+|\hat{v}_{t}|+|\hat{v}_{xx}|)|V_{xxx}|{\rm d}x\\
    &\leq -\frac{p^\prime(\bar{v})}{16}\|V_{xxx}(t)\|^{2}+C\delta (1+t)^{-\frac{11}{2}}.
    \end{split}
   \end{equation}
  Substituting \eqref{3.42}--\eqref{3.46} into \eqref{3.41}, we have
  \begin{equation}\label{3.47}
  \begin{split}
     &\frac{1}{2}\frac{{\rm d}}{{\rm d}t}\int_{0}^{\infty}(V_{xx}^{2}+V_{xx}V_{xxt}){\rm d}x-\frac{3}{4}\int_{0}^{\infty}p^\prime(\bar{v})V_{xxx}^{2}{\rm d}x\\
     \leq&  C\|V_{xxt}(t)\|^{2}+C(1+t)^{-3}(\|V_{x}(t)\|^{2}+\|V_{xt}(t)\|^{2})+C(1+t)^{-2}\|V_{xx}(t)\|^{2}+C\delta (1+t)^{-\frac{9}{2}}.
     \end{split}
  \end{equation}
Then we have from $\int_{\mathbb{R}^{+}}\partial_{x}^{2}\eqref{1.16}_{1}\times V_{xxt}{\rm d}x$ that
\begin{align}\label{3.48}
&\frac{1}{2}\frac{{\rm d}}{{\rm d}t}\int_{0}^{\infty}\left(V_{xxt}^{2}-p^\prime(\bar{v})V_{xxx}^{2}\right){\rm d}x+\int_{0}^{\infty}V_{xxt}^{2}{\rm d}x\nonumber\\
   =&-\frac{1}{2}\int_{0}^{\infty}p^{\prime\prime}(\bar{v})\bar{v}_{t}V_{xxx}^{2}{\rm d}x+ \int_{0}^{\infty}\left[F_{1x}-p^\prime(\bar{v})_{xx}V_{x}-2p^\prime(\bar{v})_{x}V_{xx}\right]_{x}V_{xxt}{\rm d}x\nonumber\\
   &+\int_{0}^{\infty}F_{2xx}V_{xxt}{\rm d}x.	
\end{align}
By using Lemmas \ref{l2.1}--\ref{l2.3} and \eqref{3.4}--\eqref{3.5}, we can deduce that
\begin{equation}\label{3.49}
-\frac{1}{2}\int_{0}^{\infty}p^{\prime\prime}(\bar{v})\bar{v}_{t}V_{xxx}^{2}{\rm d}x\leq C\delta(1+t)^{-\frac{3}{2}}\|V_{xxx}(t)\|^{2},	
\end{equation}
and
\begin{align}\label{3.50}
&\int_{0}^{\infty}\left[F_{1x}-p^\prime(\bar{v})_{xx}V_{x}-2p^\prime(\bar{v})_{x}V_{xx}\right]_{x}V_{xxt}{\rm d}x\nonumber\\
\leq& \frac{1}{16} \|V_{xxt}(t)\|^{2}+\frac{1}{2}\frac{{\rm d}}{{\rm d}t}\int_{0}^{\infty}\left[p^\prime(V_{x}+\bar{v}+\hat{v})-p^\prime(\bar{v})\right]V_{xxx}^{2}{\rm d}x+C\delta(1+t)^{-\frac{11}{2}}+C(1+t)^{-4}\|V_{x}(t)\|^{2}\nonumber\\
  &+C(\varepsilon +\delta)(1+t)^{-\frac{3}{2}}\|V_{xxx}(t)\|^{2}+C(1+t)^{-3}\|V_{xx}(t)\|^{2}.	
\end{align}
Noticing that
\begin{align}\label{3.51}
\int_{0}^{\infty}F_{2xx}V_{xxt}{\rm d}x=&\int_{0}^{\infty}(g^\prime fV_{xt})_{xx}V_{xxt}{\rm d}x+\int_{0}^{\infty}(gf^\prime V_{xx})_{xx}V_{xxt}{\rm d}x\nonumber\\
&+\int_{0}^{\infty}[g^\prime f(-p(\bar{v})_{xx}+\hat{v}_{t})]_{xx}V_{xxt}{\rm d}x\nonumber\\
&+\int_{0}^{\infty}[gf^\prime(\bar{v}_{x}+\hat{v}_{x})]_{xx}V_{xxt}{\rm d}x:=I_{10}+I_{11}+I_{12}+I_{13}.	
\end{align}
From \eqref{1.21}--\eqref{1.22}, \eqref{2.2}--\eqref{2.4} and \eqref{3.4}--\eqref{3.6b}, we can conclude that
\begin{align}\label{3.52}
I_{10}&=\int_{0}^{\infty}\left(g^\prime f\right)_{xx}V_{xt}V_{xxt}{\rm d}x+2\int_{0}^{\infty}\left(g^\prime f\right)_{x}V_{xxt}^{2}{\rm d}x+\int_{0}^{\infty}g^\prime fV_{xxt}V_{xxxt}{\rm d}x\nonumber\\
&=\int_{0}^{\infty}\left[\left(g^\prime f\right)_{xx}V_{xt}V_{xxt}+\frac{3}{2}\left(g^\prime f\right)_{x}V_{xxt}^{2}\right]{\rm d}x\nonumber\\
&\leq \frac{1}{32}\int_{0}^{\infty}V_{xxt}^{2}{\rm d}x+C(1+t)^{-4}\int_{0}^{\infty}V_{xt}^{2}{\rm d}x+C\|(uV_{xt})(t)\|_{L^{\infty}}\int_{0}^{\infty}|V_{xxt}||V_{xxx}|{\rm d}x\nonumber\\
    &\leq \frac{1}{16} \|V_{xxt}(t)\|^{2}+C(1+t)^{-4}\|V_{xt}(t)\|^{2}+C(\varepsilon+\delta)(1+t)^{-\frac{15}{4}}\|V_{xxx}(t)\|^{2}, 		
\end{align}
\begin{align}\label{3.53}
I_{11}&=\int_{0}^{\infty}[(gf^\prime)_{xx}V_{xx}+2(gf^\prime)_{x}V_{xxx}]V_{xxt}{\rm d}x+\int_{0}^{\infty}gf^\prime V_{xxxx}V_{xxt}{\rm d}x\nonumber\\
&=\int_{0}^{\infty}\{[(gf^\prime)_{xx}V_{xx}+(gf^\prime)_{x}V_{xxx}]V_{xxt}-gf^\prime V_{xxx}V_{xxxt}\}{\rm d}x\nonumber\\
&\leq \frac{1}{32}\int_{0}^{\infty}V_{xxt}^{2}{\rm d}x+C(1+t)^{-6}\int_{0}^{\infty}V_{xx}^{2}{\rm d}x+C\|(u^{2}V_{xx})(t)\|_{L^{\infty}}\int_{0}^{\infty}|V_{xxt}||V_{xxx}|{\rm d}x\nonumber\\
    &~~+C(\varepsilon +\delta)(1+t)^{-5}\int_{0}^{\infty}V_{xxx}^{2}{\rm d}x-\frac{{\rm d}}{{\rm d}t}\int_{0}^{\infty}\frac{gf^\prime}{2}V_{xxx}^{2}{\rm d}x+\int_{0}^{\infty}\left(\frac{gf^\prime}{2}\right)_{t}V_{xxx}^{2}{\rm d}x\nonumber\\
    &\leq \frac{1}{16} \|V_{xxt}(t)\|^{2}+C(1+t)^{-6}\|V_{xx}(t)\|^{2}+C(\varepsilon +\delta)(1+t)^{-3}\|V_{xxx}(t)\|^{2}\nonumber\\
    &~~-\frac{1}{2}\frac{{\rm d}}{{\rm d}t}\int_{0}^{\infty}gf^\prime V_{xxx}^{2}{\rm d}x,	
\end{align}
and
\begin{equation}\label{3.54}
I_{12}+I_{13} \leq \frac{1}{16} \|V_{xxt}(t)\|^{2}+C\delta(1+t)^{-5}\|V_{xxx}(t)\|^{2}+C\delta(1+t)^{-\frac{13}{2}}.	
\end{equation}
Substituting \eqref{3.49}--\eqref{3.54} into \eqref{3.48}, we have
\begin{equation}\label{3.55}
\begin{split}
    &\frac{1}{2}\frac{{\rm d}}{{\rm d}t}\int_{0}^{\infty}\left[V_{xxt}^{2}+\left(gf^\prime-p^\prime(\bar{v})\right)V_{xxx}^{2}\right]{\rm d}x+\frac{3}{4}\int_{0}^{\infty}V_{xxt}^{2}{\rm d}x \\
    \leq& \frac{1}{2}\frac{{\rm d}}{{\rm d}t}\int_{0}^{\infty}\left[p^\prime(V_{x}+\bar{v}+\hat{v})-p^\prime(\bar{v})\right]V_{xxx}^{2}{\rm d}x+C(\varepsilon +\delta)(1+t)^{-\frac{3}{2}}\|V_{xxx}(t)\|^{2}+C\delta (1+t)^{-\frac{11}{2}}\\&
    +C(1+t)^{-3}\|V_{xx}(t)\|^{2}+C(1+t)^{-4}(\|V_{x}(t)\|^{2}+\|V_{xt}(t)\|^{2}).
    \end{split}
\end{equation}
  Addition of $\lambda \cdot \eqref{3.47}$ to \eqref{3.55} $(0<\lambda \ll 1)$, we have
  \begin{equation}\label{3.56}
   \begin{split}
   &\frac{1}{2}\frac{{\rm d}}{{\rm d}t}\int_{0}^{\infty} \left[V_{xxt}^{2}+\lambda V_{xx}^{2}+2\lambda V_{xxt}V_{xx}+\left(gf^\prime-p^\prime(\bar{v})\right)V_{xxx}^{2}\right]{\rm d}x+\frac{1}{2}\int_{0}^{\infty} \left(V_{xxt}^{2}-\lambda p^\prime(\bar{v})V_{xxx}^{2}\right){\rm d} x \\
   \leq& \frac{1}{2}\frac{{\rm d}}{{\rm d}t}\int_{0}^{\infty}\left[p^\prime(V_{x}+\bar{v}+\hat{v})-p^\prime(\bar{v})\right]V_{xxx}^{2}{\rm d}x+ C\delta (1+t)^{-\frac{9}{2}}+C(1+t)^{-2}\|V_{xx}(t)\|^{2}\\&
   +C(1+t)^{-3}(\|V_{x}(t)\|^{2}+\|V_{xt}(t)\|^{2}).
    \end{split}
    \end{equation}
  Integrating the above equation over $[0,t]$ and using Lemmas \ref{l3.2}--\ref{l3.3}, we obtain
\begin{equation}\label{3.57}
   \|V_{xx}(t)\|_{1}^{2}+\|V_{xxt}(t)\|^{2}+\int_{0}^{t}(\|V_{xxx}(s)\|^{2}+\|V_{xxt}(s)\|^{2}){\rm d}s\leq  C\left(\|V_{0}\|^{2}_{3}+\|z_{0}\|_{2}^{2}+ \delta \right).
  \end{equation}
Integrating \eqref{3.56}, $(1+t)\cdot \eqref{3.56}$ and $(1+t)^{2}\cdot \eqref{3.56}$ over $[0,t]$ respectively, one gets
\begin{equation}\label{3.58}
   \begin{split}
   (1+t)^{2}(\|V_{xx}(t)\|_{1}^{2}+&\|V_{xxt}(t)\|^{2})+\int_{0}^{t}(1+s)^{2}(\|V_{xxx}(s)\|^{2}+\|V_{xxt}(s)\|^{2}){\rm d}s \\&
   \leq C\left(\|V_{0}\|^{2}_{3}+\|z_{0}\|_{2}^{2}+ \delta \right) .
    \end{split}
  \end{equation}
  Then integrating $(1+t)^{3} \cdot \eqref{3.55}$ over $[0,t]$ gives
  \begin{equation}\label{3.59}
   (1+t)^{3}(\|V_{xxx}(t)\|^{2}+\|V_{xxt}(t)\|^{2})+\int_{0}^{t}(1+s)^{3}\|V_{xxt}(s)\|^{2}{\rm d}s
   \leq  C\left(\|V_{0}\|^{2}_{3}+\|z_{0}\|_{2}^{2}+\delta \right) .
  \end{equation}
Here we have used Lemmas \ref{l3.2}--\ref{l3.3} and \eqref{3.58}. Combining the above two equations, one can immediately obtain \eqref{3.40}. The proof of this Lemma is completed.
\end{proof}
\begin{lemma}\label{l3.5}
 If $N(T) \leq \varepsilon^{2}$ and $\delta$ are sufficiently small, then it holds that
 \begin{equation}\label{3.60}
 \begin{split}
   (1+t)^{2}\|V_{t}(t)\|^{2}&+(1+t)^{3}(\|V_{xt}(t)\|^{2}+\|V_{tt}(t)\|^{2})\\
   &+\int_{0}^{t}\left[(1+s)^{2}\|V_{xt}(s)\|^{2}+(1+s)^{3}\|V_{tt}(s)\|^{2}\right]{\rm d}s \\
   \leq&  C\left(\|V_{0}\|^{2}_{2}+\|z_{0}\|_{1}^{2}+ \delta \right),
   \end{split}
  \end{equation}
  for $0 \leq t \leq T$.
\end{lemma}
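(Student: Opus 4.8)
This is the ``one time-derivative'' counterpart of Lemma \ref{l3.3}, and I would prove it by running the scheme of Lemmas \ref{l3.2}--\ref{L3.3} on the equation obtained by differentiating $\eqref{1.16}_{1}$ once in $t$. Setting $W:=V_{t}$ and recalling $\alpha=1$, one finds
\begin{equation*}
W_{tt}+\left(p^{\prime}(\bar v)W_{x}\right)_{x}+W_{t}=\widetilde{F}_{1}+\widetilde{F}_{2},\qquad \widetilde{F}_{1}:=F_{1t}-\left(p^{\prime\prime}(\bar v)\bar v_{t}V_{x}\right)_{x},\qquad \widetilde{F}_{2}:=F_{2t},
\end{equation*}
with the boundary conditions $W(0,t)=W_{t}(0,t)=0$ coming from \eqref{3.5} (the second by differentiating $V_{t}(0,t)=0$ in $t$), the initial data $W|_{t=0}=z_{0}$, $W_{x}|_{t=0}=z_{0x}$, and $W_{t}|_{t=0}=V_{tt}|_{t=0}$ which, read off from $\eqref{1.15}_{2}$, satisfies $\|W_{t}|_{t=0}\|\le C(\|V_{0}\|_{2}+\|z_{0}\|_{1}+\delta)$ (the costly summand being $(p^{\prime}(\bar v)V_{0x})_{x}$, which needs $V_{0xx}$; this is why $\|V_{0}\|_{2}$, not $\|V_{0}\|_{1}$, appears in \eqref{3.60}). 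I would then perform the two multiplier computations: multiplying the $W$-equation by $W$ gives a differential identity for $\frac{{\rm d}}{{\rm d}t}\int_{0}^{\infty}(\tfrac12 W^{2}+WW_{t}){\rm d}x-\int_{0}^{\infty}p^{\prime}(\bar v)W_{x}^{2}{\rm d}x$ with right-hand side $\int_{0}^{\infty}W_{t}^{2}{\rm d}x+\int_{0}^{\infty}(\widetilde{F}_{1}+\widetilde{F}_{2})W{\rm d}x$, and multiplying it by $W_{t}$ gives one for $\frac12\frac{{\rm d}}{{\rm d}t}\int_{0}^{\infty}(W_{t}^{2}-p^{\prime}(\bar v)W_{x}^{2}){\rm d}x+\int_{0}^{\infty}W_{t}^{2}{\rm d}x$ with right-hand side $-\frac12\int_{0}^{\infty}p^{\prime\prime}(\bar v)\bar v_{t}W_{x}^{2}{\rm d}x+\int_{0}^{\infty}(\widetilde{F}_{1}+\widetilde{F}_{2})W_{t}{\rm d}x$; all boundary contributions at $x=0$ drop out because $W(0,t)=W_{t}(0,t)=0$.

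The source terms are estimated with Lemmas \ref{l2.1}--\ref{l2.3}, Lemma \ref{l3.1}, and the already-established Lemmas \ref{l3.2}--\ref{L3.3}. The delicate point is that $\widetilde{F}_{2}=F_{2t}$ and $\widetilde{F}_{1}$ contain the top-order quantities $V_{xtt}=W_{xt}$ and $V_{ttt}=W_{tt}$ — through $(g^{\prime}f)_{t}$, $(gf^{\prime})_{t}$, $\partial_{t}V_{xt}$, $\partial_{t}V_{xx}$, and through $(p^{\prime\prime}(\bar v)\bar v_{t}V_{x})_{x}$ tested against $W_{t}$. These are dealt with by integration by parts exactly as in \eqref{3.34}--\eqref{3.35}: a piece $\int g^{\prime}fW_{xt}W_{t}{\rm d}x$ becomes $-\tfrac12\int(g^{\prime}f)_{x}W_{t}^{2}{\rm d}x$, and $|(g^{\prime}f)_{x}|\le C(\varepsilon+\delta)(1+t)^{-3/2}$ (Lemma \ref{l3.1}) makes it absorbable into the dissipation $\int W_{t}^{2}{\rm d}x$; a piece $\int gf^{\prime}V_{xx}(\cdots)_{t}{\rm d}x$ generates a correction $-\tfrac12\frac{{\rm d}}{{\rm d}t}\int gf^{\prime}W_{x}^{2}{\rm d}x$ plus small remainders; and $\int p^{\prime\prime}(\bar v)\bar v_{t}V_{x}V_{xtt}{\rm d}x$ is rewritten as $\frac{{\rm d}}{{\rm d}t}\int p^{\prime\prime}(\bar v)\bar v_{t}V_{x}V_{xt}{\rm d}x-\int(p^{\prime\prime}(\bar v)\bar v_{t}V_{x})_{t}V_{xt}{\rm d}x$, the first summand joining the left-hand energy and the second bounded by $C\delta(1+t)^{-q}+C\delta(1+t)^{-r}\|V_{xt}\|^{2}$. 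After these manipulations the right-hand sides reduce to: a small multiple of $\int W_{t}^{2}{\rm d}x$ (absorbed); total $t$-derivatives (moved to the left, their $t=0$ values controlled by $\|V_{0}\|_{2}^{2}+\|z_{0}\|_{1}^{2}+\delta$ and the enlarged energy still coercive since $\varepsilon+\delta$ is small and $gf^{\prime}-p^{\prime}(\bar v)\sim-p^{\prime}(\bar v)>0$); fast-decaying terms $C\delta(1+t)^{-q}$; and terms $C(1+t)^{-r}(\|V_{x}\|^{2}+\|V_{xx}\|^{2}+\|V_{t}\|^{2}+\|V_{xt}\|^{2})$ with $r\ge2$, all time-integrable by Lemmas \ref{l3.2}--\ref{L3.3}.

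Finally I would close by the time-weighting iteration used throughout Subsection \ref{s3.1}. Combining $\lambda\cdot(\text{first inequality})+(\text{second inequality})$ with $0<\lambda\ll1$ and integrating over $[0,t]$ gives the base estimate $\|V_{t}\|_{1}^{2}+\|V_{tt}\|^{2}+\int_{0}^{t}(\|V_{xt}\|^{2}+\|V_{tt}\|^{2}){\rm d}s\le C(\|V_{0}\|_{2}^{2}+\|z_{0}\|_{1}^{2}+\delta)$. Multiplying the combined inequality by $(1+t)$ and then by $(1+t)^{2}$ and integrating — using at each stage the estimate from the previous weight to absorb the extra $\int_{0}^{t}(1+s)^{k-1}(\cdots){\rm d}s$ — yields $(1+t)^{2}(\|V_{t}\|^{2}+\|V_{xt}\|^{2}+\|V_{tt}\|^{2})+\int_{0}^{t}(1+s)^{2}(\|V_{xt}\|^{2}+\|V_{tt}\|^{2}){\rm d}s\le C(\cdots)$ (legitimate only up to weight $(1+t)^{2}$, since $\|V_{t}\|^{2}\sim(1+t)^{-2}$). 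To reach the sharper power, one multiplies the second inequality alone — whose energy contains only $W_{t}^{2}$ and $W_{x}^{2}$, not $W^{2}$ — by $(1+t)^{3}$ and integrates, using the weight-$2$ estimate just obtained, to get $(1+t)^{3}(\|V_{xt}\|^{2}+\|V_{tt}\|^{2})+\int_{0}^{t}(1+s)^{3}\|V_{tt}\|^{2}{\rm d}s\le C(\cdots)$. Collecting these inequalities gives \eqref{3.60}. \textbf{The main obstacle} is exactly the handling of the top-order factors $W_{xt}$ and $W_{tt}$ hidden in $F_{2t}$ and in the commutator term $(p^{\prime\prime}(\bar v)\bar v_{t}V_{x})_{x}$: each occurrence must be routed, by a judicious integration by parts in $x$ or $t$, either onto a small/decaying coefficient so that it is swallowed by $\int W_{t}^{2}{\rm d}x$, or into a total time-derivative enlarging the left-hand energy — after which one must still check that the enlarged energy stays coercive.
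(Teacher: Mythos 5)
Your proposal reproduces the paper's argument: the paper also computes $\int\partial_t\eqref{1.16}_1\cdot V_t\,{\rm d}x$ and $\int\partial_t\eqref{1.16}_1\cdot V_{tt}\,{\rm d}x$ (your two multiplier identities for $W=V_t$ are exactly these in different notation), handles the top-order factors $V_{xtt},V_{ttt}$ hidden in $F_{2t}$ and the commutator $(p''(\bar v)\bar v_t V_x)_x$ by the same integration-by-parts-onto-small-coefficients device, forms $\lambda\cdot(\text{first})+(\text{second})$, and iterates the time-weights $(1+t)^k$, $k=0,1,2$, on the combined inequality before applying $(1+t)^3$ to the second inequality alone. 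Your observation that the $\|V_0\|_2$ norm is forced by $V_{tt}|_{t=0}$ read from $\eqref{1.15}_2$ is correct and is in fact the sharper accounting supporting the stated bound (the paper's displayed intermediate inequalities \eqref{3.77}--\eqref{3.78} conservatively write $\|V_0\|_3^2+\|z_0\|_2^2$, but the proof only uses $\|V_0\|_2^2+\|z_0\|_1^2$ as in the lemma statement).
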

\begin{proof}
Firstly, by calculating $\int_{\mathbb{R}^{+}}\partial_{t}\eqref{1.16}_{1}\times V_{t}{\rm d}x$, we have
\begin{equation}\label{3.61}
\begin{split}
\frac{1}{2}\frac{{\rm d}}{{\rm d}t}&\int_{0}^{\infty} \left(V_{t}^{2}+2V_{t}V_{tt}\right){\rm d} x-\int_{0}^{\infty} p^\prime(\bar{v})V_{xt}^{2}{\rm d} x \\
&=\int_{0}^{\infty}V_{tt}^{2}{\rm d}x+\int_{0}^{\infty}(F_{1t}V_{t}+p^\prime(\bar{v})_{t}V_{x}V_{xt}){\rm d}x+\int_{\mathbb{R}}F_{2t}V_{t}{\rm d}x.
\end{split}
\end{equation}
  By using Lemmas \ref{l2.1}--\ref{l2.3} and \eqref{3.4}--\eqref{3.5}, one gets
  \begin{equation}\label{3.62}
  \begin{split}
   &\int_{0}^{\infty}(F_{1t}V_{t}+p^\prime(\bar{v})_{t}V_{x}V_{xt}){\rm d}x\\&
   \leq -\frac{p^\prime(\bar{v})}{16}\|V_{xt}(t)\|^{2}+C\delta (1+t)^{-\frac{9}{2}}+C\delta(1+t)^{-3}\|V_{x}(t)\|^{2}.
   \end{split}
  \end{equation}
  Next, notice that
  \begin{align}\label{3.63}
    \int_{0}^{\infty}F_{2t}V_{t}{\rm d}x=&\int_{0}^{\infty}\left(g^\prime fV_{xt}\right)_{t}V_{t}{\rm d}x+\int_{0}^{\infty}\left(gf^\prime V_{xx}\right)_{t}V_{t}{\rm d}x\nonumber\\
    &+\int_{0}^{\infty}[g^\prime f(-p(\bar{v})_{xx}+\hat{v}_{t})]_{t}V_{t}{\rm d}x\nonumber\\
    &+\int_{0}^{\infty}[gf^\prime(\bar{v}_{x}+\hat{v}_{x})]_{t}V_{t}{\rm d}x:= I_{14}+I_{15}+I_{16}+I_{17}.
    \end{align}
  By employing \eqref{1.21}--\eqref{1.22}, \eqref{2.2}--\eqref{2.4} and \eqref{3.4}--\eqref{3.6b}, one has
  \begin{equation}\label{3.64}
    \begin{split}
    I_{14}&=\int_{0}^{\infty}\left(g^\prime f\right)_{t}V_{xt}V_{t}{\rm d}x+\int_{0}^{\infty}g^\prime fV_{xtt}V_{t}{\rm d}x\\
    &\leq -\frac{p^\prime(\bar{v})}{32}\|V_{xt}(t)\|^{2}+C(1+t)^{-4}\|V_{t}(t)\|^{2}-\int_{0}^{\infty}g^\prime fV_{tt}V_{xt}{\rm d}x-\int_{0}^{\infty}\left(g^\prime f\right)_{x}V_{tt}V_{t}{\rm d}x\\
    &\leq -\frac{p^\prime(\bar{v})}{16}\|V_{xt}(t)\|^{2}+C(1+t)^{-3}\|V_{t}(t)\|^{2}+C\|V_{tt}(t)\|^{2},
    \end{split}
  \end{equation}
  \begin{equation}\label{3.65}
    \begin{split}
    I_{15}&=\int_{0}^{\infty}\left(gf^\prime\right)_{t}V_{xx}V_{t}{\rm d}x+\int_{0}^{\infty}gf^\prime V_{xxt}V_{t}{\rm d}x\\
    &\leq C(1+t)^{-3}(\|V_{t}(t)\|^{2}+\|V_{xx}(t)\|^{2})-\int_{0}^{\infty}gf^\prime V_{xt}^{2}{\rm d}x-\int_{0}^{\infty}\left(gf^\prime\right)_{x}V_{xt}V_{t}{\rm d}x\\
    &\leq -\frac{p^\prime(\bar{v})}{16}\|V_{xt}(t)\|^{2}+C(1+t)^{-3}(\|V_{t}(t)\|^{2}+\|V_{xx}(t)\|^{2}).
    \end{split}
  \end{equation}
 Similarly, we can prove
  \begin{equation}\label{3.66}
    I_{16}+I_{17} \leq C(1+t)^{-1}\|V_{t}(t)\|^{2}+C\|V_{tt}(t)\|^{2}+C\delta(1+t)^{-\frac{11}{2}}.
  \end{equation}
  Substituting \eqref{3.62}--\eqref{3.66} into \eqref{3.61}, we can obtain
  \begin{align}\label{3.67}
     &\frac{1}{2}\frac{{\rm d}}{{\rm d}t}\int_{0}^{\infty}(V_{t}^{2}+2V_{t}V_{tt}){\rm d}x-\frac{3}{4}\int_{0}^{\infty}p^\prime(\bar{v})V_{xt}^{2}{\rm d}x\nonumber\\
     \leq&C\|V_{tt}(t)\|^{2}+C(1+t)^{-3}\|V_{x}(t)\|_{1}^{2}+C\delta(1+t)^{-\frac{9}{2}}+C(1+t)^{-1}\|V_{t}(t)\|^{2}.
     \end{align}
Then by calculating $\int_{\mathbb{R}^{+}}\partial_{t}\eqref{1.16}_{1}\times V_{tt}{\rm d}x$, we obtain
\begin{equation}\label{3.68}
\begin{split}
  \frac{1}{2}\frac{{\rm d}}{{\rm d}t}\int_{0}^{\infty}&(V_{tt}^{2}-p^\prime(\bar{v})V_{xt}^{2}){\rm d}x +\int_{0}^{\infty}V_{tt}^{2}{\rm d}x \\
  & = -\int_{0}^{\infty}\frac{p^{\prime\prime}(\bar{v})\bar{v}_{t}}{2}V_{xt}^{2}{\rm d}x+\int_{0}^{\infty}\left[F_{1t}+\left(p^\prime(\bar{v})_{t}V_{x}\right)_{x}\right]V_{tt}{\rm d}x+\int_{0}^{\infty}F_{2t}V_{tt}{\rm d}x .
  \end{split}
  \end{equation}
  The right hand side of \eqref{3.68} can be estimated as follows. From Lemmas \ref{l2.1}--\ref{l2.3} and \eqref{3.4}--\eqref{3.5}, we have
  \begin{equation}\label{3.69}
  -\int_{0}^{\infty}\frac{p^{\prime\prime}(\bar{v})\bar{v}_{t}}{2}V_{xt}^{2}{\rm d}x \leq C\delta(1+t)^{-\frac{3}{2}}\|V_{xt}(t)\|^{2},
  \end{equation}
  and
  \begin{align}\label{3.70}
   &\int_{0}^{\infty}\left[F_{1t}+\left(p^\prime(\bar{v})_{t}V_{x}\right)_{x}\right]V_{tt}{\rm d}x \nonumber\\
   \leq& \frac{1}{16}\|V_{tt}(t)\|^{2}+\frac{1}{2}\frac{{\rm d}}{{\rm d}t}\int_{0}^{\infty}\left(p^\prime(V_{x}+\bar{v}+\hat{v})-p^\prime(\bar{v})\right)V_{xt}^{2}{\rm d}x+C\delta (1+t)^{-\frac{11}{2}}+C\delta(1+t)^{-4}\|V_{x}(t)\|^{2}\nonumber\\
   &+C(\delta+\varepsilon)(1+t)^{-\frac{3}{2}}\|V_{xt}(t)\|^{2}+C(1+t)^{-3}\|V_{xx}(t)\|^{2}.
   \end{align}
  Finally, notice that
  \begin{equation}\label{3.71}
    \begin{split}
    \int_{0}^{\infty}F_{2t}V_{tt}{\rm d}x=&\int_{0}^{\infty}\left(g^\prime fV_{xt}\right)_{t}V_{tt}{\rm d}x+\int_{0}^{\infty}\left(gf^\prime V_{xx}\right)_{t}V_{tt}{\rm d}x\\
    &+\int_{0}^{\infty}[g^\prime f(-p(\bar{v})_{xx}+\hat{v}_{t})]_{t}V_{tt}{\rm d}x\\
    &+\int_{0}^{\infty}[gf^\prime(\bar{v}_{x}+\hat{v}_{x})]_{t}V_{tt}{\rm d}x:= I_{18}+I_{19}+I_{20}+I_{21}.
    \end{split}
  \end{equation}
  By employing \eqref{1.21}--\eqref{1.22}, \eqref{2.2}--\eqref{2.4} and \eqref{3.4}--\eqref{3.6b}, we obtain
  \begin{equation}\label{3.72}
   I_{18}=\int_{0}^{\infty}\left(g^\prime f\right)_{t}V_{xt}V_{tt}{\rm d}x+\int_{0}^{\infty}g^\prime fV_{xtt}V_{tt}{\rm d}x\leq \frac{1}{16} \|V_{tt}(t)\|^{2}+C(\varepsilon +\delta)(1+t)^{-4}\|V_{xt}(t)\|^{2},
   \end{equation}
   \begin{equation}\label{3.73}
    \begin{split}
    I_{19}&=\int_{0}^{\infty}\left(gf^\prime\right)_{t}V_{xx}V_{tt}{\rm d}x+\int_{0}^{\infty}gf^\prime V_{xxt}V_{tt}{\rm d}x\\
    &\leq \frac{1}{32} \|V_{tt}(t)\|^{2}+C(1+t)^{-6}\|V_{xx}(t)\|^{2}-\frac{1}{2}\frac{{\rm d}}{{\rm d}t}\int_{0}^{\infty}gf^\prime V_{xt}^{2}{\rm d}x+\frac{1}{2}\int_{0}^{\infty}\left(gf^\prime\right)_{t}V_{xt}^{2}{\rm d}x\\
    &~~~~-\int_{0}^{\infty}\left(gf^\prime\right)_{x}V_{xt}V_{tt}{\rm d}x\\
    &\leq \frac{1}{16} \|V_{tt}(t)\|^{2}+C(1+t)^{-6}\|V_{xx}(t)\|^{2}+C(\varepsilon +\delta)(1+t)^{-3}\|V_{xt}(t)\|^{2}-\frac{1}{2}\frac{{\rm d}}{{\rm d}t}\int_{0}^{\infty}gf^\prime V_{xt}^{2}{\rm d}x,
      \end{split}
    \end{equation}
  and
  \begin{equation}\label{3.74}
  \begin{split}
  I_{20}+I_{21} &\leq C(1+t)^{-1}\int_{0}^{\infty}(|\bar{v}_{xxt}|+|\bar{v}_{xx}||\bar{v}_{t}|+|\bar{v}_{x}||\bar{v}_{xt}|+|\bar{v}_{x}|^{2}|\bar{v}_{t}|+|\hat{v}_{tt}|)|V_{tt}|{\rm d}x\\
  &~~~~+C(1+t)^{-2}\int_{0}^{\infty}(|\bar{v}_{xx}|+|\bar{v}_{x}|^{2}+|\hat{v}_{t}|)|V_{tt}|{\rm d}x+C(1+t)^{-2}\int_{0}^{\infty}(|\bar{v}_{xt}|+|\hat{v}_{xt}|)|V_{tt}|{\rm d}x\\
  &~~~~+C(1+t)^{-3}\int_{0}^{\infty}(|\bar{v}_{x}|+|\hat{v}_{x}|)|V_{tt}|{\rm d}x\\
  &\leq \frac{1}{16} \|V_{tt}(t)\|^{2}+C\delta(1+t)^{-\frac{13}{2}}.
  \end{split}
  \end{equation}
Substituting \eqref{3.69}--\eqref{3.74} into \eqref{3.68}, we obtain
\begin{align}\label{3.75}
    &\frac{1}{2}\frac{{\rm d}}{{\rm d}t}\int_{0}^{\infty}\left[V_{tt}^{2}+\left(gf^\prime-p^\prime(\bar{v})\right)V_{xt}^{2}\right]{\rm d}x+\frac{3}{4}\int_{0}^{\infty}V_{tt}^{2}{\rm d}x \nonumber\\
    \leq& \frac{1}{2}\frac{{\rm d}}{{\rm d}t}\int_{0}^{\infty}\left[p^\prime(V_{x}+\bar{v}+\hat{v})-p^\prime(\bar{v})\right]V_{xt}^{2}{\rm d}x+C(\varepsilon +\delta)(1+t)^{-\frac{3}{2}}\|V_{xt}(t)\|^{2}+C\delta (1+t)^{-\frac{11}{2}}\nonumber\\
    &+C(1+t)^{-3}\|V_{xx}(t)\|^{2}+C(1+t)^{-4}\|V_{x}(t)\|^{2}.
    \end{align}
  Addition of $\lambda \cdot \eqref{3.67}$ to \eqref{3.75} $(0<\lambda \ll 1)$, we have
   \begin{align}\label{3.76}
   &\frac{1}{2}\frac{{\rm d}}{{\rm d}t}\int_{0}^{\infty} \left[V_{tt}^{2}+\lambda V_{t}^{2}+2\lambda V_{t}V_{tt}+\left(gf^\prime-p^\prime(\bar{v})\right)V_{xt}^{2}\right]{\rm d}x+\frac{1}{2}\int_{0}^{\infty}\left(V_{tt}^{2}-\lambda p^\prime(\bar{v})V_{xt}^{2}\right){\rm d} x \nonumber\\
   \leq& \frac{1}{2}\frac{{\rm d}}{{\rm d}t}\int_{0}^{\infty}\left[p^\prime(V_{x}+\bar{v}+\hat{v})-p^\prime(\bar{v})\right]V_{xt}^{2}{\rm d}x+ C\delta (1+t)^{-\frac{9}{2}}+C(1+t)^{-1}\|V_{t}(t)\|^{2}\nonumber\\
   &+C(1+t)^{-3}\|V_{x}(t)\|_{1}^{2}.
    \end{align}
  Integrating \eqref{3.76}, $(1+t)\cdot \eqref{3.76}$ and $(1+t)^{2}\cdot \eqref{3.76}$ over $[0,t]$, we obtain
\begin{equation}\label{3.77}
   (1+t)^{2}(\|V_{t}(t)\|_{1}^{2}+\|V_{tt}(t)\|^{2})+\int_{0}^{t}(1+s)^{2}(\|V_{tt}(s)\|^{2}+\|V_{xt}(s)\|^{2}){\rm d}s
   \leq  C\left(\|V_{0}\|^{2}_{3}+\|z_{0}\|_{2}^{2}+ \delta \right).
  \end{equation}
  Then the integration of $(1+t)^{3} \cdot \eqref{3.75}$ over $[0,t]$ yields
  \begin{equation}\label{3.78}
   (1+t)^{3}(\|V_{xt}(t)\|^{2}+\|V_{tt}(t)\|^{2})+\int_{0}^{t}(1+s)^{3}\|V_{tt}(s)\|^{2}{\rm d}s
   \leq  C\left(\|V_{0}\|^{2}_{3}+\|z_{0}\|_{2}^{2}+\delta \right) .
  \end{equation}
  Combining \eqref{3.77} and \eqref{3.78}, \eqref{3.60} is immediately obtained, then the proof of Lemma \ref{l3.5} is completed.
\end{proof}

\begin{lemma}\label{l3.6}
 If $N(T) \leq \varepsilon^{2}$ and $\delta$ are sufficiently small, then it holds that
 \begin{equation}\label{3.79}
 \begin{split}
   (1+t)^{4}&(\|V_{xtt}(t)\|^{2}+\|V_{xxt}(t)\|^{2})+\int_{0}^{t}\left[(1+s)^{3}\|V_{xxt}(s)\|^{2}+(1+s)^{4}\|V_{xtt}(s)\|^{2}\right]{\rm d}s \\&
   \leq  C\left(\|V_{0}\|^{2}_{3}+\|z_{0}\|_{2}^{2}+ \delta \right),
   \end{split}
  \end{equation}
  for $0 \leq t \leq T$.
\end{lemma}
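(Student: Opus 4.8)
The plan is to follow the pattern of Lemmas \ref{l3.5} and \ref{L3.3}, but carrying one more spatial derivative and producing the extra time-decay by a successive multiplication by powers of $(1+t)$. First I would apply $\partial_{x}\partial_{t}$ to $\eqref{1.16}_{1}$ and build two energy identities on $\mathbb{R}^{+}$. Taking $\int_{\mathbb{R}^{+}}\partial_{x}\partial_{t}\eqref{1.16}_{1}\times V_{xt}\,{\rm d}x$ and integrating by parts in $x$ yields an identity whose leading structure is
\[
\frac{1}{2}\frac{{\rm d}}{{\rm d}t}\int_{0}^{\infty}\bigl(V_{xt}^{2}+2V_{xt}V_{xtt}\bigr)\,{\rm d}x-\frac{3}{4}\int_{0}^{\infty}p^{\prime}(\bar v)V_{xxt}^{2}\,{\rm d}x\le C\|V_{xtt}(t)\|^{2}+(\text{lower order terms}),
\]
while $\int_{\mathbb{R}^{+}}\partial_{x}\partial_{t}\eqref{1.16}_{1}\times V_{xtt}\,{\rm d}x$ produces, after integration by parts, the good dissipative term $\int_{0}^{\infty}V_{xtt}^{2}\,{\rm d}x$ together with an energy of the form $\int_{0}^{\infty}\bigl(V_{xtt}^{2}-p^{\prime}(\bar v)V_{xxt}^{2}\bigr)\,{\rm d}x$. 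Every boundary contribution at $x=0$ cancels: from \eqref{3.5} and $\bar v_{x}|_{x=0}=0$ one gets $(p^{\prime}(\bar v)V_{x})_{xt}|_{x=0}=0$ and $V_{xxt}(0,t)=0$, while the terms generated by $F_{2}$ vanish there because $u(0,t)=0$ together with $g(0)=g^{\prime}(0)=0$ forces $g^{\prime}f$ and $gf^{\prime}$ to vanish at $x=0$ (this is the same mechanism used in \eqref{3.52}).

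Next I would estimate the forcing $\partial_{x}\partial_{t}(F_{1}+F_{2})$. The $F_{1}$ part is treated exactly as in \eqref{3.50}: its principal contribution becomes $\tfrac12\tfrac{{\rm d}}{{\rm d}t}\int_{0}^{\infty}[p^{\prime}(V_{x}+\bar v+\hat v)-p^{\prime}(\bar v)]V_{xxt}^{2}\,{\rm d}x$, which is absorbed into the energy since $\|V_{x}\|_{L^{\infty}}$ is small by \eqref{3.4}, plus remainders of the form $C\delta(1+t)^{-k}$ and negative powers of $(1+t)$ times $\|V_{x}\|^{2}$, $\|V_{xx}\|^{2}$, $\|V_{xt}\|^{2}$, $\|V_{xxx}\|^{2}$. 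For $F_{2}=g^{\prime}f(V_{xt}-p(\bar v)_{xx}+\hat v_{t})+gf^{\prime}(V_{xx}+\bar v_{x}+\hat v_{x})$, the dangerous top-order pieces of $\partial_{x}\partial_{t}F_{2}$ — those carrying $V_{xxxt}$ and $V_{xttt}$ — are integrated by parts so that the extra derivative falls on $V_{xt}$, $V_{xtt}$ or on the small coefficients, producing terms such as $\int_{0}^{\infty}(g^{\prime}f)_{x}V_{xtt}^{2}\,{\rm d}x$, $-\tfrac12\tfrac{{\rm d}}{{\rm d}t}\int_{0}^{\infty}gf^{\prime}V_{xxt}^{2}\,{\rm d}x$ and lower order residues; the first is controlled by the $g^{\prime}f$-estimates in \eqref{3.6a} and the second is absorbed into the left-hand energy, just as in \eqref{3.55} and \eqref{3.75}. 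Combining $\lambda\times(\text{first identity})$ with the second one for $0<\lambda\ll1$ then gives a differential inequality for the energy $\int_{0}^{\infty}[V_{xtt}^{2}+\lambda V_{xt}^{2}+2\lambda V_{xt}V_{xtt}+(gf^{\prime}-p^{\prime}(\bar v))V_{xxt}^{2}]\,{\rm d}x$ with dissipation comparable to $\int_{0}^{\infty}(V_{xtt}^{2}-\lambda p^{\prime}(\bar v)V_{xxt}^{2})\,{\rm d}x$, whose right-hand side consists of $C\delta(1+t)^{-k}$ and negative powers of $(1+t)$ multiplying quantities already controlled in Lemmas \ref{l3.2}, \ref{l3.3}, \ref{L3.3} and \ref{l3.5}.

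Finally I would bootstrap in the weight: integrate the differential inequality over $[0,t]$, then multiply it successively by $(1+t)$, $(1+t)^{2}$, $(1+t)^{3}$ and integrate each time, and lastly multiply the second (dissipation-carrying) identity by $(1+t)^{4}$ and integrate; at every step the newly created forcing, of the form $\int_{0}^{t}(1+s)^{m-1}\|\partial_{x}^{i}\partial_{t}^{j}V(s)\|^{2}\,{\rm d}s$ with $i+j\le2$, is finite thanks to the weighted bounds \eqref{3.7}, \eqref{3.26}, \eqref{3.40} and \eqref{3.60} (in particular $\int_{0}^{t}(1+s)^{3}\|V_{xxt}(s)\|^{2}\,{\rm d}s\le C$), which closes the estimate and gives \eqref{3.79}. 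I expect the genuine difficulty to lie in the handling of the top-order terms of $\partial_{x}\partial_{t}F_{2}$: they formally involve $V_{xxxt}$ and $V_{xttt}$, one derivative past the regularity $V\in C^{3}(0,\infty;H^{0})$ available at this stage, so the computation should be made rigorous by the same approximation procedure already used implicitly in the preceding lemmas (or by using $\eqref{1.16}_{1}$ to trade a time derivative for spatial ones), and one must verify that after each integration by parts every such term either carries a genuinely small coefficient ($g^{\prime}f$, $gf^{\prime}$, $p^{\prime}(V_{x}+\bar v+\hat v)-p^{\prime}(\bar v)$, or $\bar v_{t}$) or reduces to an already-estimated lower order quantity decaying fast enough to survive multiplication by $(1+t)^{4}$.
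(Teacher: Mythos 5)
Your plan is correct and reaches (3.79), but you reproduce the full ``two identities plus $\lambda$-combination'' template of Lemmas \ref{l3.2}--\ref{l3.5} (and \ref{l3.7}) where the paper's actual proof is leaner. You propose forming both $\int_{\mathbb{R}^{+}}\partial_{x}\partial_{t}\eqref{1.16}_{1}\times V_{xt}\,{\rm d}x$ and $\int_{\mathbb{R}^{+}}\partial_{x}\partial_{t}\eqref{1.16}_{1}\times V_{xtt}\,{\rm d}x$, adding $\lambda$ times the former to the latter to get simultaneous dissipation in $V_{xtt}$ and $V_{xxt}$, and then running the $(1+t)^{k}$ bootstrap on the combined inequality for $k=0,\dots,3$ before finishing with $(1+t)^{4}$ on the dissipation-carrying identity. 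The paper instead computes only the second identity, obtaining \eqref{3.87}
\[
\frac{1}{2}\frac{{\rm d}}{{\rm d}t}\int_{0}^{\infty}\Bigl[V_{xtt}^{2}+\bigl(gf'-p'(\bar v)\bigr)V_{xxt}^{2}\Bigr]{\rm d}x+\frac{3}{4}\int_{0}^{\infty}V_{xtt}^{2}\,{\rm d}x\le \cdots,
\]
and simply integrates $(1+t)^{k}\cdot\eqref{3.87}$ over $[0,t]$ for $k=0,\dots,4$. The reason a second identity is unnecessary here is that the quantity $\int_{0}^{t}(1+s)^{3}\|V_{xxt}(s)\|^{2}\,{\rm d}s$ that would otherwise have to come from a $V_{xxt}$-dissipation term is already provided by Lemma \ref{L3.3}, equation \eqref{3.40} --- so \eqref{3.79} restates that integral rather than re-derives it, and the new content of the lemma (the pointwise $(1+t)^{4}\|V_{xxt}\|^{2}$, $(1+t)^{4}\|V_{xtt}\|^{2}$ bounds and $\int_{0}^{t}(1+s)^{4}\|V_{xtt}\|^{2}\,{\rm d}s$) comes entirely from the $V_{xtt}$-identity plus Lemmas \ref{l3.2}--\ref{l3.5}.

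Your extra identity does no harm and your combined bootstrap closes, so the argument is sound; it is merely redundant. The estimation of $\partial_{x}\partial_{t}F_{2}$ that you describe --- integrating by parts to move the extra derivative off $V_{xxxt}$ and $V_{xttt}$, producing $\int_{0}^{\infty}(g'f)_{x}V_{xtt}^{2}\,{\rm d}x$ and $-\tfrac{1}{2}\tfrac{{\rm d}}{{\rm d}t}\int_{0}^{\infty}gf'V_{xxt}^{2}\,{\rm d}x$ with remainders controlled via \eqref{3.6a}--\eqref{3.6b} --- is exactly what the paper does in \eqref{3.84}--\eqref{3.86}, and your explicit check of the boundary cancellations at $x=0$ (using \eqref{3.5}, $\bar v_{x}|_{x=0}=0$, $u(0,t)=0$ and $g(0)=g'(0)=0$) is correct, though the paper leaves those cancellations implicit. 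Your caveat about $V_{xxxt}$ and $V_{xttt}$ exceeding the stated regularity $V\in C^{3}(0,\infty;L^{2})$, $z\in C^{2}(0,\infty;L^{2})$ is also a fair observation; the paper shares this issue throughout Lemmas \ref{L3.3}--\ref{l3.7} and resolves it in the same implicit density-argument fashion you anticipate.
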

\begin{proof}
  By calculating $\int_{\mathbb{R}^{+}}\partial_{xt}\eqref{1.16}_{1}\times V_{xtt}{\rm d}x$, we get
\begin{equation}\label{3.80}
\begin{split}
 &\frac{1}{2}\frac{{\rm d}}{{\rm d}t}\int_{0}^{\infty}(V_{xtt}^{2}-p^\prime(\bar{v})V_{xxt}^{2}){\rm d}x +\int_{0}^{\infty}V_{xtt}^{2}{\rm d}x \\
  =&-\frac{1}{2}\int_{0}^{\infty}p^\prime(\bar{v})_{t}V_{xxt}^{2}{\rm d}x+\int_{0}^{\infty}\left[F_{1xt}-\left(p^\prime(\bar{v})_{t}V_{xx}+(p^\prime(\bar{v})_{x}V_{x})_{t}\right)_{x}\right]V_{xtt}{\rm d}x+\int_{0}^{\infty}F_{2xt}V_{xtt}{\rm d}x.
  \end{split}
  \end{equation}
  We estimate the right hand side of \eqref{3.80} as follows. From Lemmas \ref{l2.1}--\ref{l2.3} and \eqref{3.4}--\eqref{3.5}, one gets
  \begin{equation}\label{3.81}
   -\frac{1}{2}\int_{0}^{\infty}p^\prime(\bar{v})_{t}V_{xxt}^{2}{\rm d}x \leq C\delta(1+t)^{-\frac{3}{2}}\|V_{xxt}(t)\|^{2},	
   \end{equation}
  and
  \begin{equation}\label{3.82}
  \begin{split}
   &\int_{0}^{\infty}\left[F_{1xt}-\left(p^\prime(\bar{v})_{t}V_{xx}+(p^\prime(\bar{v})_{x}V_{x})_{t}\right)_{x}\right]V_{xtt}{\rm d}x \\
   \leq& \frac{1}{16}\|V_{xtt}(t)\|^{2}+\frac{1}{2}\frac{{\rm d}}{{\rm d}t}\int_{0}^{\infty}\left[p^\prime(V_{x}+\bar{v}+\hat{v})-p^\prime(\bar{v})\right]V_{xxt}^{2}{\rm d}x+C\delta (1+t)^{-\frac{13}{2}}+C(1+t)^{-3}\|V_{xt}(t)\|^{2}\\&
   +C(1+t)^{-4}\|V_{xx}(t)\|^{2}+C(1+t)^{-3}\|V_{xxx}(t)\|^{2}+C(\delta+\varepsilon)(1+t)^{-\frac{3}{2}}\|V_{xxt}(t)\|^{2}\\&
   +C(1+t)^{-5}\|V_{x}(t)\|^{2}.
   \end{split}
  \end{equation}
  Now we turn to deal with the last term of the righthand side of \eqref{3.80}. Notice that
  \begin{equation}\label{3.83}
  	\begin{split}
  	\int_{0}^{\infty}F_{2xt}V_{xtt}{\rm d}x&=\int_{0}^{\infty}\left(g^\prime fV_{xt}\right)_{xt}V_{xtt}{\rm d}x+\int_{0}^{\infty}\left(gf^\prime V_{xx}\right)_{xt}V_{xtt}{\rm d}x\\
&+\int_{0}^{\infty}(g^\prime f(-p(\bar{v})_{xt}+\hat{v}_{t}))_{xt}V_{xtt}{\rm d}x+\int_{0}^{\infty}(gf^\prime(\bar{v}_{x}+\hat{v}_{x}))_{xt}V_{xtt}{\rm d}x\\
&:=I_{22}+I_{23}+I_{24}+I_{25}.	
  	\end{split}
  \end{equation}
  It is easy to get that
  \begin{equation}\label{3.84}
  	\begin{split}
  	I_{22}&=\int_{0}^{\infty}\left(g^\prime f\right)_{xt}V_{xt}V_{xtt}{\rm d}x+\int_{0}^{\infty}\left(g^\prime f\right)_{x}V_{xtt}^{2}{\rm d}x+\int_{0}^{\infty}\left(g^\prime f\right)_{t}V_{xxt}V_{xtt}{\rm d}x+\int_{0}^{\infty}g^\prime fV_{xxtt}V_{xtt}{\rm d}x\\
  	&\leq \frac{1}{32} \|V_{xtt}(t)\|^{2}+C(1+t)^{-5}\|V_{xt}(t)\|^{2}+C(\varepsilon +\delta)(1+t)^{-4}\|V_{xxt}(t)\|^{2}-\frac{1}{2}\int_{0}^{\infty}\left(g^\prime f\right)_{x}V_{xtt}^{2}{\rm d}x\\
  	&\leq \frac{1}{16} \|V_{xtt}(t)\|^{2}+C(1+t)^{-5}\|V_{xt}(t)\|^{2}+C(\varepsilon +\delta)(1+t)^{-4}\|V_{xxt}(t)\|^{2},	
  	\end{split}
  \end{equation}
  \begin{equation}\label{3.85}
  \begin{split}
  	I_{23}&=\int_{0}^{\infty}\left(gf^\prime\right)_{xt}V_{xx}V_{xtt}{\rm d}x+\int_{0}^{\infty}\left(gf^\prime\right)_{x}V_{xxt}V_{xtt}{\rm d}x-\int_{0}^{\infty}\left(gf^\prime\right)_{t}V_{xxx}V_{xtt}{\rm d}x\\
  	&~~~~+\int_{0}^{\infty}gf^\prime V_{xxxt}V_{xtt}{\rm d}x\\
  	&\leq \frac{1}{32} \|V_{xtt}(t)\|^{2}+C(1+t)^{-7}\|V_{xx}(t)\|^{2}+C(\varepsilon +\delta)(1+t)^{-5}\|V_{xxt}(t)\|^{2}\\
  	&~~~~+C(1+t)^{-6}\|V_{xxx}(t)\|^{2}-\frac{1}{2}\frac{{\rm d}}{{\rm d}t}\int_{0}^{\infty}gf^\prime V_{xxt}^{2}{\rm d}x+\frac{1}{2}\int_{0}^{\infty}\left(gf^\prime\right)_{t}V_{xxt}^{2}{\rm d}x\\
  	&\leq \frac{1}{16} \|V_{xtt}(t)\|^{2}+C(1+t)^{-7}\|V_{xx}(t)\|^{2}+C(\varepsilon +\delta)(1+t)^{-3}\|V_{xxt}(t)\|^{2}\\
  	&~~~~+C(1+t)^{-6}\|V_{xxx}(t)\|^{2}-\frac{1}{2}\frac{{\rm d}}{{\rm d}t}\int_{0}^{\infty}gf^\prime V_{xxt}^{2}{\rm d}x,
  	\end{split}	
  \end{equation}
  and
  \begin{equation}\label{3.86}
  	I_{24}+I_{25}\leq \frac{1}{16} \|V_{xtt}(t)\|^{2}+C\delta(1+t)^{-\frac{17}{2}}+C(\varepsilon +\delta)(1+t)^{-3}\|V_{xxt}(t)\|^{2}.
  \end{equation}
Substituting \eqref{3.81}--\eqref{3.86} into \eqref{3.80}, we have
\begin{equation}\label{3.87}
\begin{split}
    &\frac{1}{2}\frac{{\rm d}}{{\rm d}t}\int_{0}^{\infty}\left[V_{xtt}^{2}+\left(gf^\prime-p^\prime(\bar{v})\right)V_{xxt}^{2}\right]{\rm d}x+\frac{3}{4}\int_{0}^{\infty}V_{xtt}^{2}{\rm d}x \\
    \leq& \frac{1}{2}\frac{{\rm d}}{{\rm d}t}\int_{0}^{\infty}\left[p^\prime(V_{x}+\bar{v}+\hat{v})-p^\prime(\bar{v})\right]V_{xxt}^{2}{\rm d}x+C(1+t)^{-\frac{3}{2}}\|V_{xxt}(t)\|^{2}+C\delta (1+t)^{-\frac{13}{2}}\\&
    ~~~~+C(1+t)^{-4}\|V_{xx}(t)\|^{2}+C(1+t)^{-5}\|V_{x}(t)\|^{2}+C(1+t)^{-3}\|V_{xxx}(t)\|^{2}\\&
    ~~~~+C(1+t)^{-3}\|V_{xt}(t)\|^{2}.
    \end{split}
\end{equation}
Integrating $(1+t)^{k} \cdot \eqref{3.87}$ over $[0,t]$, $k=0,1,2,3,4,$ and using Lemmas \ref{l3.2}--\ref{l3.5}, we can immediately obtain \eqref{3.79}. We have completed the proof of this lemma.
\end{proof}	

\begin{lemma}\label{l3.7}
 If $\varepsilon+\delta \ll 1$, then
 \begin{equation}\label{3.88}
 \begin{split}
   (1+t)^{4}\|V_{tt}(t)\|^{2}&+(1+t)^{5}(\|V_{ttt}(t)\|^{2}+\|V_{xtt}(t)\|^{2})+\int_{0}^{t}(1+s)^{5}\|V_{ttt}(s)\|^{2}{\rm d}s\\
   \leq&  C\left(\|V_{0}\|^{2}_{3}+\|z_{0}\|_{2}^{2}+ \delta \right),
   \end{split}
  \end{equation}
  for $0 \leq t \leq T$.
\end{lemma}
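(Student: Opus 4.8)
The plan is to repeat, one order higher in $t$, the scheme of Lemmas \ref{l3.5}--\ref{l3.6}: differentiate $\eqref{1.16}_1$ twice in time and run energy estimates against $V_{ttt}$ and $V_{tt}$. Since $V(0,t)=0$ forces $V_t(0,t)=V_{tt}(0,t)=V_{ttt}(0,t)=0$, which together with the consequences $V_{xx}(0,t)=V_{xxt}(0,t)=0$ recorded in \eqref{3.5} kill every boundary term produced below, I would first compute, from $\int_{\mathbb{R}^+}\partial_t^2\eqref{1.16}_1\times V_{ttt}\,{\rm d}x$, the identity
\begin{equation}\notag
\frac12\frac{{\rm d}}{{\rm d}t}\int_0^\infty\!\big(V_{ttt}^2-p'(\bar v)V_{xtt}^2\big){\rm d}x+\int_0^\infty\!V_{ttt}^2\,{\rm d}x=-\frac12\int_0^\infty\!p'(\bar v)_tV_{xtt}^2\,{\rm d}x+\int_0^\infty\!\mathcal G\,V_{ttt}\,{\rm d}x+\int_0^\infty\!F_{2tt}V_{ttt}\,{\rm d}x,
\end{equation}
where $\mathcal G$ denotes $F_{1tt}$ together with the non‑principal part of $\partial_t^2\big[(p'(\bar v)V_x)_x\big]$. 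The $\mathcal G$-terms are handled exactly as in \eqref{3.50} and \eqref{3.82}, yielding a harmless $\tfrac12\frac{{\rm d}}{{\rm d}t}\int[p'(V_x+\bar v+\hat v)-p'(\bar v)]V_{xtt}^2\,{\rm d}x$, a small multiple of $\|V_{ttt}\|^2$, terms $C(1+t)^{-m}\|\partial^\beta V\|^2$ with $|\beta|\le3$, and a fast‑decaying $C\delta(1+t)^{-17/2}$. For $\int F_{2tt}V_{ttt}\,{\rm d}x$ I would apply $\partial_t^2$ to the decomposition \eqref{3.10}, split it into four pieces as in \eqref{3.71}, and estimate each with Lemma \ref{l3.1}. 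The outcome should be a differential inequality of exactly the shape of \eqref{3.87}, namely
\begin{equation}\notag
\frac12\frac{{\rm d}}{{\rm d}t}\int_0^\infty\!\big[V_{ttt}^2+(gf'-p'(\bar v))V_{xtt}^2\big]{\rm d}x+\frac34\int_0^\infty\!V_{ttt}^2\,{\rm d}x\le\frac12\frac{{\rm d}}{{\rm d}t}\int_0^\infty\!\big[p'(V_x+\bar v+\hat v)-p'(\bar v)\big]V_{xtt}^2\,{\rm d}x+\mathcal R,
\end{equation}
with $\mathcal R$ a sum of a term $C(\varepsilon+\delta)(1+t)^{-3/2}\|V_{xtt}\|^2$ and terms $C(1+t)^{-m}\|\partial^\beta V\|^2$ that are space--time integrable against the required weights via Lemmas \ref{l3.2}--\ref{l3.6}.

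Next I would compute the companion identity from $\int_{\mathbb{R}^+}\partial_t^2\eqref{1.16}_1\times V_{tt}\,{\rm d}x$, the exact analogue of \eqref{3.67}, whose energy carries the cross term $V_{tt}V_{ttt}$ and whose right side contains $\int V_{ttt}^2$. Adding $\lambda$ times this to the previous inequality, $0<\lambda\ll1$, gives a combined inequality whose energy $\int[V_{ttt}^2+\lambda V_{tt}^2+2\lambda V_{tt}V_{ttt}+(gf'-p'(\bar v))V_{xtt}^2-(p'(V_x+\bar v+\hat v)-p'(\bar v))V_{xtt}^2]\,{\rm d}x$ is positive definite and comparable to $\|V_{ttt}\|^2+\|V_{tt}\|^2+\|V_{xtt}\|^2$ (using $p'<0$, the smallness of $gf'$ and of $p'(V_x+\bar v+\hat v)-p'(\bar v)$ supplied by \eqref{3.4} and Lemmas \ref{l2.1}--\ref{l2.2}, and Cauchy--Schwarz), while its dissipation dominates $\|V_{ttt}\|^2+\|V_{xtt}\|^2$. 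Multiplying this combined inequality by $(1+t)^k$ and integrating over $[0,t]$ for $k=0,1,2,3,4$ in turn, absorbing each term $k(1+s)^{k-1}(\text{energy})$ by the bound $\int_0^t(1+s)^{k-1}\|V_{tt}\|^2\,{\rm d}s\le C$ from Lemma \ref{l3.5} (valid for $k\le4$) and dispatching the residual sources with Lemmas \ref{l3.2}--\ref{l3.6}, yields
\[
(1+t)^4\big(\|V_{tt}(t)\|^2+\|V_{ttt}(t)\|^2\big)+\int_0^t\!(1+s)^4\big(\|V_{ttt}(s)\|^2+\|V_{xtt}(s)\|^2\big){\rm d}s\le C\big(\|V_0\|_3^2+\|z_0\|_2^2+\delta\big).
\]
Finally, multiplying the first (uncombined) inequality by $(1+t)^5$ and integrating, now absorbing the term $5(1+s)^4(\text{energy})$ by the space--time bound just obtained, gives $(1+t)^5(\|V_{ttt}(t)\|^2+\|V_{xtt}(t)\|^2)+\int_0^t(1+s)^5\|V_{ttt}(s)\|^2\,{\rm d}s\le C(\|V_0\|_3^2+\|z_0\|_2^2+\delta)$; combining the last two displays gives \eqref{3.88}.

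The main obstacle is the top‑order content of $\int_0^\infty F_{2tt}V_{ttt}\,{\rm d}x$, specifically the pieces $\int g'f\,V_{xttt}V_{ttt}\,{\rm d}x$ and $\int gf'\,V_{xxtt}V_{ttt}\,{\rm d}x$, which contain fourth‑order derivatives of $V$ lying outside the natural energy space. These are tamed by one further integration by parts in $x$: the first becomes $-\tfrac12\int(g'f)_xV_{ttt}^2\,{\rm d}x$ and is absorbed into the dissipation since $|(g'f)_x|\le C(\varepsilon+\delta)(1+t)^{-3/2}$ by Lemma \ref{l3.1}; the second becomes $-\tfrac12\frac{{\rm d}}{{\rm d}t}\int gf'\,V_{xtt}^2\,{\rm d}x$ — a small, admissible modification of the energy — together with $\int(gf')_tV_{xtt}^2\,{\rm d}x$ and $-\int(gf')_xV_{xtt}V_{ttt}\,{\rm d}x$, which are controlled via Lemma \ref{l3.1} and the bounds of Lemma \ref{l3.6}. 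The analogous borderline terms produced by $\partial_x\partial_t^2$ of the Taylor remainder inside $F_{1tt}$ carry the small coefficient $p'(V_x+\bar v+\hat v)-p'(\bar v)=O(\varepsilon(1+t)^{-3/4})$ and are absorbed the same way. Beyond this, the proof is a lengthy but routine bookkeeping check that every residual source is time‑integrable once weighted — which goes through precisely because the decay rates established in Lemmas \ref{l3.2}--\ref{l3.6} are sharp enough.
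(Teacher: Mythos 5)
Your proposal follows the paper's proof of Lemma \ref{l3.7} essentially step for step: apply $\partial_t^2$ to $\eqref{1.16}_1$, test against $V_{ttt}$ and against $V_{tt}$ to get the two identities (the paper's \eqref{3.93} and \eqref{3.89}), handle the top-order pieces $\int g'fV_{xttt}V_{ttt}\,{\rm d}x$ and $\int gf'V_{xxtt}V_{ttt}\,{\rm d}x$ in $\int F_{2tt}V_{ttt}\,{\rm d}x$ by the integration-by-parts trick already used in \eqref{3.51}--\eqref{3.54}, form the $\lambda$-combination \eqref{3.98}, integrate against $(1+t)^k$ for $k=0,\dots,4$, and then boost with $(1+t)^5$ on the uncombined inequality \eqref{3.97}, which is exactly the paper's path to \eqref{3.99}--\eqref{3.100} and hence \eqref{3.88}. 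The only small imprecision is that absorbing $k(1+s)^{k-1}(\text{energy})$ also requires the $\|V_{ttt}\|^2$ and $\|V_{xtt}\|^2$ portions of the energy, which are supplied inductively by the dissipation at step $k-1$ (together with Lemma \ref{l3.6}), not solely by the $\|V_{tt}\|^2$ bound from Lemma \ref{l3.5}; this does not affect the validity of the argument.
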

\begin{proof}
Firstly, we have from $\int_{\mathbb{R}^{+}}V_{tt}\times \eqref{1.16}_{1tt}{\rm d}x$ that
\begin{equation}\label{3.89}
  \begin{split}
  \frac{1}{2}\frac{{\rm d}}{{\rm d}t}&\int_{0}^{\infty} \left(V_{tt}^{2}+2V_{tt}V_{ttt}\right){\rm d} x-\int_{0}^{\infty} p^\prime(\bar{v})V_{xtt}^{2}{\rm d} x\\
  &=\int_{0}^{\infty}V_{ttt}^{2}{\rm d}x+\int_{0}^{\infty}[F_{1tt}V_{tt}+(p^\prime(\bar{v})_{tt}V_{x}+2p^\prime(\bar{v})_{t}V_{xt})V_{xtt}]{\rm d}x+\int_{0}^{\infty}F_{2tt}V_{tt}{\rm d}x,
  \end{split}
  \end{equation}
  It is easy to obtain that
\begin{equation}\label{3.90}
 \begin{split}
 &\int_{0}^{\infty}[F_{1tt}V_{tt}+(p^\prime(\bar{v})_{tt}V_{x}+2p^\prime(\bar{v})_{t}V_{xt})V_{xtt}]{\rm d}x\\
 \leq& -\frac{p^\prime(\bar{v})}{16}\|V_{xtt}(t)\|^{2}+C(1+t)^{-5}\|V_{x}(t)\|^{2}+C(1+t)^{-3}\|V_{xt}(t)\|^{2}+C\delta(1+t)^{-\frac{13}{2}},
 \end{split}
\end{equation}
and
\begin{align}\label{3.91}
 \int_{0}^{\infty}F_{2tt}V_{tt}{\rm d}x=&\int_{0}^{\infty}\left(g^\prime fV_{xt}\right)_{tt}V_{tt}{\rm d}x+\int_{0}^{\infty}\left(gf^\prime V_{xx}\right)_{tt}V_{tt}{\rm d}x+\int_{0}^{\infty}(g^\prime f(-p(\bar{v})_{xx}+\hat{v}_{t}))_{tt}V_{tt}{\rm d}x\nonumber\\
&+\int_{0}^{\infty}(gf^\prime(\bar{v}_{x}+\hat{v}_{x}))_{tt}V_{tt}{\rm d}x\nonumber\\
\leq& -\frac{p^\prime(\bar{v})}{16}\|V_{xtt}(t)\|^{2}+C(1+t)^{-2}\|V_{tt}(t)\|^{2}+C(1+t)^{-4}\|V_{xt}(t)\|^{2}+C\|V_{ttt}(t)\|^{2}\nonumber\\
&+C(1+t)^{-6}\|V_{xx}(t)\|^{2}+C(1+t)^{-4}\|V_{xxt}(t)\|^{2}+C\delta(1+t)^{-\frac{13}{2}}.
\end{align}
Putting \eqref{3.90}--\eqref{3.91} into \eqref{3.89}, one gets
\begin{equation}\label{3.92}
  \begin{split}
  &\frac{1}{2}\frac{{\rm d}}{{\rm d}t}\int_{0}^{\infty} \left(V_{tt}^{2}+2V_{tt}V_{ttt}\right){\rm d} x-\frac{3}{4}\int_{0}^{\infty} p^\prime(\bar{v})V_{xtt}^{2}{\rm d} x\\
  \leq&C(1+t)^{-2}\|V_{tt}(t)\|^{2}+C(1+t)^{-3}\|V_{xt}(t)\|_{1}^{2}+C\|V_{ttt}(t)\|^{2}+C(1+t)^{-5}\|V_{x}(t)\|_{1}^{2}\\
&+C\delta(1+t)^{-\frac{13}{2}}.
  \end{split}
  \end{equation}
Next, by calculating $\int_{\mathbb{R}^{+}}\partial_{tt}\eqref{1.16}_{1}\times V_{ttt}{\rm d}x$, we have
\begin{equation}\label{3.93}
\begin{split}
  &\frac{1}{2}\frac{{\rm d}}{{\rm d}t}\int_{0}^{\infty}(V_{ttt}^{2}-p^\prime(\bar{v})V_{xtt}^{2}){\rm d}x+\int_{0}^{\infty}V_{ttt}^{2}{\rm d}x\\
  & = -\int_{0}^{\infty}p^\prime(\bar{v})_{t}V_{xtt}^{2}{\rm d}x+\int_{0}^{\infty}\left[F_{1tt}-\left(p^\prime(\bar{v})_{tt}V_{x}+2p^\prime(\bar{v})_{t}V_{xt}\right)_{x}\right]V_{ttt}{\rm d}x\\
  &~~~~+\int_{0}^{\infty}F_{2tt}V_{ttt}{\rm d}x .
  \end{split}
  \end{equation}
  By using Lemmas \ref{l2.1}--\ref{l2.3} and \eqref{3.4}--\eqref{3.5}, we can conclude that
  \begin{equation}\label{3.94}
    -\int_{0}^{\infty}p^\prime(\bar{v})_{t}V_{xtt}^{2}{\rm d}x \leq C\delta (1+t)^{-\frac{3}{2}}\|V_{xtt}(t)\|^{2},
  \end{equation}
  and
  \begin{equation}\label{3.95}
  \begin{split}
   &\int_{0}^{\infty}\left[F_{1tt}-\left(p^\prime(\bar{v})_{tt}V_{x}+2p^\prime(\bar{v})_{t}V_{xt}\right)_{x}\right]V_{ttt}{\rm d}x \\
   \leq& \frac{1}{16}\|V_{ttt}(t)\|^{2}+\frac{1}{2}\frac{{\rm d}}{{\rm d}t}\int_{0}^{\infty}[p^\prime(V_{x}+\bar{v}+\hat{v})-p^\prime(\bar{v})]V_{xtt}^{2}{\rm d}x+C\delta (1+t)^{-\frac{15}{2}}+C(1+t)^{-4}\|V_{xt}(t)\|^{2}\\&
   +C(1+t)^{-\frac{3}{2}}\|V_{xtt}(t)\|^{2}+C(1+t)^{-3}\|V_{xxt}(t)\|^{2}+C(1+t)^{-6}\|V_{x}(t)\|^{2}\\
   &+C(1+t)^{-5}\|V_{xx}(t)\|^{2}.
   \end{split}
  \end{equation}
  Similar calculations to \eqref{3.51}--\eqref{3.54} deduce that
  \begin{equation}\label{3.96}
 \begin{split}
    &\int_{0}^{\infty}F_{2tt}V_{ttt}{\rm d}x \\&
     \leq \frac{1}{16} \|V_{ttt}(t)\|^{2}-\frac{1}{2}\frac{{\rm d}}{{\rm d}t}\int_{0}^{\infty}gf^\prime V_{xtt}^{2}{\rm d}x+C(1+t)^{-8}\|V_{xx}(t)\|^{2}+C\delta (1+t)^{-\frac{17}{2}}\\&
    ~~~~+C(1+t)^{-5}\|V_{xxt}(t)\|^{2}+C(1+t)^{-3}\|V_{xtt}(t)\|^{2}+C(1+t)^{-6}\|V_{xt}(t)\|^{2}.
    \end{split}
\end{equation}
Substituting \eqref{3.94}--\eqref{3.96} into \eqref{3.93}, we obtain
\begin{equation}\label{3.97}
\begin{split}
   &\frac{1}{2}\frac{{\rm d}}{{\rm d}t}\int_{0}^{\infty}\left[V_{ttt}^{2}+\left(gf^\prime-p^\prime(\bar{v})\right)V_{xtt}^{2}\right]{\rm d}x+\frac{3}{4}\int_{0}^{\infty}V_{ttt}^{2}{\rm d}x \\
    \leq& \frac{1}{2}\frac{{\rm d}}{{\rm d}t}\int_{0}^{\infty}\left[p^\prime(V_{x}+\bar{v}+\hat{v})-p^\prime(\bar{v})\right]V_{xtt}^{2}{\rm d}x +C(1+t)^{-\frac{3}{2}}\|V_{xtt}(t)\|^{2}+C\delta (1+t)^{-\frac{15}{2}}\\&
    ~~~~+C(1+t)^{-5}\|V_{xx}(t)\|^{2}+C(1+t)^{-6}\|V_{x}(t)\|^{2}+C(1+t)^{-4}\|V_{xt}(t)\|^{2}\\&
    ~~~~+C(1+t)^{-3}\|V_{xxt}(t)\|^{2}.
    \end{split}
\end{equation}
 Addition of $\lambda \cdot \eqref{3.92}$ to \eqref{3.97} $(0<\lambda\ll 1)$, we have
  \begin{equation}\label{3.98}
   \begin{split}
   &\frac{1}{2}\frac{{\rm d}}{{\rm d}t}\int_{0}^{\infty} \left[V_{ttt}^{2}+\lambda V_{tt}^{2}+2\lambda V_{tt}V_{ttt}+\left(gf^\prime-p^\prime(\bar{v})\right)V_{xtt}^{2}\right]{\rm d}x+\frac{1}{2}\int_{0}^{\infty} \left(V_{ttt}^{2}-\lambda p^\prime(\bar{v})V_{xtt}^{2}\right){\rm d} x \\
   \leq& \frac{1}{2}\frac{{\rm d}}{{\rm d}t}\int_{0}^{\infty}\left[p^\prime(V_{x}+\bar{v}+\hat{v})-p^\prime(\bar{v})\right]V_{xtt}^{2}{\rm d}x+ C\delta (1+t)^{-\frac{13}{2}}+C(1+t)^{-2}\|V_{tt}(t)\|^{2}\\&
   +C(1+t)^{-\frac{3}{2}}\|V_{xtt}(t)\|^{2}+C(1+t)^{-3}\|V_{xt}(t)\|_{1}^{2}+C(1+t)^{-5}\|V_{x}(t)\|_{1}^{2}.
    \end{split}
    \end{equation}
Multiplying \eqref{3.98} by $(1+t)^{k}, k=0,1,2,3,4,$ and integrating it over $(0,t)$, one can immediately obtain
\begin{equation}\label{3.99}
 \begin{split}
   (1+t)^{4}\|V_{tt}(t)\|_{1}^{2}&+(1+t)^{4}\|V_{ttt}(t)\|^{2}+\int_{0}^{t}(1+s)^{4}(\|V_{xtt}(s)\|^{2}+\|V_{ttt}(s)\|^{2}){\rm d}s\\
   \leq&  C\left(\|V_{0}\|^{2}_{3}+\|z_{0}\|_{2}^{2}+ \delta \right) .
   \end{split}
  \end{equation}
  Integrating $(1+t)^{5} \cdot \eqref{3.97}$ over $(0,t)$ yields
  \begin{equation}\label{3.100}
   (1+t)^{5}(\|V_{xtt}(t)\|^{2}+\|V_{ttt}(t)\|^{2})+\int_{0}^{t}(1+s)^{5}\|V_{ttt}(s)\|^{2}{\rm d}s\leq  C\left(\|V_{0}\|^{2}_{3}+\|z_{0}\|_{2}^{2}+ \delta \right).
  \end{equation}
  \end{proof}
  From \eqref{3.99} and \eqref{3.100}, we can immediately get \eqref{3.88}, the proof of Lemma \ref{l3.7} is completed. By combination of Lemmas \ref{l3.2}--\ref{l3.7}, it is easy to see that {\it a priori }assumption \eqref{3.1} has been closed. Thus we have completed the proof of Proposition \ref{p1}, and obtain \eqref{1.23}--\eqref{1.25}.

\subsection{The proof of Theorem \ref{Thm2}}\label{s3.2}

In this subsection, we will devote ourselves to the proof of Theorem \ref{Thm2}. Firstly, we rewrite \eqref{1.16} to the linearized parabolic problem around $v_{+}$
\begin{equation}\label{3.101}
 \left\{\begin{array}{l}
V_{t}+p^\prime(v_{+})V_{xx}=-V_{tt}+F_{1}+F_{2}+\left[(p^\prime(v_{+})-p^\prime(\bar{v}))V_{x}\right]_{x},\\[2mm]
(V,V_{t})|_{t=0}=(V_0,z_0)(x),\\[2mm]
V|_{x=0}=0.
 \end{array}
        \right.
\end{equation}
Then we have the explicit formula of $V(x,t)$:
\begin{align}\label{3.102}
V(x,t)=&\int_{0}^{\infty}G\left(x,t;y\right) V_{0}\left(y\right) {\rm d}y-\int_{0}^{t} \int_{0}^{\infty}G(x,t-s;y) V_{ss}(y,s) {\rm d}y {\rm d}s \nonumber\\
&+\int_{0}^{t} \int_{0}^{\infty}G(x,t-s;y)\left[(p^\prime(v_{+})-p^\prime(\bar{v}))V_{y}\right]_{y}(y,s) {\rm d}y {\rm d}s\nonumber\\
&+\int_{0}^{t} \int_{0}^{\infty}G(x,t-s;y)(F_{1}+F_{2})(y,s) {\rm d}y {\rm d}s,
\end{align}
where
\begin{equation}\notag
G(x,t;y)=\frac{1}{\sqrt{-4\pi p^\prime(v_{+})t}}\left({\rm e}^{(x-y)^{2}/4p^\prime(v_{+})t}-{\rm e}^{(x+y)^{2}/4p^\prime(v_{+})t}\right).
\end{equation}
By integration by parts in $s$ as in \cite{Nishihara-Yang1999,Nishihara1997},
\begin{align}\label{3.103}
&-\int_{0}^{\frac{t}{2}} \int_{0}^{\infty}G(x,t-s;y) V_{ss}(y,\tau) {\rm d}y {\rm d}s \nonumber\\
=&-\int_{0}^{\infty}G(x,t-s;y) V_{s}(y,s) {\rm d}y\big|_{s=0}^{s=\frac{t}{2}}-\int_{0}^{\frac{t}{2}} \int_{0}^{\infty}G_{t}(x,t-s;y) V_{s}(y,s) {\rm d}y {\rm d}s \nonumber\\
=&\int_{0}^{\infty}G\left(x,t;y\right) z_{0}(y){\rm d}y-\int_{0}^{\infty}G\left(x,t/2;y\right)V_{t}\left(y,t/2\right){\rm d}y\nonumber\\
&-\int_{0}^{\frac{t}{2}} \int_{0}^{\infty}G_{t}(x,t-s;y) V_{s}(y,s) {\rm d}y {\rm d}s.
\end{align}
Therefore, \eqref{3.102} can be rewritten as
\begin{align}\label{3.104}
V(x,t)=&\int_{0}^{\infty}G\left(x,t;y\right) (V_{0}+z_{0})\left(y\right) {\rm d}y-\int_{0}^{\infty}G\left(x,t/2;y\right)V_{t}\left(y,t/2\right) {\rm d}y \nonumber\\
 &-\int_{0}^{\frac{t}{2}} \int_{0}^{\infty}G_{t}(x,t-s;y) V_{s}(y,s) {\rm d}y {\rm d}s-\int_{\frac{t}{2}}^{t} \int_{0}^{\infty}G(x,t-s;y) V_{ss}(y,s) {\rm d}y {\rm d}s \nonumber\\
 &+\int_{0}^{t} \int_{0}^{\infty}G(x,t-s;y)\left[(p^\prime(v_{+})-p^\prime(\bar{v}))V_{y}\right]_{y}(y,s) {\rm d}y {\rm d}s\nonumber\\
 &+\int_{\frac{t}{2}}^{t} \int_{0}^{\infty}G(x,t-s;y)F_{1}(y,s) {\rm d}y {\rm d}s+\int_{\frac{t}{2}}^{t} \int_{0}^{\infty}G(x,t-s;y)F_{2}(y,s) {\rm d}y {\rm d}s\nonumber\\
 &+\int_{0}^{\frac{t}{2}} \int_{0}^{\infty}G(x,t-s;y)F_{1}(y,s) {\rm d}y {\rm d}s+\int_{0}^{\frac{t}{2}} \int_{0}^{\infty}G(x,t-s;y)F_{2}(y,s) {\rm d}y {\rm d}s\nonumber\\
 :=&\sum_{i=1}^{9} J_{i}(x,t).
\end{align}
From Proposition \ref{p1} and \eqref{3.104}, we can first deduce the following lemma.
  \begin{lemma}\label{l3.8}
 Under the assumptions of Theorem \ref{Thm1}, and we assume further that $(V_{0}+z_{0})(x) \in L^{1}$, then $V(x, t)$ satisfies the following improved decay estimates
 \begin{equation}\label{3.105}
\|V(t)\| \leq C(1+t)^{-\frac{1}{4}}.
  \end{equation}
       \end{lemma}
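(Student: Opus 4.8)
The idea is to read off the decay of $\|V(t)\|$ directly from the integral representation \eqref{3.104}, estimating the nine pieces $J_i(x,t)$ in $L^2(\mathbb{R}^+)$ one at a time. The only ingredient beyond the $L^2$-- and $L^\infty$--bounds established in Section \ref{s3.1} is the hypothesis $(V_0+z_0)\in L^1$, which enters through $J_1$ and is precisely what yields the rate $(1+t)^{-\frac14}$; note that the integration by parts in $s$ carried out in \eqref{3.103} has already folded $z_0$ into $J_1$, so only the combination $V_0+z_0$ need be in $L^1$. First I would dispose of small times: \eqref{3.2} gives $\|V(t)\|\le C$, so the claim is trivial on $[0,1]$, and we may assume $t\ge 1$. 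Throughout one uses the standard estimates for the half-line Dirichlet heat kernel $G$ (built by odd reflection, diffusion constant $-p'(v_+)>0$): $\big\|\int_0^\infty\partial_x^k G(\cdot,t;y)\phi(y)\,{\rm d}y\big\|_{L^2}\le C\,t^{-k/2}\|\phi\|_{L^2}$, the same with $t^{-k/2}$ replaced by $t^{-k/2-1/4}$ and $\|\phi\|_{L^2}$ by $\|\phi\|_{L^1}$, and $\big\|\int_0^\infty G_t(\cdot,t;y)\phi(y)\,{\rm d}y\big\|_{L^2}\le C\,t^{-1}\|\phi\|_{L^2}$, together with $V_t=z$, the decay rates \eqref{1.23}--\eqref{1.24}, and Lemmas \ref{l2.1}--\ref{l2.2} for $\bar v-v_+$, $\bar u=-\frac1\alpha p(\bar v)_x$, $\hat v$, $\hat u$.

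\textbf{The terms $J_1$--$J_4$.} For $J_1$ the $L^1$ hypothesis gives $\|J_1(t)\|\le C(1+t)^{-\frac14}\|V_0+z_0\|_{L^1}$; this is the slowest term. For $J_2=-\int_0^\infty G(\cdot,t/2;y)V_t(y,t/2)\,{\rm d}y$, the $L^2$--boundedness of $G$ and $\|V_t(t/2)\|=\|z(t/2)\|\le C(1+t)^{-1}$ give $\|J_2(t)\|\le C(1+t)^{-1}$. In $J_3$ the lag satisfies $t-s\ge t/2$, so the $G_t$--estimate and $\|V_s(s)\|=\|z(s)\|\le C(1+s)^{-1}$ give $\|J_3(t)\|\le Ct^{-1}\log(1+t)$. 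In $J_4$ the lag $t-s$ ranges over $[0,t/2]$, so using only $L^2$--boundedness of $G$ and $\|V_{ss}(s)\|=\|z_t(s)\|\le C(1+s)^{-2}$ one gets $\|J_4(t)\|\le C\int_{t/2}^t(1+s)^{-2}\,{\rm d}s\le C(1+t)^{-1}$. All three decay faster than $(1+t)^{-\frac14}$.

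\textbf{The divergence-form terms $J_5$--$J_9$.} Each of these carries one spatial derivative, which I would move onto $G$ by integrating by parts in $y$; the boundary contribution at $y=0$ vanishes because $G(x,t;0)=0$, and the one at $y=\infty$ vanishes by decay. For $J_5$ this leaves $-\int_0^t\!\int_0^\infty G_y(x,t-s;y)\big((p'(v_+)-p'(\bar v))V_y\big)(y,s)\,{\rm d}y\,{\rm d}s$, and since $\|p'(v_+)-p'(\bar v)\|_{L^\infty}\le C\delta(1+s)^{-\frac12}$ and $\|V_x(s)\|\le C(1+s)^{-\frac12}$, splitting $\int_0^t=\int_0^{t/2}+\int_{t/2}^t$ gives $\|J_5(t)\|\le C\delta\,t^{-\frac12}\log(1+t)$. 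For $J_6,J_8$ I would write $F_1=\partial_x\Phi_1$ with $\Phi_1:=\frac1\alpha p(\bar v)_t-\big(p(V_x+\bar v+\hat v)-p(\bar v)-p'(\bar v)V_x\big)$; Taylor's theorem gives $|p(V_x+\bar v+\hat v)-p(\bar v)-p'(\bar v)V_x|\le C(V_x^2+|\hat v|)$, so by \eqref{3.4} and Lemmas \ref{l2.1}--\ref{l2.2}, $\|\Phi_1(s)\|_{L^2}\le C(\varepsilon+\delta)(1+s)^{-\frac54}$, whence $\|J_6(t)\|+\|J_8(t)\|\le C(\varepsilon+\delta)(1+t)^{-\frac12}$. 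For $J_7,J_9$, write $F_2=\partial_x\big(g(V_t+\bar u+\hat u)f(V_x+\bar v+\hat v)\big)$; since $g(0)=g'(0)=0$ one has $|gf|\le C|V_t+\bar u+\hat u|^2$, hence $\|(gf)(s)\|_{L^2}\le C(\varepsilon+\delta)(1+s)^{-\frac74}$ and $\|J_7(t)\|+\|J_9(t)\|\le C(\varepsilon+\delta)(1+t)^{-\frac12}$. Summing $J_1$ through $J_9$ and recalling the bound on $[0,1]$ yields \eqref{3.105}.

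\textbf{Main obstacle.} There is no deep difficulty here: once \eqref{3.104} and the estimates of Section \ref{s3.1} are in place the proof is essentially bookkeeping. The points requiring care are (i) writing $F_1$ and $F_2$ in divergence form before integrating by parts onto $G$ --- without this the low-frequency pieces $J_8,J_9$ would not decay at all, only remain bounded; (ii) isolating the term linear in $\hat v$ inside the Taylor remainder for $\Phi_1$, since $\hat v$ is only exponentially small rather than a $\delta$--perturbation; and (iii) keeping the variable-coefficient term $J_5$ --- the slowest of the ``error'' contributions --- strictly below $(1+t)^{-\frac14}$, for which the smallness of $\delta$ is essential.
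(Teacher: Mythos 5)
Your overall strategy is the paper's: estimate the nine pieces $J_1,\dots,J_9$ of \eqref{3.104} in $L^2$, with the $L^1$ hypothesis entering only through $J_1$. The treatment of $J_1$--$J_6$ and $J_8$ is essentially what the paper does (for $J_8$ you bound $\|\Phi_1(s)\|_{L^2}\le C(\varepsilon+\delta)(1+s)^{-5/4}$ via $\|V_x^2\|_{L^2}\le\|V_x\|_{L^\infty}\|V_x\|_{L^2}$ instead of the paper's extra integration by parts in $s$, which is acceptable and slightly cleaner).

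The gap is in $J_7$ and especially $J_9$: you write $F_2=\partial_x(gf)$, move the derivative onto $G$, and then use the bound $\|(gf)(s)\|_{L^2}\le C(\varepsilon+\delta)(1+s)^{-7/4}$. That inequality is false unless $u_+=0$. Indeed $u=V_t+\bar u+\hat u$ satisfies $u(y,s)\to u_+{\rm e}^{-\alpha s}$ as $y\to\infty$ while $v\to v_+$, so $(gf)(y,s)\to g(u_+{\rm e}^{-\alpha s})f(v_+)\neq 0$, and hence $gf(\cdot,s)\notin L^2(\mathbb{R}^+)$. This is precisely the obstruction the paper flags right after \eqref{3.115} (``$\hat u(x,t)$ doesn't belong to any $L^p$ space for $1\le p<\infty$''), and it is the one non-routine step of the lemma; your ``main obstacle'' paragraph points at $\hat v$ (which is harmless, since $\hat v\in L^p$ for all $p$) and misses the real issue with $\hat u$.

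The paper avoids this by never producing the bare product $gf$: it expands $F_2=g'f(V_{ys}+\bar u_y+\hat v_s)+gf'(V_{yy}+\bar v_y+\hat v_y)$ and integrates by parts only inside the $V_{ys}$ and $V_{yy}$ terms, so that the quantities landing in $L^2$ are $g'fV_s$ and $gf'V_x$ (genuinely $L^2$ since $V_s,V_x\in L^2$), while the remaining pieces are controlled in $L^1$; see \eqref{3.116}--\eqref{3.118}. Your route can be salvaged, but it needs an extra observation: since $\int_0^\infty G_y(x,t;y)\,{\rm d}y=G(x,t;\infty)-G(x,t;0)=0$, constants are annihilated, so one may replace $gf$ by $gf-(gf)(\infty,s)$ before applying Young; then $\|gf-(gf)(\infty,s)\|_{L^2}\le C(\varepsilon+\delta)(1+s)^{-7/4}$ does hold, using $\|u-u_+{\rm e}^{-\alpha s}\|_{L^2}\le C(\varepsilon+\delta)(1+s)^{-3/4}$ (note $\hat u-u_+{\rm e}^{-\alpha s}=-u_+{\rm e}^{-\alpha s}\int_x^\infty m_0$ is compactly supported). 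As written, however, the estimate of $J_9$ does not go through, and so the proof is incomplete.
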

\begin{proof}
 Notice that
\begin{equation}\label{3.106}
\|\partial_{x}^{k}\partial_{t}^{j}G(t)\|_{L^{p}(\mathbb{R})} \leq Ct^{-\frac{1}{2}(1-\frac{1}{p})-\frac{k}{2}-j},\qquad 1\leq p\leq \infty,~k,j \geq 0,
\end{equation}
where $G(x,t)=\frac{1}{\sqrt{-4\pi p^\prime(v_{+})t}}{\rm e}^{\frac{x^{2}}{4p^\prime(v_{+})t}}$.  From \eqref{2.2}--\eqref{2.4}, \eqref{3.2}--\eqref{3.3}, \eqref{3.106} and Hausdorff-Young's inequality, we can deduce that
\begin{equation}\label{3.107}
\left\|J_{1}(t)\right\|\leq \left\|G\left(t\right)\right\|\left\|(V_{0}+z_{0})\right\|_{L^{1}}\leq Ct^{-\frac{1}{4}},
\end{equation}
\begin{equation}\label{3.108}
\|J_{2}(t)\|\leq \left\|G\left(t/2\right)\right\|_{L^{1}} \left\|V_{t}\left(t/2\right)\right\|\leq Ct^{-1},
\end{equation}
\begin{align}\label{3.109}
\|J_{3}(t)\|& \leq \int_{0}^{\frac{t}{2}} \left\|G_{t}(t-s)\right\|_{L^{1}}\left\| V_{s}(s)\right\| {\rm d}s \nonumber\\
&\leq C\int_{0}^{\frac{t}{2}}(t-s)^{-1}(1+s)^{-1}{\rm d}s \leq Ct^{-1}\ln(1+t),
\end{align}
\begin{equation}\label{3.110}
\|J_{4}(t)\|\leq C\int_{\frac{t}{2}}^{t} \left\|G(t-s)\right\|_{L^{1}}\left\|V_{ss}(s)\right\| {\rm d}s \leq C\int_{\frac{t}{2}}^{t}(1+s)^{-2}{\rm d}s \leq Ct^{-1},
\end{equation}
\begin{equation}\label{3.111}
\begin{split}
\|J_{5}(t)\| &\leq C\int_{0}^{t}\left\|G_{y}(t-s)\right\|_{L^{1}}\left\|[(p^\prime(v_{+})-p^\prime(\bar{v}))V_{y}](s)\right\|{\rm d}s\\
&\leq C\int_{0}^{t}\left\|G_{y}(t-s)\right\|_{L^{1}}\left\|V_{y}(s)\right\|(1+s)^{-\frac{1}{2}}{\rm d}s\\
&\leq C\int_{0}^{t}\left\|G_{y}(t-s)\right\|_{L^{1}}(1+s)^{-1}{\rm d}s  \\
&\leq C\left(\int_{0}^{\frac{t}{2}}+\int_{\frac{t}{2}}^{t}\right)(t-s)^{-\frac{1}{2}}(1+s)^{-1}{\rm d}s\leq Ct^{-\frac{1}{2}}\ln(1+t),
\end{split}
\end{equation}
\begin{equation}\label{3.112}
\begin{split}
\|J_{6}(t)\| &\leq\int_{\frac{t}{2}}^{t}\left\|G_{y}(t-s)\right\|(\left\|V_{x}(s)\right\|^{2}+\|\bar{v}_{s}(s)\|_{L^{1}}+\|\hat{v}(s)\|_{L^{1}}){\rm d}s \\
&\leq C\int_{\frac{t}{2}}^{t}(t-s)^{-\frac{3}{4}}(1+s)^{-1}{\rm d}s\leq Ct^{-\frac{3}{4}},
\end{split}
\end{equation}
\begin{equation}\label{3.113}
\|J_{7}(t)\|\leq \int_{\frac{t}{2}}^{t}\left\|G(t-s)\right\|_{L^{1}}\left\|F_{2}(s)\right\|{\rm d}s \leq C\int_{\frac{t}{2}}^{t}(1+s)^{-\frac{9}{4}}{\rm d}s \leq Ct^{-\frac{5}{4}},
\end{equation}
\begin{equation}\notag
 \begin{split}
 J_{8}=&\int_{0}^{\frac{t}{2}}\int_{0}^{\infty}G(x,t-s;y)_{y}\{\left[p(V_{y}+\bar{v}+\hat{v})-p(\bar{v})-p^\prime(\bar{v})V_{y}\right]-p(\bar{v})_{s}\}{\rm d}y {\rm d}s \\
 =&\int_{0}^{\frac{t}{2}}\int_{0}^{\infty}G(x,t-s;y)_{y}\left[p(V_{y}+\bar{v}+\hat{v})-p(\bar{v})-p^\prime(\bar{v})V_{y}\right]{\rm d}y {\rm d}s\\
 &+\int_{0}^{\infty}\{G(x,t;y)_{y}[p(\bar{v}(y,0))-p(v_{+})]-G(x,t/2;y)_{y}[p(\bar{v}(y,t/2))-p(v_{+})]\}{\rm d}y\\
 &+\int_{0}^{\frac{t}{2}}\int_{0}^{\infty}G(x,t-s;y)_{yt}[p(\bar{v})-p(v_{+})](y,s) {\rm d}y{\rm d}s.
  \end{split}
\end{equation}
Applying Lemmas \ref{l2.1}--\ref{l2.3} and \eqref{3.2}--\eqref{3.3}, we obtain
\begin{equation}\label{3.114}
  \begin{split}
\|J_{8}(t)\| \leq&\int_{0}^{\frac{t}{2}}\left\|G_{y}(t-s)\right\|(\left\|V_{x}(s)\right\|^{2}+\|\hat{v}(s)\|_{L^{1}}){\rm d}s+\left\|G_{x}(t)\right\|\\
&+\int_{0}^{\frac{t}{2}}\left\|G_{yt}(t-s)\right\|\|(\bar{v}-v_{+})(s)\|_{L^{1}}){\rm d}s\leq Ct^{-\frac{3}{4}}.
  \end{split}
\end{equation}

We turn to deal with $J_{9}$ now, notice that
\begin{align}\label{3.115}
J_{9}&=\int_{0}^{\frac{t}{2}}\int_{0}^{\infty}G(x,t-s;y)F_{2}(y,s) {\rm d}y{\rm d} \nonumber\\
&=\int_{0}^{\frac{t}{2}}\int_{0}^{\infty} G(x,t-s;y)[g^\prime f(V_{ys}+\bar{u}_{y}+\hat{v}_{s})+gf^\prime(V_{yy}+\bar{v}_{y}+\hat{v}_{y})](y,s) {\rm d}y {\rm d}s.
\end{align}
Since $\hat{u}(x,t)$ doesn't belong to any $L^{p}$ space for $1 \leq p < \infty$, it means that $J_{8}$ is estimated quite differently from $J_{5}$. It is easy to show that
\begin{align}\label{3.116}
&\left\|\int_{0}^{\frac{t}{2}}\int_{0}^{\infty}G(x,t-s;y)(g^\prime fV_{ys})(y,s) {\rm d}y {\rm d}s\right\| \nonumber\\
\leq& \left\|\int_{0}^{\frac{t}{2}}\int_{0}^{\infty}G_{y}(x,t-s,y)(g^\prime fV_{s})(y,s) {\rm d}y {\rm d}s \right\|+\left\|\int_{0}^{\frac{t}{2}}\int_{0}^{\infty}G(x,t-s,y)[V_{s}(g^{\prime } f)_{y}](y,s) {\rm d}y {\rm d}s\right\| \nonumber\\
\leq&C\int_{0}^{\frac{t}{2}}\left(\left\|G_{y}(t-s)\right\|_{L^{1}}\left\|(g^\prime fV_{s})(s)\right\|+\left\|G(t-s)\right\|\|V_{s}(s)\|\|(g^{\prime } f)_{y}(s)\|\right){\rm d}s \nonumber\\
\leq& Ct^{-\frac{1}{2}}\int_{0}^{\frac{t}{2}}(1+s)^{-2}{\rm d}s +Ct^{-\frac{1}{4}}\int_{0}^{\frac{t}{2}}(1+s)^{-\frac{9}{4}}{\rm d}s\leq Ct^{-\frac{1}{4}},
\end{align}
and
\begin{equation}\label{3.117}
\begin{split}
&\left\|\int_{0}^{\frac{t}{2}}\int_{0}^{\infty} G(x,t-s;y)[g^\prime f(\bar{u}_{y}+\hat{v}_{s})+gf^\prime(\bar{v}_{y}+\hat{v}_{y})](y,s) {\rm d}y {\rm d}\tau\right\|\\
&~~~~~\leq \int_{0}^{\frac{t}{2}}\left\|G(t-s)\right\|\left\|[g^\prime f(\bar{u}_{y}+\hat{v}_{s})+gf^\prime(\bar{v}_{y}+\hat{v}_{y})](\tau)\right\|_{L^{1}} {\rm d}s\\
&~~~~~\leq C\int_{0}^{\frac{t}{2}}(t-s)^{-\frac{1}{4}}(1+s)^{-2}{\rm d}s \leq Ct^{-\frac{1}{4}}.
\end{split}
\end{equation}
Without any difficulty, we can similarly proof
\begin{equation}\label{3.118}
\left\|\int_{0}^{\frac{t}{2}}\int_{0}^{\infty}G(x,t-s;y)(gf^\prime V_{yy})(y,s) {\rm d}y {\rm d}s \right\| \leq Ct^{-\frac{1}{4}}.
\end{equation}
To sum up, we obtain
\begin{equation}\label{3.119}
\|V(t)\| \leq \sum_{i=1}^{9}\|J_{i}(t)\|\leq C(1+t)^{-\frac{1}{4}}.
\end{equation}
Thus, we have completed the proof of Lemma \ref{l3.8}.
\end{proof}
With the above preparations in hand, we now turn to give the following lemma.
\begin{lemma}\label{l3.9}
 Under the conditions in Lemma \ref{l3.8}, then $V(x, t)$ satisfies the following decay rates:
  \begin{align}
&\label{3.120}\|\partial_{x}^{k}\partial_{t}^{j}V(t)\| \leq C(1+t)^{-\frac{1}{4}-\frac{k}{2}-j},\quad 0\leq k+j \leq 3,~0\leq j \leq 2,\\
&\label{3.121}\|\partial_{t}^{3}V(t)\|\leq C(1+t)^{-\frac{11}{4}}.
\end{align}
       \end{lemma}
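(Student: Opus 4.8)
The plan is to upgrade the estimates in Proposition \ref{p1} (and Lemmas \ref{l3.2}--\ref{l3.7}) using the extra decay $(1+t)^{-1/4}$ for $\|V(t)\|$ just proved in Lemma \ref{l3.8}, via the time-weighted energy method. The key observation is that all the energy inequalities derived in Section \ref{s3.1} are structurally unchanged; only the a priori decay ansatz for $\|V(t)\|$ and its derivatives improves. So I would introduce a new weighted energy functional
\begin{equation}\notag
\tilde{N}(T):=\sup_{0\leq t\leq T}\left\{\sum_{k=0}^{3}(1+t)^{k+\frac12}\|\partial_x^k V(t)\|^2+\sum_{k=0}^{2}(1+t)^{k+\frac52}\|\partial_x^k z(t)\|^2+\sum_{k=0}^{1}(1+t)^{k+\frac92}\|\partial_x^k z_t(t)\|^2\right\},
\end{equation}
and by the same bootstrap argument used for $N(T)$ — but now with Lemma \ref{l3.8} supplying the base estimate $\|V(t)\|\leq C(1+t)^{-1/4}$ as the starting point of the induction on the weight — close the estimate $\tilde{N}(T)\leq C(\|V_0\|_3^2+\|z_0\|_2^2+\delta)$. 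Since $z=V_t$, the bounds on $\|\partial_x^k\partial_t^j V(t)\|$ for $0\leq k+j\leq 3$, $0\leq j\leq 2$ follow directly: they are exactly the statements about $\|\partial_x^k V\|$, $\|\partial_x^k z\|$ and $\|\partial_x^k z_t\|$ reindexed.

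Concretely I would re-run Lemmas \ref{l3.2}--\ref{l3.7} verbatim but multiply each differential inequality by $(1+t)^{k+1/2}$ instead of $(1+t)^k$. At each step the only place the improvement enters is: (i) the forcing terms involving $\bar v$ and $\hat v$ already decay fast enough (from Lemmas \ref{l2.1}--\ref{l2.2}) to absorb the extra half-power; (ii) the nonlinear terms quadratic in $(V,z,\ldots)$ are controlled by Lemma \ref{l3.1} together with the sharpened $L^\infty$ bounds
\begin{equation}\notag
\|\partial_x^k V(t)\|_{L^\infty}\leq \sqrt2\,\tilde N(T)^{1/2}(1+t)^{-\frac12-\frac k2},\qquad \|\partial_x^k z(t)\|_{L^\infty}\leq \sqrt2\,\tilde N(T)^{1/2}(1+t)^{-\frac32-\frac k2},
\end{equation}
which come from interpolating the new weighted $L^2$ bounds via Lemma \ref{l2.3}; and (iii) the lower-order terms on the right-hand side, e.g. $C(1+t)^{-3}\|V_x(t)\|_1^2$, are now $C(1+t)^{-3}\cdot(1+t)^{-3/2}\tilde N(T)$, hence integrable with room to spare. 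The induction proceeds exactly as before: the estimate for $\|V\|,\|V_x\|,\|V_t\|$ feeds the estimate for the second-order quantities, which feeds the third-order ones, and finally $\|V_{ttt}\|$; at the top level one gets $(1+t)^{5+1/2}\|V_{ttt}(t)\|^2\leq C(\cdots)$, i.e. $\|V_{ttt}(t)\|\leq C(1+t)^{-11/4}$, which is \eqref{3.121} (note $j=3$ is the one case excluded from the uniform pattern of \eqref{3.120}, just as $\|z_{tt}\|$ was excluded in Theorem \ref{Thm1}).

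The main obstacle is purely bookkeeping: I must check that in every one of the roughly twenty inequalities in Section \ref{s3.1} the extra weight $(1+t)^{1/2}$ can be carried through without destroying the sign-definite left-hand side. This requires that whenever a term like $\frac{{\rm d}}{{\rm d}t}$ of a weighted quantity is integrated by parts against the weight, the resulting $\frac{{\rm d}}{{\rm d}t}(1+t)^{k+1/2}=(k+\tfrac12)(1+t)^{k-1/2}$ term lands on a quantity already controlled at one lower level of the induction with strictly better decay — which it does, because Lemma \ref{l3.8} gave us a genuine half-power of room. The only mildly delicate point is the boundary terms: one must re-verify, exactly as in \eqref{3.5}, that $V(0,t)=V_t(0,t)=V_{xx}(0,t)=\cdots=0$ so that all integrations by parts in $x$ produce no boundary contribution; this is inherited unchanged from the structure of \eqref{1.16} and the Dirichlet condition. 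Everything else is a routine repetition of the computations already displayed, so I would state the weighted analogues of Lemmas \ref{l3.2}--\ref{l3.7} and indicate that their proofs are "the same as before with $(1+t)^k$ replaced by $(1+t)^{k+1/2}$ and Lemma \ref{l3.8} used in place of the trivial bound on $\|V\|$," rather than rewriting them in full.
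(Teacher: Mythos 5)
Your overall strategy — re-running the weighted energy inequalities of Section \ref{s3.1} with an extra half power of $t$, using Lemma \ref{l3.8} to initialise the bootstrap — is the same one the paper takes. But there is a genuine gap in the specific weight you propose, and it is not mere bookkeeping.

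Take the first step of your scheme: multiply \eqref{3.23} by $(1+t)^{1/2}$ and integrate in time. The left-hand side contains
\begin{equation}\notag
\frac{1}{2}\frac{{\rm d}}{{\rm d}t}\Bigl[(1+t)^{1/2}\int_{0}^{\infty}\bigl(V_t^2+\lambda V^2+2\lambda VV_t-p'(\bar v)V_x^2\bigr){\rm d}x\Bigr]-\frac{1}{4}(1+t)^{-1/2}\int_{0}^{\infty}\bigl(V_t^2+\lambda V^2+\cdots\bigr){\rm d}x.
\end{equation}
When the second term is moved to the right, the pieces $V_t^2$ and $V_x^2$ are absorbed by the dissipation $(1+t)^{1/2}\int(V_t^2-\lambda p'(\bar v)V_x^2)$, but the piece $\lambda V^2$ has no dissipative counterpart (there is no $\|V\|^2$ in the dissipation). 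You must estimate it using Lemma \ref{l3.8}: $\int_0^t(1+s)^{-1/2}\|V(s)\|^2\,{\rm d}s\le C\int_0^t(1+s)^{-1}\,{\rm d}s=C\ln(1+t)$. The exponent $-1$ is borderline, so you get a logarithm, and the conclusion degrades to $\|V(t)\|_1^2+\|V_t(t)\|^2\le C(1+t)^{-1/2}\ln(1+t)$ rather than the claimed $C(1+t)^{-1/2}$; this loss then propagates through every subsequent step. Your statement that ``Lemma \ref{l3.8} gave us a genuine half-power of room'' is precisely where the reasoning breaks: the half power from Lemma \ref{l3.8} is exactly used up at this borderline and leaves no room for the time integral.

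The paper circumvents this with a small but essential device you have omitted. It multiplies by $(1+t)^{\epsilon_0+1/2}$ with a fixed $0<\epsilon_0<\frac12$ (and more generally $(1+t)^{\epsilon_0+k+\frac12}$ at level $k$), so the problematic integral becomes $\int_0^t(1+s)^{\epsilon_0-1}\,{\rm d}s\le C(1+t)^{\epsilon_0}$. Since the same factor $(1+t)^{\epsilon_0}$ appears on both sides of every resulting inequality \eqref{3.122}--\eqref{3.132}, dividing through by $(1+t)^{\epsilon_0+k+\frac12}$ yields the clean rate $(1+t)^{-k-\frac12}$ with no logarithmic correction. To make your proposal correct you would need to incorporate this $\epsilon_0$-shift of the weight at every stage; without it, the argument as written gives strictly weaker (log-polluted) decay rates and does not prove \eqref{3.120}--\eqref{3.121}.
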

 \begin{proof}
  Firstly, integrating $(1+t)^{\epsilon_{0}+\frac{1}{2}}\times \eqref{3.23}$ with respect to $t$ over $(0, t)$ for any fixed $0<\epsilon_{0}<\frac{1}{2}$, one gets
\begin{align}\notag
  &\frac{1}{2}(1+t)^{\epsilon_{0}+\frac{1}{2}}\int_{0}^{\infty} \left(V_{t}^{2}+\lambda V^{2}+2\lambda VV_{t}-p^\prime(\bar{v})V_{x}^{2}\right){\rm d} x\nonumber\\
  &+\frac{1}{2}\int_{0}^{t}\int_{0}^{\infty} (1+s)^{\epsilon_{0}+\frac{1}{2}}\left(V_{t}^{2}-\lambda p^\prime(\bar{v}) V_{x}^{2} \right){\rm d} x{\rm d}\tau \nonumber\\
  \leq&C\int_{0}^{t}(1+s)^{\epsilon_{0}-\frac{1}{2}}(\|V(s)\|_{1}^{2}+\|V_{t}(s)\|^{2}){\rm d}s+C(\varepsilon +\delta)(1+t)^{\epsilon_{0}+\frac{1}{2}}\|V_{x}(t)\|^{2}\nonumber\\
  &+C(\|V_{0}\|_{2}^{2}+\|V_{1}\|_{1}^{2}+\delta).\nonumber
  \end{align}
  Combining \eqref{3.2}--\eqref{3.3} with \eqref{3.105} shows that
  \begin{align}\notag
  \int_{0}^{t}(1+s)^{\epsilon_{0}-\frac{1}{2}}(\|V(s)\|_{1}^{2}+\|V_{t}(s)\|^{2}){\rm d}s\leq C\int_{0}^{t}(1+s)^{\epsilon_{0}-1}{\rm d}s\leq C(1+t)^{\epsilon_{0}}. \nonumber
  \end{align}
  Noticing that $\varepsilon$ and $\delta$ are sufficiently small, then one can immediately obtain
  \begin{align}\label{3.122}
 (1+t)^{\epsilon_{0}+\frac{1}{2}}(\|V(t)\|_{1}^{2}+\|V_{t}(t)\|^{2})+\int_{0}^{t}(1+s)^{\epsilon_{0}+\frac{1}{2}}\left(\|V_{x}(s)\|^{2}+\|V_{t}(s)\|^{2}\right){\rm d}s \leq C(1+t)^{\epsilon_{0}}.
  \end{align}
 Next, integrating of $(1+t)^{\epsilon_{0}+\frac{3}{2}}\times \eqref{3.22}$ over $(0, t)$, we have
  \begin{align}\notag
  &\frac{1}{2}(1+t)^{\epsilon_{0}+\frac{3}{2}}\int_{0}^{\infty}\left(V_{t}^{2}-p^\prime(\bar{v})V_{x}^{2}\right){\rm d} x+\frac{7}{8}\int_{0}^{t}\int_{0}^{\infty}(1+s)^{\epsilon_{0}+\frac{3}{2}} V_{t}^{2}{\rm d} x{\rm d}s\nonumber\\
  \leq &C\int_{0}^{t}(1+s)^{\epsilon_{0}+\frac{1}{2}}(\|V_{x}(s)\|^{2}+\|V_{t}(s)\|^{2}){\rm d}s+C(\varepsilon +\delta)(1+t)^{\epsilon_{0}+\frac{3}{2}}\|V_{x}(t)\|^{2}\nonumber\\
  &+C(\|V_{0}\|_{2}^{2}+\|V_{1}\|_{1}^{2}+\delta).\nonumber
  \end{align}
  Combining the above two equations, we have
  \begin{equation}\label{3.123}
  (1+t)^{\epsilon_{0}+\frac{3}{2}}(\|V_{x}(t)\|^{2}+\|V_{t}(t)\|^{2})+\int_{0}^{t}(1+s)^{\epsilon_{0}+\frac{3}{2}}\|V_{t}(s)\|^{2}{\rm d}s\leq C(1+t)^{\epsilon_{0}}.
  \end{equation}
  Similarly, by integrating $(1+t)^{\epsilon_{0}+\frac{3}{2}}\times \eqref{3.36}$ over $(0, t)$, we can get
  \begin{align}\notag
  &\frac{1}{2}(1+t)^{\epsilon_{0}+\frac{3}{2}}\int_{0}^{\infty} \left[V_{xt}^{2}+\lambda V_{x}^{2}+2\lambda V_{xt}V_{x}+\left(-p^\prime(\bar{v})+gf^\prime\right)V_{xx}^{2}\right]{\rm d} x\nonumber\\
  &+\frac{1}{2}\int_{0}^{t}\int_{0}^{\infty}(1+s)^{\epsilon_{0}+\frac{3}{2}}\left(V_{xt}^{2}-\lambda p^\prime(\bar{v})V_{xx}^{2}\right){\rm d}x{\rm d}s \nonumber\\
   \leq& C\int_{0}^{t}(1+s)^{\epsilon_{0}+\frac{1}{2}}(\|V_{x}(s)\|_{1}^{2}+\|V_{xt}(s)\|^{2}){\rm d}s+C(\varepsilon +\delta)(1+t)^{\epsilon_{0}+\frac{3}{2}}\|V_{xx}(t)\|^{2}\nonumber\\
  &+C(\|V_{0}\|_{2}^{2}+\|V_{1}\|_{1}^{2}+\delta).\nonumber
  \end{align}
  We can obtain from \eqref{3.2}--\eqref{3.3} and \eqref{3.122} that
  \begin{equation}\notag
  \int_{0}^{t}(1+s)^{\epsilon_{0}+\frac{1}{2}}(\|V_{x}(s)\|_{1}^{2}+\|V_{xt}(s)\|^{2}){\rm d}s \leq C(1+t)^{\epsilon_{0}},
  \end{equation}
  then it follows that
\begin{equation}\label{3.124}
  (1+t)^{\epsilon_{0}+\frac{3}{2}}(\|V_{x}(t)\|_{1}^{2}+\|V_{xt}(t)\|^{2})+\int_{0}^{t}(1+s)^{\epsilon_{0}+\frac{3}{2}}(\|V_{xx}(s)\|^{2}+\|V_{xt}(s)\|^{2}){\rm d}s \leq C(1+t)^{\epsilon_{0}}.
  \end{equation}
  Integrating $(1+t)^{\epsilon_{0}+\frac{5}{2}}\times \eqref{3.35}$ over $(0, t)$, we have
  \begin{align}\notag
  &\frac{1}{2}(1+t)^{\epsilon_{0}+\frac{5}{2}}\int_{0}^{\infty}\left(V_{xt}^{2}+\left(-p^\prime(\bar{v})+gf^\prime\right)V_{xx}^{2}\right){\rm d} x+\frac{7}{8}\int_{0}^{t}\int_{0}^{\infty}(1+s)^{\epsilon_{0}+\frac{5}{2}} V_{xt}^{2}{\rm d} x{\rm d}s\nonumber\\
  \leq &C\int_{0}^{t}(1+s)^{\epsilon_{0}+\frac{3}{2}}(\|V_{xx}(s)\|^{2}+\|V_{xt}(s)\|^{2}){\rm d}s+C(\varepsilon +\delta)(1+t)^{\epsilon_{0}+\frac{5}{2}}\|V_{xx}(t)\|^{2}\nonumber\\
  &+C(\|V_{0}\|_{2}^{2}+\|V_{1}\|_{1}^{2}+\delta).\nonumber
  \end{align}
  By employing \eqref{3.124}, we can immediately obtain
\begin{equation}\label{3.125}
  (1+t)^{\epsilon_{0}+\frac{5}{2}}(\|V_{xx}(t)\|^{2}+\|V_{xt}(t)\|^{2})+\int_{0}^{t}(1+s)^{\epsilon_{0}+\frac{5}{2}}\|V_{xt}(s)\|^{2}{\rm d}s \leq C(1+t)^{\epsilon_{0}}.
  \end{equation}
 In a similar process as above, integrating $(1+t)^{\epsilon_{0}+\frac{5}{2}}\times \eqref{3.56}$ and $(1+t)^{\epsilon_{0}+\frac{7}{2}}\times \eqref{3.55}$ over $(0, t)$, we obtain
 \begin{equation}\label{3.126}
  (1+t)^{\epsilon_{0}+\frac{5}{2}}(\|V_{xx}(t)\|_{1}^{2}+\|V_{xxt}(t)\|^{2})+\int_{0}^{t}(1+s)^{\epsilon_{0}+\frac{5}{2}}(\|V_{xxx}(s)\|^{2}+\|V_{xxt}(s)\|^{2}){\rm d}s \leq C(1+t)^{\epsilon_{0}},
  \end{equation}
\begin{equation}\label{3.127}
  (1+t)^{\epsilon_{0}+\frac{7}{2}}(\|V_{xxx}(t)\|^{2}+\|V_{xxt}(t)\|^{2})+\int_{0}^{t}(1+s)^{\epsilon_{0}+\frac{7}{2}}\|V_{xxt}(s)\|^{2}{\rm d}s \leq C(1+t)^{\epsilon_{0}}.
  \end{equation}
By integrating $(1+t)^{\epsilon_{0}+\frac{5}{2}}\times \eqref{3.76}$ and $(1+t)^{\epsilon_{0}+\frac{7}{2}}\times \eqref{3.75}$ over $(0, t)$, one gets
\begin{equation}\label{3.128}
  (1+t)^{\epsilon_{0}+\frac{5}{2}}(\|V_{t}(t)\|_{1}^{2}+\|V_{tt}(t)\|^{2})+\int_{0}^{t}(1+s)^{\epsilon_{0}+\frac{5}{2}}(\|V_{xt}(s)\|^{2}+\|V_{tt}(s)\|^{2}){\rm d}s \leq C(1+t)^{\epsilon_{0}},
  \end{equation}
\begin{equation}\label{3.129}
  (1+t)^{\epsilon_{0}+\frac{7}{2}}(\|V_{xt}(t)\|^{2}+\|V_{tt}(t)\|^{2})+\int_{0}^{t}(1+s)^{\epsilon_{0}+\frac{7}{2}}\|V_{tt}(s)\|^{2}{\rm d}s \leq C(1+t)^{\epsilon_{0}}.
  \end{equation}
By integrating $(1+t)^{\epsilon_{0}+\frac{9}{2}}\times \eqref{3.87}$ over $(0, t)$, we get
\begin{equation}\label{3.130}
  (1+t)^{\epsilon_{0}+\frac{9}{2}}(\|V_{xxt}(t)\|^{2}+\|V_{xtt}(t)\|^{2})+\int_{0}^{t}(1+s)^{\epsilon_{0}+\frac{9}{2}}\|V_{xtt}(s)\|^{2}{\rm d}s \leq C(1+t)^{\epsilon_{0}}.
  \end{equation}
  By integrating $(1+t)^{\epsilon_{0}+\frac{9}{2}}\times \eqref{3.98}$ and $(1+t)^{\epsilon_{0}+\frac{11}{2}}\times \eqref{3.97}$ over $(0, t)$, we get
  \begin{equation}\label{3.131}
  (1+t)^{\epsilon_{0}+\frac{9}{2}}(\|V_{tt}(t)\|_{1}^{2}+\|V_{ttt}(t)\|^{2})+\int_{0}^{t}(1+s)^{\epsilon_{0}+\frac{9}{2}}(\|V_{xtt}(s)\|^{2}+\|V_{ttt}(s)\|^{2}){\rm d}s \leq C(1+t)^{\epsilon_{0}},
  \end{equation}
  and
\begin{equation}\label{3.132}
  (1+t)^{\epsilon_{0}+\frac{11}{2}}(\|V_{xtt}(t)\|^{2}+\|V_{ttt}(t)\|^{2})+\int_{0}^{t}(1+s)^{\epsilon_{0}+\frac{11}{2}}\|V_{ttt}(s)\|^{2}{\rm d}s \leq C(1+t)^{\epsilon_{0}}.
  \end{equation}
Hence, by combination of \eqref{3.122}--\eqref{3.132}, we can immediately obtain the desired estimates \eqref{3.120}--\eqref{3.121}. the proof of Lemma \ref{l3.9} is completed.
\end{proof}

Based on Lemma \ref{l3.9}, we can derive the following Lemma.

\begin{lemma}\label{l3.10}
In addition to the assumptions stated in Lemma \ref{l3.9}, we assume futher that $u_{+}=0$, $\int_{0}^{\infty}(V_{0}+\frac{1}{\alpha}z_{0})(x){\rm d}x=0$ and $W_{0}(x)\in L^{1}$, then the following improved decay rates are ture
  \begin{align}
  &\label{3.133}\|\partial_{x}^{k}\partial_{t}^{j}V(t)\| \leq C(1+t)^{-\frac{3}{4}-\frac{k}{2}-j},\quad 0\leq k+j \leq 3,~0\leq j \leq 2,  \\
  &\label{3.134}\|\partial_{t}^{3}V(t)\|\leq C(1+t)^{-\frac{13}{4}}.
      \end{align}
\end{lemma}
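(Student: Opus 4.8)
The plan is to mimic the two-stage proof of Lemma~\ref{l3.9}: first upgrade the basic $L^{2}$-decay of $V$ itself through the integral representation \eqref{3.104}, and then re-run the time-weighted energy cascade with every time-weight exponent raised by one. Since $\alpha=1$, the extra hypotheses read $\int_{0}^{\infty}(V_{0}+z_{0})(x)\,{\rm d}x=0$ and $W_{0}\in L^{1}$, while $u_{+}=0$ forces $\hat v\equiv\hat u\equiv 0$, so that $F_{1}=p(\bar v)_{xt}-(p(V_{x}+\bar v)-p(\bar v)-p^{\prime}(\bar v)V_{x})_{x}$ and $F_{2}=(g(V_{t}+\bar u)f(V_{x}+\bar v))_{x}$.

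\textbf{Step 1 (improved base decay $\|V(t)\|\le C(1+t)^{-3/4}$).} I would re-examine $J_{1},\dots,J_{9}$ in \eqref{3.104}. The decisive gain is in $J_{1}=\int_{0}^{\infty}G(x,t;y)(V_{0}+z_{0})(y)\,{\rm d}y$: because $G(x,t;0)=0$ for the Dirichlet kernel and $W_{0}(0)=W_{0}(\infty)=0$ (the latter being exactly the zero-average hypothesis), integration by parts in $y$ gives $J_{1}=-\int_{0}^{\infty}G_{y}(x,t;y)W_{0}(y)\,{\rm d}y$, whence $\|J_{1}(t)\|\le\|G_{y}(t)\|_{L^{2}}\|W_{0}\|_{L^{1}}\le Ct^{-3/4}$, a gain of $t^{-1/2}$ over Lemma~\ref{l3.8}. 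For $J_{2},J_{3},J_{4},J_{6},J_{7},J_{8}$ the computations in the proof of Lemma~\ref{l3.8} already produce $O(t^{-3/4})$ or faster once the sharper rates \eqref{3.120}--\eqref{3.121} are inserted (for instance $\|J_{2}(t)\|\le\|G(t/2)\|_{L^{1}}\|V_{t}(t/2)\|\le Ct^{-5/4}$). The term $J_{9}$, which in Lemma~\ref{l3.8} was only $O(t^{-1/4})$ precisely because $\hat u\notin L^{p}$, is now controllable: with $\hat u\equiv\hat v\equiv 0$ one decomposes $F_{2}$ as in \eqref{3.10}, integrates by parts in $y$ to put a derivative onto $G$, and bounds $\|G_{y}(t-s)\|_{L^{2}}\sim(t-s)^{-3/4}\le Ct^{-3/4}$ on $[0,t/2]$ against the $L^{1}$-norms of $g^{\prime}fV_{s}$, $g^{\prime}f\,p(\bar v)_{y}$, $gf^{\prime}\bar v_{y}$, etc., each of which is $O((1+s)^{-3/2})$ by Lemma~\ref{l3.1} and Lemma~\ref{l2.1}; this yields $\|J_{9}(t)\|\le Ct^{-3/4}$.

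\textbf{Step 2 (the borderline term $J_{5}$).} I expect the only real obstacle to be $J_{5}=\int_{0}^{t}\!\int_{0}^{\infty}G(x,t-s;y)[(p^{\prime}(v_{+})-p^{\prime}(\bar v))V_{y}]_{y}\,{\rm d}y\,{\rm d}s$. Integrating by parts in $y$ (the boundary term vanishes since $G(x,\cdot;0)=0$) and splitting at $s=t/2$, the part over $[t/2,t]$ is dominated by $\int_{t/2}^{t}\|G_{y}(t-s)\|_{L^{1}}\|\bar v-v_{+}\|_{L^{\infty}}\|V_{y}(s)\|\,{\rm d}s=O(t^{-3/4})$; but the part over $[0,t/2]$, estimated via $\|G_{y}(t-s)\|_{L^{2}}\|\bar v-v_{+}\|_{L^{2}}\|V_{y}(s)\|\le Ct^{-3/4}(1+s)^{-1}$, carries a logarithm, $O(t^{-3/4}\ln(1+t))$ — and one checks that, within Lemma~\ref{l3.9}'s rates, $p=2$ is forced in the interpolation, so this loss is genuinely borderline. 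It is nevertheless harmless: running the full scheme with the seed $\|V(t)\|\le C(1+t)^{-3/4}\ln(1+t)$ already yields $\|V_{x}(t)\|\le C(1+t)^{-5/4}\ln(1+t)$, and re-inserting this improves the $[0,t/2]$-estimate of $J_{5}$ to $Ct^{-3/4}\int_{0}^{t/2}(1+s)^{-3/2}\ln(1+s)\,{\rm d}s=O(t^{-3/4})$, upgrading the seed to the clean $\|V(t)\|\le C(1+t)^{-3/4}$.

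\textbf{Step 3 (the energy cascade).} Granted $\|V(t)\|\le C(1+t)^{-3/4}$, the remainder follows the proof of Lemma~\ref{l3.9} line for line: one multiplies the differential inequalities \eqref{3.23}, \eqref{3.22}, \eqref{3.36}, \eqref{3.35}, \eqref{3.56}, \eqref{3.55}, \eqref{3.76}, \eqref{3.75}, \eqref{3.87}, \eqref{3.98}, \eqref{3.97} successively by $(1+t)^{\epsilon_{0}+\frac{3}{2}},(1+t)^{\epsilon_{0}+\frac{5}{2}},\dots,(1+t)^{\epsilon_{0}+\frac{13}{2}}$ — each weight exponent one unit higher than the corresponding one in \eqref{3.122}--\eqref{3.132} — and integrates over $(0,t)$. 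Because $\hat v\equiv\hat u\equiv 0$ the forcing terms are no larger than in Lemma~\ref{l3.9} and stay integrable against the heavier weights, and the ``source'' integral $\int_{0}^{t}(1+s)^{\cdots}\|(\text{lower-order terms})(s)\|^{2}\,{\rm d}s$ at each stage is controlled by the decay rate obtained at the previous stage, so the induction closes (reproducing $\|V(t)\|^{2}\lesssim(1+t)^{-3/2}$, then $\|V_{x}(t)\|^{2}\lesssim(1+t)^{-5/2}$, and so on). This delivers \eqref{3.133} for $0\le k+j\le 3$, $0\le j\le 2$, with the endpoint \eqref{3.134} coming from the $\|V_{ttt}\|$-level estimate at weight $(1+t)^{\epsilon_{0}+\frac{13}{2}}$.
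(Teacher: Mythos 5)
Your proposal correctly identifies all three critical ingredients ($J_{1}$ gains from $W_{0}\in L^{1}$ with zero mean, $J_{9}$ requires $u_{+}=0$ because $\hat u\notin L^{p}$, and $J_{5}$ is the borderline term), and your Step 3 cascade matches the paper's. The principal difference is in how you close the borderline term $J_{5}^{1}:=\int_{0}^{t/2}\!\int G_{y}[(p'(v_{+})-p'(\bar v))V_{y}]\,{\rm d}y\,{\rm d}s$. The paper does \emph{not} accept the $\ln t$ loss at all: it defines $M(t):=\sup_{0\le s\le t,\,0\le k\le 1}(1+s)^{3/4+k/2}\|\partial_{x}^{k}V(s)\|$ and estimates $J_{5}^{1}$ from the outset using $\|V_{x}(s)\|\le M(t)(1+s)^{-5/4}$ and $\|\bar v-v_{+}\|\le C\delta(1+s)^{-1/4}$, giving $\|J_{5}^{1}\|\le C\delta M(t)\,t^{-3/4}$ with no logarithm and, crucially, an extra factor $\delta$. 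Combined with the remaining $J_{i}$ this yields $\|V(t)\|\le C(1+\delta M(t))(1+t)^{-3/4}$; feeding that into the first two steps of the cascade gives $\sum_{k=0}^{1}(1+t)^{3/2+k}\|\partial_{x}^{k}V(t)\|^{2}\le C+C\delta^{2}M^{2}(t)$, hence $M^{2}(t)\le C+C\delta^{2}M^{2}(t)$ and $M(t)\le C$ by smallness of $\delta$. Your alternative, a two-round iteration with a logarithmic intermediate seed, does appear to close — the integral $\int_{0}^{\infty}(1+s)^{-3/2}\ln(1+s)\,{\rm d}s$ converges and the energy cascade propagates the log harmlessly — but it requires rerunning the full weighted energy hierarchy once with logarithmic corrections, then a second Green's-function pass, whereas the $M(t)$ bootstrap handles everything in a single pass by exploiting the $\delta$-prefactor. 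Both routes are valid; the paper's is more economical, and it is worth noting explicitly, as the paper does, that the $\delta$-smallness (not a second iteration) is what really neutralizes $J_{5}^{1}$.
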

\begin{proof}
We firstly proof \eqref{3.133} in the case of $0\leq k\leq1$. Define
\begin{equation}\label{3.135}
M(t):= \sup \limits_{0 \leq s \leq t,~0 \leq k \leq 1}(1+s)^{\frac{3}{4}+\frac{k}{2}}\|\partial_{x}^{k}V(s)\|.
  \end{equation}
Now we ready to show $M(t)$ is bounded. One can easily deduce that
 \begin{equation}\label{3.136}
\left\|J_{1}(t)\right\|\leq \left\|G_{y}\left(t\right)\right\|\left\|W_{0}\right\|_{L^{1}} \leq t^{-\frac{3}{4}}.
\end{equation}
Notice that
\begin{equation}\label{3.137}
\begin{split}
 J_{5}=&\int_{0}^{t}\int_{0}^{\infty}G(x,t-s;y)\left[(p^\prime(v_{+})-p^\prime(\bar{v}))V_{y}\right]_{y}(y,s) {\rm d}y {\rm d}s\\
 =&\left(-\int_{0}^{\frac{t}{2}}-\int_{\frac{t}{2}}^{t}\right)\int_{0}^{\infty}G_{y}(x,t-s;y)[(p^\prime(v_{+})-p^\prime(\bar{v}))V_{y}](y,s) {\rm d}y {\rm d}s:=J_{5}^{1}+J_{5}^{2}.
 \end{split}
 \end{equation}
 By using \eqref{2.2}, \eqref{3.120} and \eqref{3.135}, we have
 \begin{equation}\label{3.138}
 \begin{split}
 \left\|J_{5}^{1}(t)\right\|&\leq \int_{0}^{\frac{t}{2}}\left\|G_{y}(t-s)\right\|\|V_{x}(s)\|\|(\bar{v}-v_{+})(s)\|{\rm d}s\\
 &\leq C\delta M(t)\int_{0}^{\frac{t}{2}}(t-s)^{-\frac{3}{4}}(1+s)^{-\frac{3}{2}}{\rm d}s\leq C\delta M(t)t^{-\frac{3}{4}},
 \end{split}
  \end{equation}
  and
  \begin{equation}\label{3.139}
    \begin{split}
   \left\|J_{5}^{2}(t)\right\|&\leq \int_{\frac{t}{2}}^{t}\left\|G_{y}(t-s)\right\|_{L^{1}}\|V_{x}(s)\|\|(\bar{v}-v_{+})(s)\|_{L^{\infty}}{\rm d}s\\
   &\leq \int_{\frac{t}{2}}^{t}(t-s)^{-\frac{1}{2}}(1+s)^{-\frac{5}{4}}{\rm d}s\leq Ct^{-\frac{3}{4}}.
     \end{split}
   \end{equation}
Furthermore, since $u_{+}=0$, then it follows that $(\hat{v},\hat{u}) \equiv (0,0),$ in view of \eqref{1.22}, $\eqref{1.6}_{2}$ and \eqref{2.2}, we get
 \begin{align}\label{3.140}
\left\|J_{9}(t)\right\|&=\left\|\int_{0}^{\frac{t}{2}}\int_{0}^{\infty} G(x,t-s;y)_{y}(gf)(y,s) {\rm d}y {\rm d}s\right\|\leq \int_{0}^{\frac{t}{2}}\left\|G_{y}(t-s)\right\|\left\|(gf)(s)\right\|_{L^{1}}{\rm d}s\nonumber\\
& \leq C\int_{0}^{\frac{t}{2}}\left\|G_{y}(t-s)\right\|\left\|(V_{s}+\bar{u})(s)\right\|^{2}{\rm d}s \leq Ct^{-\frac{3}{4}}\int_{0}^{\frac{t}{2}}(1+s)^{-\frac{3}{2}}{\rm d}s \leq Ct^{-\frac{3}{4}}.
\end{align}
Consequently,
\begin{equation}\label{3.141}
\|V(t)\| \leq \sum_{i=1}^{9}\|J_{i}(t)\|\leq C(1+\delta M(t))(1+t)^{-\frac{3}{4}}.
\end{equation}
As introduced in the introduction, the condition $u_{+}=0$ is used only in obtaining \eqref{3.140}, the main reason is that $\hat{u}(x,t)$ doesn't belong to any $L^{p}$ space for $1 \leq p < \infty$. For damped compressible Euler equations, this condition can be removed.

With the above preparations in hand, we are going to prove $M(t)$ is bounded.  Firstly, by combintion of \eqref{3.11}--\eqref{3.14} and \eqref{3.120}, it is easy to verify that
\begin{equation}\label{3.141a}
\int_{\mathbb{R}}F_{2}V{\rm d} x\leq -\frac{p^\prime(\bar{v})}{16}\|V_{x}(t)\|^{2} +C\|V_{t}(t)\|^{2}+ C\delta (1+t)^{-\frac{5}{2}}.
\end{equation}
It follows from \eqref{3.8}--\eqref{3.9} and \eqref{3.141a} that
 \begin{align}\label{3.141b}
   \frac{1}{2}\frac{{\rm d}}{{\rm d}t}\int_{0}^{\infty} \left(V^{2}+2VV_{t}\right){\rm d} x-\frac{3}{4}\int_{0}^{\infty} p^\prime(\bar{v})V_{x}^{2}{\rm d}x \leq C\|V_{t}(t)\|^{2}+ C\delta (1+t)^{-\frac{5}{2}}.
 \end{align}
 Addition of $\lambda \cdot \eqref{3.141b}$, $0<\lambda \ll 1$ to \eqref{3.22} yields
 \begin{align}\label{3.141c}
    &\frac{1}{2}\frac{{\rm d}}{{\rm d}t}\int_{0}^{\infty} \left(V_{t}^{2}+\lambda V^{2}+2\lambda VV_{t}-p^\prime(\bar{v})V_{x}^{2}\right){\rm d} x+\frac{1}{2}\int_{0}^{\infty}(V_{t}^{2}-\lambda p^\prime(\bar{v})V_{x}^{2}){\rm d}x\nonumber\\
     \leq&\frac{{\rm d}}{{\rm d}t}\int_{0}^{\infty}\left[\int_{\bar{v}}^{V_{x}+\bar{v}+\hat{v}}p(s){\rm d}s-p(\bar{v})V_{x}-\frac{p^\prime(\bar{v})}{2}V_{x}^{2}\right]{\rm d}x+ C\delta (1+t)^{-\frac{5}{2}}.
  \end{align}
By applying the same idea as the upper lemma, integrating $(1+t)^{\epsilon_{0}+\frac{3}{2}}\times \eqref{3.141c}$ with respect to $t$ over $(0, t)$ for any fixed $0<\epsilon_{0}<\frac{1}{2}$, we have
\begin{align}\notag
  &\frac{1}{2}(1+t)^{\epsilon_{0}+\frac{3}{2}}\int_{0}^{\infty} \left(V_{t}^{2}+\lambda V^{2}+2\lambda VV_{t}-p^\prime(\bar{v})V_{x}^{2}\right){\rm d} x\nonumber\\
  &+\frac{1}{2}\int_{0}^{t}\int_{0}^{\infty} (1+s)^{\epsilon_{0}+\frac{3}{2}}\left(V_{t}^{2}-\lambda p^\prime(\bar{v}) V_{x}^{2} \right){\rm d} x{\rm d}\tau \nonumber\\
  \leq&C\int_{0}^{t}(1+s)^{\epsilon_{0}+\frac{1}{2}}(\|V(s)\|_{1}^{2}+\|V_{t}(s)\|^{2}){\rm d}s+C(\varepsilon +\delta)(1+t)^{\epsilon_{0}+\frac{3}{2}}\|V_{x}(t)\|^{2}+C(1+t)^{\epsilon_{0}}.\nonumber
  \end{align}
  We have from \eqref{3.120}--\eqref{3.121} with \eqref{3.141} that
  \begin{align}\notag
  \int_{0}^{t}&(1+s)^{\epsilon_{0}+\frac{1}{2}}(\|V(s)\|_{1}^{2}+\|V_{t}(s)\|^{2}){\rm d}s\nonumber\\
  &\leq C(1+\delta^{2}M^{2}(t))\int_{0}^{t}(1+s)^{\epsilon_{0}-1}{\rm d}s\leq C(1+\delta^{2}M^{2}(t))(1+t)^{\epsilon_{0}}. \nonumber
  \end{align}
  Noticing $\varepsilon +\delta\ll 1$, then combining the above two equations yields
  \begin{align}\label{3.142}
 (1+t)^{\epsilon_{0}+\frac{3}{2}}&(\|V(t)\|_{1}^{2}+\|V_{t}(t)\|^{2})+\int_{0}^{t}(1+s)^{\epsilon_{0}+\frac{3}{2}}\left(\|V_{x}(s)\|^{2}+\|V_{t}(s)\|^{2}\right){\rm d}s\nonumber\\
 & \leq C(1+\delta^{2}M^{2}(t))(1+t)^{\epsilon_{0}}.
  \end{align}
 Next, integrating of $(1+t)^{\epsilon_{0}+\frac{5}{2}}\times \eqref{3.22}$ over $(0, t)$, we have
  \begin{align}\notag
  &\frac{1}{2}(1+t)^{\epsilon_{0}+\frac{5}{2}}\int_{0}^{\infty}\left(V_{t}^{2}-p^\prime(\bar{v})V_{x}^{2}\right){\rm d} x+\frac{3}{4}\int_{0}^{t}\int_{0}^{\infty}(1+s)^{\epsilon_{0}+\frac{5}{2}} V_{t}^{2}{\rm d} x{\rm d}s\nonumber\\
  \leq &C\int_{0}^{t}(1+s)^{\epsilon_{0}+\frac{3}{2}}(\|V_{x}(s)\|^{2}+\|V_{t}(s)\|^{2}){\rm d}s+C(\varepsilon +\delta)(1+t)^{\epsilon_{0}+\frac{5}{2}}\|V_{x}(t)\|^{2}\nonumber\\
  &+C(1+t)^{\epsilon_{0}}.\nonumber
  \end{align}
  Combining the above two equations, we have
  \begin{equation}\label{3.143}
  (1+t)^{\epsilon_{0}+\frac{5}{2}}(\|V_{x}(t)\|^{2}+\|V_{t}(t)\|^{2})+\int_{0}^{t}(1+s)^{\epsilon_{0}+\frac{5}{2}}\|V_{t}(s)\|^{2}{\rm d}s\leq C(1+\delta^{2}M^{2}(t))(1+t)^{\epsilon_{0}}.
  \end{equation}
It follows from \eqref{3.142}--\eqref{3.143} that
\begin{equation}\notag
\sum \limits_{k=0}^{1}(1+t)^{\frac{3}{2}+k}\|\partial_{x}^{k}V(t)\|^{2} \leq C+C\delta^{2}M^{2}(t).
\end{equation}
Thus, we can immediately get
\begin{equation}\notag
M^{2}(t)\leq C+C\delta^{2}M^{2}(t).
\end{equation}
Since $\delta$ is sufficiently small, one gets
\begin{equation}\notag
M(t)\leq C,
\end{equation}
which implies
\begin{equation}\label{3.144}
\|\partial_{x}^{k}V(t)\| \leq C(1+t)^{-\frac{3}{4}-\frac{k}{2}},\quad 0\leq k\leq 1.
\end{equation}
Once we have obtained \eqref{3.144}, in the similar way in Lemma \ref{l3.9}, we can obtain
\begin{align}\label{3.145}
\sum \limits_{0\leq k+j\leq 3,0\leq j\leq 2}&(1+t)^{\epsilon_{0}+\frac{3}{2}+k+2j}\|\partial_{x}^{k}\partial_{t}^{j}V(t)\|^{2}+(1+t)^{\epsilon_{0}+\frac{13}{2}}\|\partial_{t}^{3}V(t)\|^{2} \leq C(1+t)^{\epsilon_{0}}.
\end{align}
Then one can immediately obtain \eqref{3.133}--\eqref{3.134}. This completes the proof of Lemma \ref{l3.10}.
\end{proof}
It is easy to see that Theorem \ref{Thm2} follows from Lemma \ref{l3.9}--Lemma \ref{l3.10}.

\vspace{6mm}

 \noindent {\bf Acknowledgements:} The research was supported by the National Natural Science Foundation of China $\#$12171160, 11771150, 11831003 and Guangdong Basic and Applied Basic Research Foundation $\#$2020B1515310015.

\bigbreak

%
%
%
{\small

\bibliographystyle{plain}

}

\end{document}